\newcommand{\mf}[1]{\mathfrak{#1}}
\newcommand{\mc}[1]{\mathcal{#1}}
\newcommand{\ms}[1]{\mathsf{#1}}
\newcommand{\mr}[1]{\mathrm{#1}}
\newcommand{\ov}{\overline}
\newcommand{\lr}[2]{\langle #1,#2\rangle}
\newcommand{\ep}{\varepsilon}
\newcommand{\ze}{\mathbf{0}} 
\newcommand{\un}{\mathbf{1}} 
\newcommand{\N}{\mathbb{N}}
\newcommand{\K}{\mathbb{K}}
\newcommand{\R}{\mathbb{R}}
\newcommand{\C}{\mathbb{C}}
\newcommand{\Z}{\mathbb{Z}}
\newtheorem{theorem}{Theorem}
\newtheorem{definition}[theorem]{Definition}
\newtheorem{lemma}[theorem]{Lemma}
\newtheorem{proposition}[theorem]{Proposition}
\newtheorem{corollary}[theorem]{Corollary}
\newtheorem{notation}[theorem]{Notation}
\newtheorem{convention}[theorem]{Convention}
\theoremstyle{definition}
\newtheorem{casi notevoli}[theorem]{Casi notevoli}
\newtheorem{introduction}
[theorem]{Introduction}
\newtheorem{remark}
[theorem]
{Remark}
\numberwithin{subsection}{section}
\numberwithin{equation}{section}
\numberwithin{theorem}{section}
\begin{document}
\title[Integrable Locally Convex Space Valued Tensor Fields]
{Scalarly Essentially Integrable Locally Convex Vector Valued Tensor Fields.\\
Stokes Theorem}
\author[B. Silvestri]{Benedetto Silvestri}
\date{\today}
\keywords{integrable locally convex vector valued tensor fields on manifolds,
integration of locally convex vector valued forms on manifolds, Stokes equalities}
\subjclass[2010]{46G10, 58C35}
\begin{abstract}
This note is propaedeutic to the forthcoming work \cite{sil};
here we develop the terminology and results required by that paper.
More specifically we introduce the concept of scalarly essentially integrable locally convex vector-valued 
tensor fields on a smooth manifold, generalize on them the usual operations, 
in case the manifold is oriented define the weak integral of scalarly essentially integrable locally convex
vector-valued maximal forms and finally establish the extension of Stokes theorem
for smooth locally convex vector-valued forms.
This approach to the basic theory of scalarly essentially integrable and smooth
locally convex vector-valued tensor fields seems to us to be new.
Specifically are new
(1) the definition of the space of scalarly essentially integrable locally convex vector-valued tensor fields
as a $\mathcal{A}(U)$-tensor product, although motivated by a result in the usual smooth and real-valued context;
(2) the procedure of $\mathcal{A}(U)$-linearizing $\mathcal{A}(U)$-bilinear maps in order to extend the usual 
operations especially the wedge product;
(3) the exploitation of the uniqueness decomposition of the $\mathcal{A}(U)$-tensor product with a free module
in order to define not only
(a) the exterior differential of smooth locally convex vector-valued forms, but also
(b) the weak integral of scalarly essentially integrable locally convex vector-valued maximal forms;
(4) the use of the projective topological tensor product theory to define the wedge product.
\end{abstract}
\maketitle
\begin{notation}
If $A$ is a ring, then let $A-\mr{mod}$ be the category of $A$-modules and $A$-linear maps.
If $E$ is a $A$-module, then let $E^{\ast}$ be its $A$-dual.
Let $r,s\in\Z_{+}$ and $E$ be a $A-$module, define $[E,r,s]$ to be such that 
$[E,0,0]\coloneqq A^{\ast}$, otherwise be the map on $[1,r+s]$ such that 
\begin{equation*}
\begin{aligned}
i\in[1,r]\cap\Z\Rightarrow[E,r,s]_{i}&\coloneqq E^{\ast},
\\
j\in[1,s]\cap\Z\Rightarrow[E,r,s]_{r+j}&\coloneqq E.
\end{aligned}
\end{equation*}
Let $\prod[E,0,0]\coloneqq[E,0,0]$ and and let $\prod[E,r,s]$ be the $A$-module product $\prod_{i=1}^{r+s}[E,r,s]_{i}$.
If $F$ is a $A$-module, then define $\mf{T}_{s}^{r}(E,F)$ be the $A$-module of $A$-multilinear maps from $\prod[E,r,s]$
into $F$ whose elements are called tensors on $E$ of type $(r,s)$ at values in $F$.
Set $\mf{T}_{s}^{r}(E)\coloneqq\mf{T}_{s}^{r}(E,A)$ and identify $\mf{T}_{0}^{0}(E)$ with $A$.
Let $\mr{Alt}^{k}(E)$ be the $A$-submodule of the alternating maps in $\mf{T}_{k}^{0}(E)$.
\par
Let $K\in\{\R,\C\}$ and let $G$ be a Hausdorff locally convex space over $\K$.
We let $G_{0}$ denote the linear space over $\R$ underlying $G$, while let 
$G_{\R}$ denote the Hausdorff locally convex space over $\R$ underlying $G$.
Let $\mc{L}(G,H)$ be the $\K$-linear space of continuous linear maps from $G$ into $H$ 
and $G^{\prime}\coloneqq\mc{L}(G,\K)$ be the topological dual of $G$, so
$(G^{\prime})^{\ast}=\mr{Mor}_{\K-\mr{mod}}(G^{\prime},\K)$ is the algebraic dual of $G^{\prime}$.
Next if $W$ is an open set of $\R^{n}$ with $n\in\N^{\ast}$ and $k\in\Z_{+}\cup\{+\infty\}$, 
then we let $\mc{C}^{k}(W,G)$ be the $\K$-linear space of $\mc{C}^{k}$-maps in the sense of Bastiani.
For every $(a,v)\in W\times\R^{n}$  we let $D_{v}^{W,G}\vert_{a}f$ denote
the derivative of $f$ at $a$ in the direction $v$, and let $D_{v}^{W,G}f:W\ni a\mapsto D_{v}^{W,G}\vert_{a}f\in G$. 
\par
If $X$ is a topological space and $E$ is a Hausdorff locally convex space over $\R$,
then we let $\mc{H}(X,E)$ be the $\R$-linear space of compactly supported continous maps defined on $X$
and with values in $E$ provided with the usual locally convex topology, we let $\mc{H}(X)\coloneqq\mc{H}(X,\R)$.
Let $\mr{Meas}(X,E)$ be the $\R$-linear space of vectorial measures on $X$ with values in $E$, namely the space of
$\R$-linear and continuous maps from $\mc{H}(X)$ into $E$ \cite[VI.18 Def. 1]{IntBourb}.
Let $\mr{Meas}(X)$ denote $\mr{Meas}(X,\R)$ whose elements are called measures on $X$
\cite[Def. 2, $\S1$, $n^{\circ}3$, Ch. $3$]{IntBourb}.
A map $g:X\to\C$ is scalarly essentially $\mu$-integrable or simply essentially $\mu$-integrable  iff
$\mf{R}\circ\imath_{\C}^{\C_{\R}}\circ g$ and $\mf{I}\circ\imath_{\C}^{\C_{\R}}\circ g$ are essentially $\mu$-integrable
where $\mf{R}\in\mc{L}(\C_{R},\R)$ and $\mf{I}\in\mc{L}(\C_{R},\R)$ are the real and imaginary part respectively.
Given a Hausdorff locally convex space $G$ over $\K\in\{\R,\C\}$ and a map $f:X\to G$, we say that $f$ is
scalarly essentially $\mu$-integrable iff $\uppsi\circ f$ is essentially $\mu$-integrable for every
$\uppsi\in G^{\prime}$. Moreover we say that the integral of $f$ belongs to $G$ iff there exists a necessarily unique
$s\in G$ such that $\uppsi(s)=\int\uppsi\circ f$ for every $\uppsi\in G^{\prime}$ in which case we set $\int f\coloneqq s$.
\par
Let $M$ be a smooth manifold with or without boundary, $N=\mr{dim}\,M$ and $U$ be an open set of $M$. 
A chart and an atlas of $M$ are understood smooth. 
Let $\mc{A}(M)$ be the unital algebra of real valued smooth maps on $M$ and let $\un_{M}$ denote its unit.
Let $\mc{A}_{c}(M,\R)$ be the subalgebra of those $f\in\mc{A}(M)$ whose support is compact, while let 
$\mc{A}_{c}(M)$ denote the unital subalgebra $\mc{A}_{c}(M,\R)\cup\{\un_{M}\}$.
If $G$ is a Hausdorff locally convex space over $\K\in\{\R,\C\}$, then let $\mc{A}(M,G)$ be the set of maps
$f:M\to G$ such that
$f\circ\imath_{U}^{M}\circ\phi^{-1}\in\mc{C}^{\infty}(\phi(U),G)$, for every chart $(U,\phi)$ of $M$.
A standard argument proves that $f\in\mc{A}(M,G)$ is equivalent to state that for every $x\in M$ there exists
a chart $(V,\beta)$ such that $V\ni x$ and $f\circ\imath_{V}^{M}\circ\beta^{-1}\in\mc{C}^{\infty}(\beta(V),G)$.
As a result the usual gluing lemma via a covering of charts extends to $\mc{A}(M,G)$.
Let $\mc{A}_{c}(U,G)$ be the subset of those maps in $\mc{A}(U,G)$ with compact support,
$\mc{A}(U,G)$ and $\mc{A}_{c}(U,G)$ are clearly $\mc{A}(U)$-modules. 
If $N\neq 0$, then for every chart $(U,\phi)$ of $M$ and $i\in[1,N]\cap\Z$,
let $\partial_{i}^{\phi,G}:\mc{A}(U,G)\to\mc{A}(U,G)$ be defined as in the case $G=\R$ with the exception of 
replacing the operator $D_{e_{i}}$ with $D_{e_{i}}^{\phi(U),G}$, where $\{e_{i}\}_{i=1}^{N}$ is the standard basis of $\R^{N}$.
\par
Let $TM$ and $T^{\ast}M$ be the tangent and cotangent bundle of $M$ respectively.
Let $\mc{V}$ be a smooth vector bundle over $M$, then let $\Gamma_{0}(U,\mc{V})$, $\Gamma^{0}(U,\mc{V})$
and $\Gamma(U,\mc{V})$ be the $\mc{A}(U)$-module of sections, continuous sections and smooth sections respectively
of the restriction at $U$ of $\mc{V}$. 
If $r,s\in\Z_{+}$, let
$\mf{T}_{s}^{r}(U,M)\coloneqq\mf{T}_{s}^{r}(\Gamma(U,TM))$
and let
$\mf{T}_{s}^{r}(TM)$ be the vector bundle over $M$ whose fiber at $p$ equals $\mf{T}_{s}^{r}(T_{p}M)$;
while if $k\in\Z_{+}$, then let
$\mr{Alt}^{k}(U,M)\coloneqq\mr{Alt}^{k}(\Gamma(U,M))$
and let
$\mr{Alt}^{k}(TM)$ be the vector bundle over $M$ whose fiber at $p$ equals $\mr{Alt}^{k}(T_{p}M)$.
Set
$\mf{T}_{\bullet}^{\bullet}(U,M)\coloneqq\bigoplus_{(r,s)\in\Z_{+}\times\Z_{+}}\mf{T}_{s}^{r}(U,M)$
and 
$\mf{T}_{\bullet}^{\bullet}(TM)\coloneqq\bigoplus_{(r,s)\in\Z_{+}\times\Z_{+}}\mf{T}_{s}^{r}(TM)$;
while
$\mr{Alt}^{\bullet}(U,M)\coloneqq\bigoplus_{k\in\Z_{+}}\mr{Alt}^{k}(U,M)$
and 
$\mr{Alt}^{\bullet}(TM)\coloneqq\bigoplus_{k\in\Z_{+}}\mr{Alt}^{k}(TM)$.
We set $\Omega^{k}(U,M)\coloneqq\Gamma(U,\mr{Alt}^{k}(TM))$ and
$\Omega^{\bullet}(U,M)\coloneqq\bigoplus_{k\in\Z_{+}}\Omega^{k}(U,M)$.
Clearly $\mr{Alt}^{\bullet}(U,M)$, $\mr{Alt}^{\bullet}(TM)$ and $\Omega^{\bullet}(U,M)$ equal the direct sum over
$[1,N]\cap\Z$.
\par
We shall denote by $\mf{r}_{\R}$ or simply $\mf{r}$ the usual $\mc{A}(U)$-isomorphism from
$\Gamma(U,\mf{T}_{\bullet}^{\bullet}(TM))$ onto $\mf{T}_{\bullet}^{\bullet}(U,M)$ and by $\mf{t}_{\R}$ or simply $\mf{t}$
the inverse of $\mf{r}$.
By abuse of language we let us denote with the same symbol the restriction at $\mr{Alt}^{\bullet}(U,M)$ and at its range of
$\mf{t}$ and by $\mf{r}$ its inverse.
Given a chart $(U,\phi)$ of $M$, in order to keep the notation as light as possible we
convein to let $dx_{i}^{\phi}\in\Gamma(U,T^{\ast}M)$ denote also $\mf{t}(dx_{i}^{\phi})\in\Gamma(U,TM)^{\ast}$.
Moreover we let
$\{(\otimes(b^{r,s,\phi})^{\ast})_{\alpha}\,\vert\,\alpha\in\Xi(b^{r,s,\phi})\}$
and $\{\mc{E}_{dx^{\phi}}(I)\,\vert\,I\in M(k,N,<)\}$ be the basis of $\mf{T}_{s}^{r}(U,M)$ and $\mr{Alt}^{k}(U,M)$
image via the isomorphism $\mf{r}$ of the basis of $\Gamma(U,\mf{T}_{s}^{r}(TM))$ and $\Gamma(U,\mr{Alt}^{k}(TM))$ 
associated with the chart $(U,\phi)$ respectively. 
\par
In what follows we let $K\in\{\R,\C\}$ and let $G$, $H$, $G_{1}$ and $H_{1}$ be Hausdorff locally convex spaces over $\K$,
and let $M$ be a finite dimensional smooth manifold $M$, with or without boudary, such that $N\coloneqq\mr{dim}\,M\neq 0$.
Let $\lambda$ be the Lebesgue measure on $\R^{N}$ and for every open set $A$ of $\R^{N}$ we let
$\lambda_{A}$ be the restriction at $A$ of $\lambda$.
\end{notation}
\begin{introduction}
Let us outline the main ideas underlying this note.
We opt to avoid employing the concept of manifold modelled over locally convex spaces via the Bastiani differential
calculus. Fortunately this is possible if we generalize to our context the well-known fact that 
given a finite dimensional vector bundle $\mc{Z}$ on $M$, then
$\Gamma(\mc{Z}\otimes\mr{Alt}^{\bullet}(TM))$ is $\mc{A}(M)$-isomorphic to
$\Gamma(\mc{Z})\otimes_{\mc{A}(M)}\Gamma(\mr{Alt}^{\bullet}(TM))$. 
\par
Therefore motivated by the above result, given a finite dimensional vector bundle $\mc{V}$ on $M$ and an open set $U$
of $M$, we shall define the space of $G$-valued scalarly essentially $\lambda$-integrable sections of type $\mc{V}$
defined on $U$, as the $\mc{A}(U)$-module
\begin{equation}
\label{10011652}
\mf{L}_{c}^{1}(U,G,\lambda)\otimes_{\mc{A}(U)}\Gamma(U,\mc{V});
\end{equation}
where $\mf{L}_{c}^{1}(U,G,\lambda)$ is the $\mc{A}(U)$-module of compactly supported
scalarly essentially $\lambda$-integrable maps from $U$ at values in $G$ as defined in a natural way in
Def. \ref{09281553}.
Similar definition is given for $G$-valued smooth sections of type $\mc{V}$ defined on $U$
by replacing $\mf{L}_{c}^{1}(U,G,\lambda)$ with $\mc{A}(U)$. 
\par
The advantages of employing the above definition are the following.
\par
First it is well-known that for any (possibly noncommutative ring) $A$, any $A$-module $B$ and any \emph{free}
$A$-module $C$ we have a unique decomposition of every element of the $\Z$-module $B\otimes_{A}C$ in terms of elements
of $B$ and elements of the basis of $C$. In addition when $U$ is the domain of a chart, then $\Gamma(U,\mc{V})$ 
is a free $\mc{A}(U)$-finite dimensional module.
As a result we obtain for instance Cor. \ref{10011800} and Cor. \ref{10011804}.
As a result any element of $\mc{A}(U,G)\otimes_{\mc{A}(U)}\Omega^{\bullet}(U,M)$
admits a unique decomposition which among other properties permits to define the exterior differential 
in a natural way and then to extend it in the usual manner see Def. \ref{08281540} and Thm. \ref{08281401}.
Furthermore the unique decomposition applied to any $\R$-valued scalarly essentially $\lambda$-integrable form over
an open set of $\R^{N}$, permits to define its integral Def. \ref{09140959a} that is the
first step to define the weak integral.
\par
Second all the standard operations over tensor fields can be extended to the $G$-valued setting just by
$\mc{A}(U)$-linearization of $\mc{A}(U)$-bilinears. A paradigmatic example showing this procedure 
is the wedge product in Def. \ref{09260951} provided a sequence of preliminary results, where an extra care must be
implemented since the use in the definition of the projective topological tensor product of two Hausdorff locally convex
spaces.
\par
Third by pushing forward via any continuous functional on $G$ the operation so obtained between $G$-valued sections
we obtain the usual corresponding operation between $\R$-valued sections Prp. \ref{09170918} and Prp. \ref{09290547}.
\par
Fourth and most importantly by pushing forward via any continuous linear map $\uppsi$ from $G$ into $H$
a $G$-valued scalarly essentially $\lambda$-integrable section $\eta$ of type $\mc{V}$ defined on $U$ we obtain
a $H$-valued scalarly essentially $\lambda$-integrable section $\uppsi_{\times}(\eta)$ of type $\mc{V}$ defined on $U$
Def. \ref{08281845int}.
This permits when $H=\K$ to define in Def. \ref{09281748} the weak integral of a $G$-valued smooth maximal form $\eta$
as the map associating to any continuous functional $\uppsi$ on $G$ the integral of $\uppsi_{\times}(\eta)$, 
then as a result a vectorial measure on $M$ with values in the real locally convex space
$\lr{(G^{\prime})^{\ast}}{\sigma((G^{\prime})^{\ast},G^{\prime})}_{\R}$ is constructed in Thm. \ref{09171005}.
Finally the Stokes theorem Thm. \ref{08281926} for a $G$-valued smooth $(N-1)$-form $\theta$
results as a consequence of the usual Stokes theorem applied to
$\uppsi_{\times}(\theta)$ for every $\uppsi$ in the topological dual of $G$.
\end{introduction}
\section{$G$-Valued Integrable and Smooth Tensor Fields}
\label{09172012}
\begin{definition}
[\textbf{$G$-Valued Scalarly Essentially Integrable Maps on $M$}]
\label{09281553}
Define $\mf{L}^{1}(M,G,\lambda)$ to be the set of maps $f:M\to G$ such that $f\circ\imath_{U}^{M}\circ\phi^{-1}$ is
scalarly essentially $\lambda_{\phi(U)}$-integrable, for every chart $(U,\phi)$ of $M$.
Let $\mf{L}_{c}^{1}(M,G,\lambda)$ be the subset of the maps in $\mf{L}^{1}(M,G,\lambda)$ with compact support.
\end{definition}
\begin{remark}
\label{09251444int}
The theorem of change of variable in multiple integrals along with a standard argument
prove that $f\in\mf{L}^{1}(M,G,\lambda)$ is equivalent to state that for every $x\in M$ there exists
a chart $(V,\beta)$ such that $V\ni x$ and $f\circ\imath_{V}^{M}\circ\beta^{-1}$ is
scalarly essentially $\lambda_{\beta(V)}$-integrable.
As a result the usual gluing lemma via a covering of charts extends to $\mf{L}^{1}(M,G,\lambda)$.
\end{remark}
Recall that $\mc{A}_{c}(M)$ is by definition the unital
subalgebra of $\mc{A}(M)$ generated by the unit $\un_{M}$ and by the subalgebra $\mc{A}_{c}(M,\R)$ of the maps in
$\mc{A}(M)$ with compact support. Thus $\mc{A}_{c}(M)=\mc{A}_{c}(M,\R)\cup\{\un_{M}\}$.
\begin{lemma}
$\mf{L}^{1}(M,G,\lambda)$ is a $\mc{A}_{c}(M)$-module and $\mf{L}_{c}^{1}(M,G,\lambda)$ is a $\mc{A}(M)$-module.
\end{lemma}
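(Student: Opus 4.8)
The plan is to handle the two assertions separately; in each case the module axioms are pointwise identities and hence automatic, so the only content is closure of the relevant set under addition, under multiplication by scalars in $\K$, and under multiplication by the elements of the ring, the sole non-formal ingredient being the behaviour of essential integrability under multiplication by a smooth function.

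For the additive and $\K$-linear structure I would argue as follows. Essentially $\mu$-integrable scalar functions form a $\K$-vector space, and this property passes to real and imaginary parts, so by linearity of every $\uppsi\in G^{\prime}$ together with the pointwise identities $\uppsi\circ(f+g)=\uppsi\circ f+\uppsi\circ g$ and $\uppsi\circ(cf)=c(\uppsi\circ f)$ and the chartwise defining condition of Def.\ \ref{09281553}, the set $\mf{L}^{1}(M,G,\lambda)$ is a $\K$-subspace of the space of all maps $M\to G$; for $\mf{L}_{c}^{1}(M,G,\lambda)$ one moreover uses $\mr{supp}(f+g)\subseteq\mr{supp}\,f\cup\mr{supp}\,g$ and $\mr{supp}(cf)\subseteq\mr{supp}\,f$ to see that it is a $\K$-subspace as well.

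The heart of the first assertion is the stability of $\mf{L}^{1}(M,G,\lambda)$ under multiplication by $h\in\mc{A}_{c}(M)$. The key observation is that every element of $\mc{A}_{c}(M)$ is a \emph{bounded} function on $M$: indeed $\un_{M}$ is constant and each element of $\mc{A}_{c}(M,\R)$ is continuous with compact support, so $h$ admits a finite bound $C\coloneqq\sup_{M}|h|$. Now fix a chart $(U,\phi)$ and $\uppsi\in G^{\prime}$; then
\[
\uppsi\circ\big((hf)\circ\imath_{U}^{M}\circ\phi^{-1}\big)=\big(h\circ\imath_{U}^{M}\circ\phi^{-1}\big)\cdot\big(\uppsi\circ(f\circ\imath_{U}^{M}\circ\phi^{-1})\big),
\]
where the first factor is continuous, hence $\lambda_{\phi(U)}$-measurable, and bounded by $C$, while the second factor is essentially $\lambda_{\phi(U)}$-integrable because $f\in\mf{L}^{1}(M,G,\lambda)$. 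Multiplication of an essentially $\mu$-integrable function by a bounded $\mu$-measurable function yields again an essentially $\mu$-integrable function (measurability is preserved, and the essential $L^{1}$-seminorm is multiplied by at most $C$); since $h$ is real-valued this applies in the case $\K=\C$ to the real and imaginary parts of $\uppsi\circ(f\circ\imath_{U}^{M}\circ\phi^{-1})$ as well. As $(U,\phi)$ and $\uppsi$ were arbitrary, $hf\in\mf{L}^{1}(M,G,\lambda)$, so $\mf{L}^{1}(M,G,\lambda)$ is an $\mc{A}_{c}(M)$-module.

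For the second assertion it then remains only to check that $\mf{L}_{c}^{1}(M,G,\lambda)$ is stable under multiplication by an arbitrary $h\in\mc{A}(M)$. Given $f\in\mf{L}_{c}^{1}(M,G,\lambda)$, put $S\coloneqq\mr{supp}\,f$, a compact subset of $M$, and pick, using the existence of smooth bump functions on $M$, some $\chi\in\mc{A}_{c}(M,\R)$ with $\chi\equiv\un_{M}$ on a neighbourhood of $S$; then $h\chi\in\mc{A}_{c}(M,\R)\subseteq\mc{A}_{c}(M)$ and $hf=(h\chi)f$. By the first assertion $(h\chi)f\in\mf{L}^{1}(M,G,\lambda)$, while $\mr{supp}(hf)\subseteq S$ is compact, hence $hf\in\mf{L}_{c}^{1}(M,G,\lambda)$; thus $\mf{L}_{c}^{1}(M,G,\lambda)$ is an $\mc{A}(M)$-module. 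I expect the multiplication step to be the only genuine obstacle, and it is also what forces the asymmetry in the statement: essential integrability is stable under multiplication by bounded smooth functions but not under arbitrary smooth ones, so for $\mf{L}^{1}$ one must restrict to the bounded algebra $\mc{A}_{c}(M)$, whereas for $\mf{L}_{c}^{1}$ the compact support allows replacing an arbitrary $h$ by the compactly supported $h\chi$.
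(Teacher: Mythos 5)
Your proof is correct and takes essentially the same route as the paper's: the first assertion rests on the stability of essential integrability under multiplication by elements of $\mc{A}_{c}(M)$ (the paper justifies this by $\mc{A}_{c}(M,\R)\subseteq\mc{H}(M)$, you by the uniform bound $C=\sup_{M}|h|$ together with chart-wise continuity — the same standard fact, with your version spelling out the chart-wise details a bit more carefully), and the second assertion is obtained by the identical bump-function reduction $hf=(h\chi)f$.
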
  
\begin{proof}
$\mf{L}^{1}(M,G,\lambda)$ is a $\mc{A}_{c}(M)$-module since $\mc{A}_{c}(M,\R)\subseteq\mc{H}(M)$.
Next let $f\in\mf{L}_{c}^{1}(M,G,\lambda)$ and $\psi:M\to\R$ be a smooth bump function for $\mr{supp}(f)$ supported in $M$,
then $f=\psi f$ therefore for any $g\in\mc{A}(M)$ we have $gf=g\psi f$, but $g\psi\in\mc{A}_{c}(M)$ and the second
sentence of the statement follows by the first sentence of the statement above proven.
\end{proof}
Untill the end of this work we let $U$ be an open set of $M$.
\begin{definition}
Let $\Gamma(c)(U,TM)$ be the $\mc{A}_{c}(U)$-module $\Gamma(U,TM)$, define
$\mf{T}_{s}^{r}(U,M)^{c}=\mf{T}_{s}^{r}(\Gamma(c)(U,TM))$, set $\mf{T}_{s}^{r}(M)^{c}\coloneqq\mf{T}_{s}^{r}(M,M)^{c}$.
Moreover define the $\mc{A}(U)$-modules
\begin{equation*}
\begin{aligned}  
\Gamma(U,\mf{T}_{\bullet}^{\bullet}(TM))^{c}
&\coloneqq
\bigl\{f\in\Gamma(U,\mf{T}_{\bullet}^{\bullet}(TM))\,\vert\,\mf{r}_{\R}(f)\in\mf{T}_{\bullet}^{\bullet}(U,M)^{c}\bigr\};
\\
\Gamma_{c}(U,\mf{T}_{\bullet}^{\bullet}(TM))
&\coloneqq
\bigl\{f\in\Gamma(U,\mf{T}_{\bullet}^{\bullet}(TM))\,\vert\,\mr{supp}(f)\in\mr{Cmp}(M)\bigr\};
\\
\mf{T}_{\bullet}^{\bullet}(U,M)_{c}
&\coloneqq
\bigl\{\zeta\in\mf{T}_{\bullet}^{\bullet}(U,M)\,\vert\,\mr{supp}(\mf{t}_{\R}(\zeta))\in\mr{Cmp}(M)\bigr\}.
\end{aligned}
\end{equation*}
\end{definition}
By construction $\mf{T}_{s}^{r}(U,M)^{c}$ is a $\mc{A}_{c}(U)$-module however we have also that
\begin{lemma}
\label{09150739}
Let $r,s\in\Z_{+}$, thus $\mf{T}_{s}^{r}(U,M)^{c}$ is a $\mc{A}(U)$-module;
$\Gamma_{c}(U,\mf{T}_{s}^{r}(TM))$ is a $\mc{A}(U)$-submodule of $\Gamma(U,\mf{T}_{s}^{r}(TM))^{c}$,
and then $\mf{T}_{s}^{r}(U,M)_{c}$ is a $\mc{A}(U)$-submodule of $\mf{T}_{s}^{r}(U,M)^{c}$.
\end{lemma}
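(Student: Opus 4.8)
The plan is to treat the three assertions in turn; the first carries essentially all the content and the other two then follow formally from it together with the $\mc{A}(U)$-isomorphism $\mf{r}_\R$.

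\textbf{The $\mc{A}(U)$-module structure on $\mf{T}_s^r(U,M)^c$.} For $g\in\mc{A}(U)$ and $\zeta\in\mf{T}_s^r(U,M)^c$ I would define $g\cdot\zeta$ slotwise, $(g\cdot\zeta)(\omega_1,\dots,\omega_r,X_1,\dots,X_s):=g\,\zeta(\omega_1,\dots,\omega_r,X_1,\dots,X_s)$, the product being taken in $\mc{A}(U)$. That $g\cdot\zeta$ is again $\mc{A}_c(U)$-multilinear is immediate from commutativity of $\mc{A}(U)$, and once $g\cdot\zeta$ is known to lie in $\mf{T}_s^r(U,M)^c$ the module axioms and the fact that this action restricts to the given $\mc{A}_c(U)$-action are routine. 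The one genuine point — and the main obstacle — is that $\mc{A}_c(U)$ need not be stable under multiplication by arbitrary elements of $\mc{A}(U)$, so I must check that every value $\zeta(\omega_1,\dots,X_s)$ already belongs to the ideal $\mc{A}_c(U,\R)$ of compactly supported functions, for then $g\,\zeta(\dots)\in\mc{A}_c(U,\R)\subseteq\mc{A}_c(U)$. For this I would prove, by a standard locality argument with bump functions, that any $\mc{A}_c(U)$-linear functional on $\Gamma(U,TM)$ is local and hence is contraction with a \emph{compactly supported} $1$-form; consequently, if $r\ge 1$, each covector argument of $\zeta$ is a compactly supported form and $\zeta(\dots)$ has support contained in a fixed compact set, while if $r=0$ the same observation applied after fixing all the vector arguments but one gives the conclusion. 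This yields $g\cdot\zeta\in\mf{T}_s^r(U,M)^c$, hence that $\mf{T}_s^r(U,M)^c$ is a $\mc{A}(U)$-module.

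\textbf{$\Gamma_c(U,\mf{T}_s^r(TM))$ inside $\Gamma(U,\mf{T}_s^r(TM))^c$.} That $\Gamma_c(U,\mf{T}_s^r(TM))$ is a $\mc{A}(U)$-module is clear, since sums and $\mc{A}(U)$-multiples of compactly supported sections are again compactly supported. For the inclusion I must show $\mf{r}_\R(f)\in\mf{T}_s^r(U,M)^c$ when $\mr{supp}(f)$ is compact: the map $\mf{r}_\R(f)$ is $\mc{A}(U)$-multilinear, a fortiori $\mc{A}_c(U)$-multilinear, and each value $\mf{r}_\R(f)(\omega_1,\dots,X_s)$ has support contained in $\mr{supp}(f)$, hence lies in $\mc{A}_c(U,\R)\subseteq\mc{A}_c(U)$; thus $\mf{r}_\R(f)$ is a $\mc{A}_c(U)$-multilinear $\mc{A}_c(U)$-valued map, which is exactly the condition defining membership of $f$ in $\Gamma(U,\mf{T}_s^r(TM))^c$.

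\textbf{$\mf{T}_s^r(U,M)_c$ inside $\mf{T}_s^r(U,M)^c$.} By definition $\zeta\in\mf{T}_s^r(U,M)_c$ iff $\mf{t}_\R(\zeta)\in\Gamma_c(U,\mf{T}_s^r(TM))$, and then by the previous step $\mf{t}_\R(\zeta)\in\Gamma(U,\mf{T}_s^r(TM))^c$, so $\zeta=\mf{r}_\R(\mf{t}_\R(\zeta))\in\mf{T}_s^r(U,M)^c$ by the very definition of $\Gamma(U,\mf{T}_s^r(TM))^c$; thus $\mf{T}_s^r(U,M)_c=\mf{r}_\R\bigl(\Gamma_c(U,\mf{T}_s^r(TM))\bigr)$. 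Since $\mf{r}_\R$ is $\mc{A}(U)$-linear, the action it transports from $\Gamma(U,\mf{T}_s^r(TM))$ onto $\mf{T}_s^r(U,M)$ is the slotwise one, hence on $\mf{T}_s^r(U,M)_c$ it agrees with the action constructed in the first step; combined with closure of $\Gamma_c(U,\mf{T}_s^r(TM))$ under sums and $\mc{A}(U)$-multiples this shows $\mf{T}_s^r(U,M)_c$ is a $\mc{A}(U)$-submodule of $\mf{T}_s^r(U,M)^c$. As indicated, the only step requiring real care is the locality/compact-support argument in the first part; everything else is unwinding the definitions of $\mf{r}_\R$, $\mf{t}_\R$ and of the decorations $^c$ and $_c$.
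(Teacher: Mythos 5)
The paper states Lemma \ref{09150739} with no proof at all --- the only justification offered is the sentence ``By construction $\mf{T}_{s}^{r}(U,M)^{c}$ is a $\mc{A}_{c}(U)$-module however we have also that'' --- so there is nothing of the author's to compare your argument against; what you have written is a genuine supplement, and it is correct in substance. You have also put your finger on the real issue: for the action $(g\cdot\zeta)(\dots)=g\,\zeta(\dots)$ to land in $\mc{A}_{c}(U)$ one must know that the values of $\zeta$, and the covector arguments, live in the ideal $\mc{A}_{c}(U,\R)$ and not merely in $\mc{A}_{c}(U)$. One step is asserted too quickly, though: ``local and hence is contraction with a \emph{compactly supported} $1$-form.'' The bump-function argument gives locality and then tensoriality, i.e.\ that an $\mc{A}_{c}(U)$-linear functional $\omega$ on $\Gamma(U,TM)$ is contraction with some smooth $1$-form $\tilde{\omega}$ on $U$; the compactness of $\mr{supp}(\tilde{\omega})$ is a further deduction. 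It follows, but you should say how: if $\mr{supp}(\tilde{\omega})$ were not compact, choose a sequence $(p_{n})_{n}$ in it leaving every compact set, with pairwise disjoint, locally finite chart neighbourhoods, and bumps $X^{(n)}$ supported there with $\tilde{\omega}_{p_{n}}(X^{(n)}_{p_{n}})=n$; then $X=\sum_{n}X^{(n)}\in\Gamma(U,TM)$ and $\omega(X)=\tilde{\omega}(X)$ is unbounded, hence outside $\mc{A}_{c}(U)$, a contradiction. With that inserted, your parts two and three are indeed pure bookkeeping with $\mf{r}_{\R}$ and $\mf{t}_{\R}$ as you say.

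Two further remarks. For $s\geq 1$ there is a cheaper route to the first assertion that bypasses the dual-characterization entirely: let $g\in\mc{A}(U)$ act through a vector-field slot, $(g\cdot\zeta)(\theta_{1},\dots,\theta_{r},X_{1},\dots,X_{s})\coloneqq\zeta(\theta_{1},\dots,\theta_{r},X_{1},\dots,gX_{s})$; this automatically takes values in $\mc{A}_{c}(U)$, is $\mc{A}_{c}(U)$-multilinear, satisfies the module axioms, and restricts to the given $\mc{A}_{c}(U)$-action. Your heavier analysis is genuinely needed only when $s=0$ and $r\geq 1$ (acting through a covector slot runs into the same value problem), and it buys the stronger conclusion that the action is multiplication of values, which is what the rest of the paper implicitly uses. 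Finally, for $(r,s)=(0,0)$, where the paper identifies $\mf{T}_{0}^{0}(U,M)^{c}$ with $\mc{A}_{c}(U)$ itself, no such action exists when $U$ is not compact ($g\cdot\un_{U}=g\notin\mc{A}_{c}(U)$); this degenerate case is a defect of the statement rather than of your proof, but your argument tacitly assumes $(r,s)\neq(0,0)$ and you should say so.
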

\begin{definition}
[\textbf{$G$-Valued Scalarly Essentially Integrable Tensor Fields}]
Let $r,s\in\Z_{+}$, define the $\mc{A}(U)$-module of 
$G$-valued scalarly essentially $\lambda$-integrable tensor fields on $M$ defined on $U$ of type $(r,s)$ to be  
\begin{equation*}
\mf{I}_{s}^{r}(U,M;G,\lambda)\coloneqq\mf{L}_{c}^{1}(U,G,\lambda)\otimes_{\mc{A}(U)}\mf{T}_{s}^{r}(U,M).
\end{equation*}
Define the $\mc{A}(U)$-modules 
\begin{equation*}
\ms{I}_{s}^{r}(U,M;G,\lambda)\coloneqq\mf{T}_{s}^{r}\bigl(\Gamma(U,TM),\mf{L}_{c}^{1}(U,G,\lambda)\bigr).
\end{equation*}
and 
\begin{equation*}
\begin{aligned}  
\mf{I}_{s}^{r}(U,M;G,\lambda)^{c}&\coloneqq\mf{L}_{c}^{1}(U,G,\lambda)\otimes_{\mc{A}(U)}\mf{T}_{s}^{r}(U,M)^{c};
\\
\mf{I}_{s}^{r}(U,M;G,\lambda)_{c}&\coloneqq\mf{L}_{c}^{1}(U,G,\lambda)\otimes_{\mc{A}(U)}\mf{T}_{s}^{r}(U,M)_{c}.
\end{aligned}
\end{equation*}
Finally define 
\begin{equation*}
\begin{aligned}  
\mf{I}_{\bullet}^{\bullet}(U,M;G,\lambda)&\coloneqq\bigoplus_{(r,s)\in\Z_{+}\times\Z_{+}}\mf{I}_{s}^{r}(U,M;G,\lambda),
\\
\ms{I}_{\bullet}^{\bullet}(U,M;G,\lambda)&\coloneqq\bigoplus_{(r,s)\in\Z_{+}\times\Z_{+}}\ms{I}_{s}^{r}(U,M;G,\lambda);
\end{aligned}
\end{equation*}
and
\begin{equation*}
\begin{aligned}  
\mf{I}_{\bullet}^{\bullet}(U,M;G,\lambda)^{c}&\coloneqq\bigoplus_{(r,s)\in\Z_{+}\times\Z_{+}}\mf{I}_{s}^{r}(U,M;G,\lambda)^{c};
\\
\mf{I}_{\bullet}^{\bullet}(U,M;G,\lambda)_{c}&\coloneqq\bigoplus_{(r,s)\in\Z_{+}\times\Z_{+}}\mf{I}_{s}^{r}(U,M;G,\lambda)_{c}.
\end{aligned}
\end{equation*}
\end{definition}  
\begin{remark}
\label{09161024int}
Clearly $\mf{I}_{s}^{r}(U,M;G,\lambda)^{c}$ is $\mc{A}(U)$-isomorphic to a submodule of $\mf{I}_{s}^{r}(U,M;G,\lambda)$
and in what follows we shall identify these two modules. Similarly we identify $\mf{I}_{s}^{r}(U,M;G,\lambda)_{c}$
with a submodule of $\mf{I}_{s}^{r}(U,M;G,\lambda)$,
in particular we have $\mf{I}_{s}^{r}(U,M;G,\lambda)_{c}\subseteq\mf{I}_{s}^{r}(U,M;G,\lambda)^{c}$.
\end{remark}
\begin{proposition}
\label{09161155}
$\mf{I}_{\bullet}^{\bullet}(U,M;G,\lambda)=\mf{I}_{\bullet}^{\bullet}(U,M;G,\lambda)_{c}
=\mf{I}_{\bullet}^{\bullet}(U,M;G,\lambda)^{c}$.
\end{proposition}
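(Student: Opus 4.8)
The plan is to show both displayed equalities, and by Remark \ref{09161024int} (which already gives the inclusions $\mf{I}_{s}^{r}(U,M;G,\lambda)_{c}\subseteq\mf{I}_{s}^{r}(U,M;G,\lambda)^{c}\subseteq\mf{I}_{s}^{r}(U,M;G,\lambda)$) it suffices to prove the single reverse inclusion $\mf{I}_{s}^{r}(U,M;G,\lambda)\subseteq\mf{I}_{s}^{r}(U,M;G,\lambda)_{c}$ for each $(r,s)\in\Z_{+}\times\Z_{+}$, after which taking direct sums over $(r,s)$ yields the statement. So fix $(r,s)$ and let $\omega\in\mf{I}_{s}^{r}(U,M;G,\lambda)=\mf{L}_{c}^{1}(U,G,\lambda)\otimes_{\mc{A}(U)}\mf{T}_{s}^{r}(U,M)$; write $\omega=\sum_{i=1}^{n}f_{i}\otimes\zeta_{i}$ with $f_{i}\in\mf{L}_{c}^{1}(U,G,\lambda)$ and $\zeta_{i}\in\mf{T}_{s}^{r}(U,M)$.

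The key step is a compact-support trimming argument on the tensor-field factor, using the fact that each $f_{i}$ has compact support. Set $K\coloneqq\bigcup_{i=1}^{n}\mr{supp}(f_{i})$, a compact subset of $U$. Choose, by the usual smooth Urysohn/bump construction on the manifold, a function $\psi\in\mc{A}_{c}(U)$ with $\psi\equiv 1$ on a neighbourhood of $K$ and with $\mr{supp}(\psi)\in\mr{Cmp}(M)$ (a compact subset of $U$, hence of $M$). Then $\psi f_{i}=f_{i}$ for every $i$, since $\psi$ is identically $1$ on the support of $f_{i}$; consequently, pushing the scalar $\psi\in\mc{A}(U)$ across the tensor product, $\omega=\sum_{i}f_{i}\otimes\zeta_{i}=\sum_{i}(\psi f_{i})\otimes\zeta_{i}=\sum_{i}f_{i}\otimes(\psi\zeta_{i})$. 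Now $\psi\zeta_{i}\in\mf{T}_{s}^{r}(U,M)$ satisfies $\mr{supp}(\mf{t}_{\R}(\psi\zeta_{i}))\subseteq\mr{supp}(\psi)\in\mr{Cmp}(M)$, because $\mf{t}_{\R}$ is an $\mc{A}(U)$-isomorphism and hence commutes with multiplication by $\psi$, so that $\mf{t}_{\R}(\psi\zeta_{i})=\psi\,\mf{t}_{\R}(\zeta_{i})$, whose support is contained in $\mr{supp}(\psi)$. Thus $\psi\zeta_{i}\in\mf{T}_{s}^{r}(U,M)_{c}$ by the very definition of that module, and therefore $\omega=\sum_{i}f_{i}\otimes(\psi\zeta_{i})$ exhibits $\omega$ as an element of $\mf{L}_{c}^{1}(U,G,\lambda)\otimes_{\mc{A}(U)}\mf{T}_{s}^{r}(U,M)_{c}=\mf{I}_{s}^{r}(U,M;G,\lambda)_{c}$.

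This gives $\mf{I}_{s}^{r}(U,M;G,\lambda)=\mf{I}_{s}^{r}(U,M;G,\lambda)_{c}$, and combined with the inclusions of Remark \ref{09161024int} it forces $\mf{I}_{s}^{r}(U,M;G,\lambda)_{c}=\mf{I}_{s}^{r}(U,M;G,\lambda)^{c}=\mf{I}_{s}^{r}(U,M;G,\lambda)$ for every $(r,s)$; summing over $(r,s)\in\Z_{+}\times\Z_{+}$ and using that all the $\bullet$-modules are by definition the corresponding direct sums yields the proposition.

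The main obstacle I anticipate is bookkeeping rather than conceptual: one must make sure the bump function $\psi$ can genuinely be taken in $\mc{A}_{c}(U)$ with $\mr{supp}(\psi)$ compact as a subset of $M$ — which is standard since $U$ is open in the (Hausdorff, locally compact) manifold $M$ and $K$ is compact — and one must be slightly careful that multiplication by $\psi$ really does move across the $\mc{A}(U)$-balanced tensor product and commutes with $\mf{t}_{\R}$; both are immediate from $\mf{t}_{\R}$ being an $\mc{A}(U)$-module isomorphism and from $\psi\in\mc{A}(U)$. A secondary point worth a line is that the identification of $\mf{I}_{s}^{r}(U,M;G,\lambda)_{c}$ and $\mf{I}_{s}^{r}(U,M;G,\lambda)^{c}$ with submodules of $\mf{I}_{s}^{r}(U,M;G,\lambda)$ invoked here is precisely the one fixed in Remark \ref{09161024int}, so no circularity arises.
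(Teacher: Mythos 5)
Your proof is correct and follows essentially the same route as the paper: the paper also reduces to elementary tensors $f\otimes T$, chooses a smooth bump function $\psi$ for $\mr{supp}(f)$ supported in $U$, writes $f\otimes T=(\psi f)\otimes T=f\otimes(\psi T)$ to push the compact support onto the tensor-field factor, and concludes via the inclusions of Remark \ref{09161024int}. Your version merely makes explicit the uniform bump function over a finite sum of elementary tensors and the verification that $\psi\zeta_{i}\in\mf{T}_{s}^{r}(U,M)_{c}$, which the paper leaves implicit.
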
  
\begin{proof}
If $f\in\mf{L}_{c}^{1}(U,G,\lambda)$ and $T\in\mf{T}_{s}^{r}(U,M)$
and $\psi$ is a smooth bump function for $\mr{supp}(f)$ supported in $U$, then $f=\psi f$, so
$f\otimes T=(\psi f)\otimes T=f\otimes(\psi T)$. 
Thus the statement follows since Rmk. \ref{09161024int}.
\end{proof}
\begin{lemma}
\label{09121429}
Assume $\K=\C$, thus $\mf{L}^{1}(U,G,\lambda)=\mf{L}^{1}(U,G_{\R},\lambda)$, in particular
$\mf{I}_{\bullet}^{\bullet}(U,M;G_{\R},\lambda)=\mf{I}_{\bullet}^{\bullet}(U,M;G,\lambda)$.
\end{lemma}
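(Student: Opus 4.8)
The plan is to push the whole statement down to the scalar level. By Definition~\ref{09281553}, membership of a map in $\mf{L}^{1}(U,G,\lambda)$ is tested chartwise, so it suffices to fix an open set $A\subseteq\R^{N}$ and a map $g\colon A\to G$ and prove that $g$ is scalarly essentially $\lambda_{A}$-integrable when regarded as a $G$-valued map if and only if it is so when regarded as a $G_{\R}$-valued map; the corresponding statement for $\mf{L}^{1}$, and then for $\mf{L}_{c}^{1}$ by intersecting with the compactly supported maps, follows at once, and the equality of the modules $\mf{I}_{\bullet}^{\bullet}$ is a formal consequence.

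First I would record the standard correspondence between the $\C$-dual $G^{\prime}=\mc{L}(G,\C)$ and the $\R$-dual $(G_{\R})^{\prime}=\mc{L}(G_{\R},\R)$. If $\uppsi\in G^{\prime}$, then $\mf{R}\circ\imath_{\C}^{\C_{\R}}\circ\uppsi$ and $\mf{I}\circ\imath_{\C}^{\C_{\R}}\circ\uppsi$ are continuous $\R$-linear maps $G_{\R}\to\R$, i.e.\ elements of $(G_{\R})^{\prime}$, since $\mf{R},\mf{I}\in\mc{L}(\C_{\R},\R)$ and composing them with the continuous $\R$-linear map underlying $\uppsi$ preserves continuity and $\R$-linearity. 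Conversely, every $\varphi\in(G_{\R})^{\prime}$ equals $\mf{R}\circ\imath_{\C}^{\C_{\R}}\circ\uppsi$ for the map $\uppsi\colon x\mapsto\varphi(x)-i\,\varphi(ix)$, which is $\C$-linear and continuous, hence lies in $G^{\prime}$. In particular $\{\,\mf{R}\circ\imath_{\C}^{\C_{\R}}\circ\uppsi\mid\uppsi\in G^{\prime}\,\}=(G_{\R})^{\prime}$.

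Now the two implications. If $g$ is scalarly essentially $\lambda_{A}$-integrable as a $G$-valued map and $\varphi\in(G_{\R})^{\prime}$, write $\varphi=\mf{R}\circ\imath_{\C}^{\C_{\R}}\circ\uppsi$ with $\uppsi\in G^{\prime}$; then $\varphi\circ g=\mf{R}\circ\imath_{\C}^{\C_{\R}}\circ(\uppsi\circ g)$, and since $\uppsi\circ g$ is essentially $\lambda_{A}$-integrable its real part is essentially $\lambda_{A}$-integrable by the very definition of essential integrability of a $\C$-valued map, so $\varphi\circ g$ is essentially $\lambda_{A}$-integrable. Conversely, if $g$ is scalarly essentially $\lambda_{A}$-integrable as a $G_{\R}$-valued map and $\uppsi\in G^{\prime}$, then $\mf{R}\circ\imath_{\C}^{\C_{\R}}\circ\uppsi$ and $\mf{I}\circ\imath_{\C}^{\C_{\R}}\circ\uppsi$ both belong to $(G_{\R})^{\prime}$, so $\mf{R}\circ\imath_{\C}^{\C_{\R}}\circ(\uppsi\circ g)$ and $\mf{I}\circ\imath_{\C}^{\C_{\R}}\circ(\uppsi\circ g)$ are essentially $\lambda_{A}$-integrable, which is exactly the assertion that $\uppsi\circ g$ is essentially $\lambda_{A}$-integrable. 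This yields $\mf{L}^{1}(U,G,\lambda)=\mf{L}^{1}(U,G_{\R},\lambda)$, and hence $\mf{L}_{c}^{1}(U,G,\lambda)=\mf{L}_{c}^{1}(U,G_{\R},\lambda)$.

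Finally, for the ``in particular'' clause I would note that these two coincide not merely as sets but as $\mc{A}(U)$-modules, since the module operation is multiplication by real-valued smooth functions and is insensitive to the complex structure on $G$; as $\mf{T}_{s}^{r}(U,M)$ is of course the same in both cases, the $\mc{A}(U)$-tensor products $\mf{L}_{c}^{1}(U,G,\lambda)\otimes_{\mc{A}(U)}\mf{T}_{s}^{r}(U,M)$ and $\mf{L}_{c}^{1}(U,G_{\R},\lambda)\otimes_{\mc{A}(U)}\mf{T}_{s}^{r}(U,M)$ agree for every $(r,s)\in\Z_{+}\times\Z_{+}$, and summing over $(r,s)$ gives $\mf{I}_{\bullet}^{\bullet}(U,M;G_{\R},\lambda)=\mf{I}_{\bullet}^{\bullet}(U,M;G,\lambda)$. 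The only point needing a little care is stating the $G^{\prime}$--$(G_{\R})^{\prime}$ correspondence at the right level of generality: it is purely the elementary complex-to-real linear-algebra fact together with the observation that $G$ and $G_{\R}$ carry the same topology, so no completeness or metrizability hypothesis enters; once that is in place the rest is bookkeeping against the definitions in the Notation section.
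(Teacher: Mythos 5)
Your proof is correct and follows essentially the same route as the paper: one inclusion from the fact that the real and imaginary parts of any $\uppsi\in G^{\prime}$ lie in $(G_{\R})^{\prime}$, and the reverse inclusion from the fact that every $\varphi\in(G_{\R})^{\prime}$ is the real part of a unique $\uppsi\in G^{\prime}$ (the paper cites Bourbaki, \emph{EVT} II.65(1), where you instead exhibit $\uppsi\colon x\mapsto\varphi(x)-i\varphi(ix)$ explicitly, which is the same fact). Your extra remarks on the chartwise reduction and on the ``in particular'' clause are sound bookkeeping that the paper leaves implicit.
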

\begin{proof}
$\mf{L}^{1}(U,G_{\R},\lambda)\subseteq\mf{L}^{1}(U,G,\lambda)$ since for every $\uppsi\in\mc{L}(G,\C)$ we have
$\mf{R}\circ\imath_{\C}^{\C_{\R}}\circ\uppsi\circ\imath_{G_{\R}}^{G}\in\mc{L}(G_{\R},\R)$ and 
$\mf{I}\circ\imath_{\C}^{\C_{\R}}\circ\uppsi\circ\imath_{G_{\R}}^{G}\in\mc{L}(G_{\R},\R)$.
Next according to what stated immediately after \cite[II.65(1)]{EVT} we have that
\begin{equation}  
\label{09171644}  
(\forall\phi\in\mf{L}(G_{\R},\R))(\exists\,!\uppsi\in\mf{L}(G,\C))
(\phi=\mf{R}\circ\imath_{\C}^{\C_{\R}}\circ\uppsi\circ\imath_{G_{\R}}^{G});
\end{equation}
from which we deduce that $\mf{L}^{1}(U,G,\lambda)\subseteq\mf{L}^{1}(U,G_{\R},\lambda)$.
\end{proof}
\begin{proposition}
\label{09101648int}
$\mf{I}_{\bullet}^{\bullet}(U,M;G,\lambda)$ is isomorphic to
$\mf{L}_{c}^{1}(U,G,\lambda)\otimes_{\mc{A}(U)}\mf{T}_{\bullet}^{\bullet}(U,M)$; while
$\mf{I}_{\bullet}^{\bullet}(U,M;G,\lambda)^{c}$ is isomorphic to
$\mf{L}_{c}^{1}(U,G,\lambda)\otimes_{\mc{A}(U)}\mf{T}_{\bullet}^{\bullet}(U,M)^{c}$
as well
$\mf{I}_{\bullet}^{\bullet}(U,M;G,\lambda)_{c}$ is isomorphic to
$\mf{L}_{c}^{1}(U,G,\lambda)\otimes_{\mc{A}(U)}\mf{T}_{\bullet}^{\bullet}(U,M)_{c}$
in the category $\mc{A}(U)-\mr{mod}$.
\end{proposition}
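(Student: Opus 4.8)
The statement is essentially the assertion that a direct sum of tensor products (over a fixed ring $\mc{A}(U)$, with a fixed left factor $\mf{L}_c^1(U,G,\lambda)$) commutes with direct sums in the right factor. So the plan is to reduce everything to the standard fact that $B \otimes_A (\bigoplus_i C_i) \cong \bigoplus_i (B \otimes_A C_i)$ naturally in the category of $\Z$-modules (or $A$-modules when $A$ is commutative, which $\mc{A}(U)$ is), and then just check that the three cases at hand are instances of this.

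First I would recall the definitions: by definition $\mf{I}_\bullet^\bullet(U,M;G,\lambda) = \bigoplus_{(r,s)} \mf{I}_s^r(U,M;G,\lambda) = \bigoplus_{(r,s)} \bigl(\mf{L}_c^1(U,G,\lambda)\otimes_{\mc{A}(U)}\mf{T}_s^r(U,M)\bigr)$, and likewise $\mf{T}_\bullet^\bullet(U,M) = \bigoplus_{(r,s)}\mf{T}_s^r(U,M)$ by the definition in the Notation section. So the first isomorphism is exactly the canonical isomorphism $\bigoplus_i (B\otimes_A C_i) \cong B\otimes_A \bigoplus_i C_i$ with $B = \mf{L}_c^1(U,G,\lambda)$, $A = \mc{A}(U)$, $C_{(r,s)} = \mf{T}_s^r(U,M)$, which is a morphism in $\mc{A}(U)-\mr{mod}$ since $\mc{A}(U)$ is commutative. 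Second I would observe that the superscript-$c$ and subscript-$c$ variants are handled identically: $\mf{I}_\bullet^\bullet(U,M;G,\lambda)^c$ is by definition $\bigoplus_{(r,s)}\bigl(\mf{L}_c^1(U,G,\lambda)\otimes_{\mc{A}(U)}\mf{T}_s^r(U,M)^c\bigr)$, so one applies the same canonical isomorphism with $C_{(r,s)} = \mf{T}_s^r(U,M)^c$, provided one knows $\bigoplus_{(r,s)}\mf{T}_s^r(U,M)^c$ is the relevant right-hand module; but this holds essentially by definition together with the fact (Lemma \ref{09150739}) that $\mf{T}_s^r(U,M)^c$ is an $\mc{A}(U)$-module, so that $\mf{T}_\bullet^\bullet(U,M)^c \coloneqq \bigoplus_{(r,s)}\mf{T}_s^r(U,M)^c$ makes sense as an $\mc{A}(U)$-module. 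The subscript-$c$ case is verbatim the same with $\mf{T}_s^r(U,M)_c$.

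The only genuine point to verify — and what I would flag as the ``main obstacle,'' though it is mild — is making sure that the symbols $\mf{T}_\bullet^\bullet(U,M)^c$ and $\mf{T}_\bullet^\bullet(U,M)_c$ appearing on the right-hand side of the claimed isomorphisms are literally the direct sums $\bigoplus_{(r,s)}\mf{T}_s^r(U,M)^c$ and $\bigoplus_{(r,s)}\mf{T}_s^r(U,M)_c$ in the category $\mc{A}(U)-\mr{mod}$ (the paper defines $\mf{T}_\bullet^\bullet(TM)^c$, $\Gamma_c(U,\mf{T}_\bullet^\bullet(TM))$, $\mf{T}_\bullet^\bullet(U,M)_c$ as submodules cut out by conditions on the graded pieces, and one should note these decompose as the asserted direct sums; Lemma \ref{09150739} supplies the $\mc{A}(U)$-module structure on each graded piece). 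Once that bookkeeping is in place, nothing further is needed: tensoring by the fixed module $\mf{L}_c^1(U,G,\lambda)$ and taking direct sums commute.

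So in summary the proof is: for each of the three cases, expand both sides using the definitions of $\mf{I}_\bullet^\bullet$ and of the relevant $\mf{T}_\bullet^\bullet$ as direct sums over $(r,s)$, and invoke the canonical $\mc{A}(U)$-module isomorphism $\bigoplus_{(r,s)}\bigl(\mf{L}_c^1(U,G,\lambda)\otimes_{\mc{A}(U)} C_{(r,s)}\bigr) \cong \mf{L}_c^1(U,G,\lambda)\otimes_{\mc{A}(U)}\bigl(\bigoplus_{(r,s)} C_{(r,s)}\bigr)$ with $C_{(r,s)}$ taken to be $\mf{T}_s^r(U,M)$, $\mf{T}_s^r(U,M)^c$, $\mf{T}_s^r(U,M)_c$ respectively. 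I expect the write-up to be short, one display per case plus the remark that these fit into $\mc{A}(U)-\mr{mod}$ because $\mc{A}(U)$ is commutative.
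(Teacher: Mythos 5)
Your proposal is correct and matches the paper's proof, which simply cites Bourbaki (Algebre, II.61, Prop.\ 7) for the canonical $\Z$-linear isomorphism between a tensor product with a direct sum and the direct sum of the tensor products, and then observes it is an $\mc{A}(U)$-module isomorphism because $\mc{A}(U)$ is commutative. Your additional bookkeeping about identifying $\mf{T}_{\bullet}^{\bullet}(U,M)^{c}$ and $\mf{T}_{\bullet}^{\bullet}(U,M)_{c}$ with the direct sums of their graded pieces is implicit in the paper and does no harm.
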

\begin{proof}
Since \cite[II.61 Prp. 7]{BourA1} there exist (canonical) $\Z$-linear isomorphisms, which are clearly
a $\mc{A}(U)-\mr{mod}$ isomorphisms by the definition of the module structure of the tensor product of modules over a
commutative ring.
\end{proof}
We shall identify the above isomorphic modules.
\begin{proposition}
\label{08101431}
\begin{multline*}
\left(\exists\,!\Upphi\in
\mr{Mor}_{\mc{A}(U)-\mr{mod}}\left(\mf{I}_{\bullet}^{\bullet}(U,M;G,\lambda),\ms{I}_{\bullet}^{\bullet}(U,M;G,\lambda)\right)\right)
(\forall(r,s)\in\Z_{+}\times\Z_{+})
\\
(\forall f\in\mf{L}_{c}^{1}(U,G,\lambda))
(\forall T\in\mf{T}_{s}^{r}(U,M))
(\forall(\theta_{1},\dots,\theta_{r},X_{1},\dots,X_{s})\in\prod[\Gamma(U,TM),r,s])
\\
\bigl(\Upphi(f\otimes T)
(\theta_{1},\dots,\theta_{r},X_{1},\dots,X_{s})=
T(\theta_{1},\dots,\theta_{r},X_{1},\dots,X_{s})\cdot f\bigr).
\end{multline*}
\end{proposition}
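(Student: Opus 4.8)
The plan is to construct $\Upphi$ by the universal property of the tensor product and then to verify $\mc{A}(U)$-linearity. First I would fix $(r,s)\in\Z_{+}\times\Z_{+}$ and consider the map
\begin{equation*}
\beta\colon\mf{L}_{c}^{1}(U,G,\lambda)\times\mf{T}_{s}^{r}(U,M)\to\ms{I}_{s}^{r}(U,M;G,\lambda),\quad
(f,T)\mapsto\bigl[(\theta_{1},\dots,\theta_{r},X_{1},\dots,X_{s})\mapsto T(\theta_{1},\dots,X_{s})\cdot f\bigr].
\end{equation*}
One checks that for fixed $(f,T)$ this is indeed an element of $\ms{I}_{s}^{r}(U,M;G,\lambda)=\mf{T}_{s}^{r}\bigl(\Gamma(U,TM),\mf{L}_{c}^{1}(U,G,\lambda)\bigr)$: it is $\mc{A}(U)$-multilinear in the arguments $(\theta_{1},\dots,X_{s})$ because $T$ is $\mc{A}(U)$-multilinear and scalar multiplication by $\mc{A}(U)$ on $\mf{L}_{c}^{1}(U,G,\lambda)$ is the module action, and it indeed lands in $\mf{L}_{c}^{1}(U,G,\lambda)$ since $T(\theta_{1},\dots,X_{s})\in\mc{A}(U)$ and $\mf{L}_{c}^{1}(U,G,\lambda)$ is a $\mc{A}(U)$-module. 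Next I would check that $\beta$ is $\mc{A}(U)$-bilinear: bilinearity over $\Z$ is clear from the formula, and for $g\in\mc{A}(U)$ one has $\beta(gf,T)=\beta(f,gT)$ because $(gT)(\theta_{1},\dots,X_{s})\cdot f=\bigl(g\cdot T(\theta_{1},\dots,X_{s})\bigr)\cdot f=T(\theta_{1},\dots,X_{s})\cdot(gf)$, using commutativity of $\mc{A}(U)$ and the module axioms. By the universal property of $\otimes_{\mc{A}(U)}$ there is a unique $\mc{A}(U)$-linear map $\Upphi_{s}^{r}\colon\mf{I}_{s}^{r}(U,M;G,\lambda)\to\ms{I}_{s}^{r}(U,M;G,\lambda)$ with $\Upphi_{s}^{r}(f\otimes T)=\beta(f,T)$, which is exactly the displayed identity.

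Then I would assemble $\Upphi\coloneqq\bigoplus_{(r,s)}\Upphi_{s}^{r}$ on the direct sums; this is $\mc{A}(U)$-linear because each summand is, and it satisfies the stated formula on each homogeneous piece. For uniqueness: any $\mc{A}(U)$-linear $\Psi$ satisfying the formula agrees with $\Upphi$ on all elementary tensors $f\otimes T$, and since these generate $\mf{I}_{s}^{r}(U,M;G,\lambda)$ as a $\Z$-module (hence as an $\mc{A}(U)$-module), $\Psi=\Upphi$ on each $\mf{I}_{s}^{r}(U,M;G,\lambda)$ and therefore on the direct sum.

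The only genuinely delicate point — and the one I expect to be the main obstacle — is verifying that $\beta(f,T)$ actually lies in the codomain $\ms{I}_{s}^{r}(U,M;G,\lambda)$, i.e. that the assignment $(\theta_{1},\dots,X_{s})\mapsto T(\theta_{1},\dots,X_{s})\cdot f$ is genuinely $\mc{A}(U)$-multilinear as a map into $\mf{L}_{c}^{1}(U,G,\lambda)$ rather than merely $\R$-multilinear; this rests on the fact that the module action of $\mc{A}(U)$ on $\mf{L}_{c}^{1}(U,G,\lambda)$ is compatible with the scalar $T(\theta_{1},\dots,X_{s})\in\mc{A}(U)$, which is immediate once one unwinds the definitions but is the place where the $\mc{A}(U)$-module structure on $\mf{L}_{c}^{1}(U,G,\lambda)$ (from the preceding Lemma) is actually used. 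Everything else is formal bookkeeping with the universal property and direct sums.
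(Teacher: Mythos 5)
Your proposal is correct and follows essentially the same route as the paper: the paper's proof is precisely an appeal to the universal property of the tensor product over the commutative ring $\mc{A}(U)$ applied to the bilinear map $(f,T)\mapsto\bigl[(\theta_{1},\dots,X_{s})\mapsto T(\theta_{1},\dots,X_{s})\cdot f\bigr]$, which is your map $\beta$. You merely spell out the verifications (well-definedness of the codomain, $\mc{A}(U)$-bilinearity, assembly over the direct sum, uniqueness) that the paper leaves implicit.
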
  
\begin{proof}
By the universal property of the tensor product over a commutative ring applied to the $\mc{A}(U)$-bilinear map
$\ast:\mf{L}_{c}^{1}(U,G,\lambda)\times\mf{T}_{s}^{r}(U,M)\to\ms{I}_{s}^{r}(U,M;G;\lambda)$, $(f,T)\mapsto f\ast T$,
defined by $(f\ast T)(\theta_{1},\dots,\theta_{r},X_{1},\dots,X_{s})=T(\theta_{1},\dots,\theta_{r},X_{1},\dots,X_{s})\cdot f$.
\end{proof}
\begin{corollary}
[\textbf{Unique Decomposition of $G$-Valued Integrable Tensor fields at $M$ defined on a Chart}]
\label{10011800}  
Let $(U,\phi)$ be a chart of $M$, $r,s\in\Z_{+}$ and $\mr{T}\in\mf{I}_{s}^{r}(U,M;G;\lambda)$, thus 
\begin{equation*}
\bigl(\exists\,!f:\Xi(b^{r,s,\phi})\to\mf{L}_{c}^{1}(U,G,\lambda)\bigr)
\left(\mr{T}=\sum_{\alpha\in\Xi(b^{r,s,\phi})}f_{\alpha}\otimes(\otimes(b^{r,s,\phi})^{\ast})_{\alpha}\right).
\end{equation*}
\end{corollary}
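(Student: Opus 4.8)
The plan is to reduce the statement to the standard fact, recalled in the Introduction, that tensoring an arbitrary module over a commutative ring with a \emph{free} module yields a unique expansion of every element along a basis of the free factor; here the free factor is $\mf{T}_{s}^{r}(U,M)$, which is free precisely because $U$ is the domain of a chart.

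First I would record that $\mf{T}_{s}^{r}(U,M)=\mf{T}_{s}^{r}(\Gamma(U,TM))$ is a free $\mc{A}(U)$-module of finite rank $N^{r+s}$. Indeed, the trivialization of $TM\up U$ supplied by $\phi$ makes $\Gamma(U,TM)$ a free $\mc{A}(U)$-module of rank $N$, hence so is $\Gamma(U,\mf{T}_{s}^{r}(TM))$, and the $\mc{A}(U)$-isomorphism $\mf{r}=\mf{r}_{\R}$ transports the bundle basis associated with $(U,\phi)$ onto $\{(\otimes(b^{r,s,\phi})^{\ast})_{\alpha}\mid\alpha\in\Xi(b^{r,s,\phi})\}$, which is thus an $\mc{A}(U)$-basis of $\mf{T}_{s}^{r}(U,M)$, as already stated in the Notation; in particular $\Xi(b^{r,s,\phi})$ is finite.

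Next I would write $\mf{T}_{s}^{r}(U,M)=\bigoplus_{\alpha\in\Xi(b^{r,s,\phi})}\mc{A}(U)\cdot(\otimes(b^{r,s,\phi})^{\ast})_{\alpha}$ as an internal direct sum of free rank-one submodules, and combine distributivity of $\otimes_{\mc{A}(U)}$ over direct sums \cite[II.61 Prp. 7]{BourA1} with the canonical isomorphism $\mf{L}_{c}^{1}(U,G,\lambda)\otimes_{\mc{A}(U)}\mc{A}(U)\xrightarrow{\sim}\mf{L}_{c}^{1}(U,G,\lambda)$, $h\otimes g\mapsto gh$, to obtain an $\mc{A}(U)$-module isomorphism
\[
\Theta\colon\mf{I}_{s}^{r}(U,M;G,\lambda)=\mf{L}_{c}^{1}(U,G,\lambda)\otimes_{\mc{A}(U)}\mf{T}_{s}^{r}(U,M)\xrightarrow{\ \sim\ }\bigoplus_{\alpha\in\Xi(b^{r,s,\phi})}\mf{L}_{c}^{1}(U,G,\lambda)
\]
sending $h\otimes(\otimes(b^{r,s,\phi})^{\ast})_{\alpha}$ to the family with $h$ in slot $\alpha$ and $0$ elsewhere.

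Finally, given $\mr{T}$, I would define $f\colon\Xi(b^{r,s,\phi})\to\mf{L}_{c}^{1}(U,G,\lambda)$ by $f_{\alpha}\coloneqq\Theta(\mr{T})_{\alpha}$. Then $\Theta$ carries $\sum_{\alpha}f_{\alpha}\otimes(\otimes(b^{r,s,\phi})^{\ast})_{\alpha}$ to $\Theta(\mr{T})$, so injectivity of $\Theta$ gives $\mr{T}=\sum_{\alpha}f_{\alpha}\otimes(\otimes(b^{r,s,\phi})^{\ast})_{\alpha}$; and if $(g_{\alpha})_{\alpha}$ is any family with the same property, applying $\Theta$ yields $(g_{\alpha})_{\alpha}=\Theta(\mr{T})=(f_{\alpha})_{\alpha}$, hence $g=f$. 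I do not expect a genuine obstacle: the one point demanding care is the freeness of $\mf{T}_{s}^{r}(U,M)$ over $\mc{A}(U)$ on a chart domain, everything else being the routine computation of a tensor product with a free module — which could alternatively be invoked directly from the remark in the Introduction, without constructing $\Theta$ explicitly.
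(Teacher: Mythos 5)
Your proposal is correct and follows essentially the same route as the paper: the paper's proof simply observes that $\{(\otimes(b^{r,s,\phi})^{\ast})_{\alpha}\,\vert\,\alpha\in\Xi(b^{r,s,\phi})\}$ is a basis of $\mf{T}_{s}^{r}(U,M)$ and then cites \cite[II.62 Cor.1]{BourA1} for the unique expansion of elements of a tensor product with a free module. Your explicit construction of the isomorphism $\Theta$ is just an inline proof of that cited Bourbaki result, so there is no substantive difference.
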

\begin{proof}
$\{(\otimes(b^{r,s,\phi})^{\ast})_{\alpha}\,\vert\,\alpha\in\Xi(b^{r,s,\phi})\}$ is a basis of $\mf{T}_{s}^{r}(U,M)$,
thus the statement follows since \cite[II.62 Cor.1]{BourA1}.
\end{proof}  
\begin{definition}
[\textbf{Bar Operators on Integrable Tensor Fields}]  
\label{09101743}
Define the $\mc{A}(U)$-module 
\begin{equation*}
\Gamma(U,\mf{T}_{\bullet}^{\bullet}(TM);G;\lambda)\coloneqq
\mf{L}_{c}^{1}(U,G,\lambda)\otimes_{\mc{A}(U)}\Gamma(U,\mf{T}_{\bullet}^{\bullet}(TM)).
\end{equation*}
Define 
\begin{equation*}
\begin{aligned}
\mf{t}_{G}&\in\mr{Mor}_{\mc{A}(U)-\mr{mod}}\bigl(\mf{I}_{\bullet}^{\bullet}(U,M;G,\lambda),
\Gamma(U,\mf{T}_{\bullet}^{\bullet}(TM);G,\lambda)\bigr),
\\
\mf{t}_{G}&\coloneqq\mr{Id}_{\mf{L}_{c}^{1}(U,G,\lambda)}\otimes\mf{t}_{\R};
\end{aligned}
\end{equation*}
and 
\begin{equation*}
\begin{aligned}
\mf{r}_{G}&\in
\mr{Mor}_{\mc{A}(U)-\mr{mod}}
\bigl(\Gamma(U,\mf{T}_{\bullet}^{\bullet}(TM);G,\lambda),\mf{I}_{\bullet}^{\bullet}(U,M;G,\lambda)\bigr);
\\
\mf{r}_{G}&\coloneqq\mr{Id}_{\mf{L}_{c}^{1}(U,G,\lambda)}\otimes\mf{r}_{\R}.
\end{aligned}
\end{equation*}
\end{definition}
\begin{proposition}
\label{08281918int}
$\mf{t}_{G}$ and $\mf{r}_{G}$ are isomorphisms one the inverse of the other in the category $\mc{A}(U)-\mr{mod}$.
\end{proposition}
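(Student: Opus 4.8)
The plan is to recognize both $\mf{t}_G$ and $\mf{r}_G$ as tensor products (over $\mc{A}(U)$) of identity maps with the mutually inverse $\mc{A}(U)$-isomorphisms $\mf{t}_{\R}$ and $\mf{r}_{\R}$, and then invoke functoriality of the tensor product. Concretely, by Def.~\ref{09101743} we have $\mf{t}_G = \mr{Id}_{\mf{L}_{c}^{1}(U,G,\lambda)}\otimes\mf{t}_{\R}$ and $\mf{r}_G = \mr{Id}_{\mf{L}_{c}^{1}(U,G,\lambda)}\otimes\mf{r}_{\R}$. Since the tensor product over the commutative ring $\mc{A}(U)$ is a bifunctor $\mc{A}(U)-\mr{mod}\times\mc{A}(U)-\mr{mod}\to\mc{A}(U)-\mr{mod}$, composition of morphisms in each slot is respected: $(\mr{Id}\otimes g)\circ(\mr{Id}\otimes h) = \mr{Id}\otimes(g\circ h)$, and $\mr{Id}\otimes\mr{Id}_E = \mr{Id}_{\mf{L}_{c}^{1}(U,G,\lambda)\otimes_{\mc{A}(U)}E}$ for any $\mc{A}(U)$-module $E$.

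First I would compute $\mf{r}_G\circ\mf{t}_G = (\mr{Id}\otimes\mf{r}_{\R})\circ(\mr{Id}\otimes\mf{t}_{\R}) = \mr{Id}\otimes(\mf{r}_{\R}\circ\mf{t}_{\R})$. Because $\mf{t}_{\R}$ is the inverse of $\mf{r}_{\R}$ (as recalled in the Notation section, $\mf{t}_{\R}=\mf{t}$ is by definition the inverse of the $\mc{A}(U)$-isomorphism $\mf{r}_{\R}=\mf{r}$ from $\Gamma(U,\mf{T}_{\bullet}^{\bullet}(TM))$ onto $\mf{T}_{\bullet}^{\bullet}(U,M)$), we get $\mf{r}_{\R}\circ\mf{t}_{\R} = \mr{Id}_{\mf{T}_{\bullet}^{\bullet}(U,M)}$, hence $\mf{r}_G\circ\mf{t}_G = \mr{Id}_{\mf{I}_{\bullet}^{\bullet}(U,M;G,\lambda)}$ after identifying $\mf{I}_{\bullet}^{\bullet}(U,M;G,\lambda)$ with $\mf{L}_{c}^{1}(U,G,\lambda)\otimes_{\mc{A}(U)}\mf{T}_{\bullet}^{\bullet}(U,M)$ via Prp.~\ref{09101648int}. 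Symmetrically, $\mf{t}_G\circ\mf{r}_G = \mr{Id}\otimes(\mf{t}_{\R}\circ\mf{r}_{\R}) = \mr{Id}_{\Gamma(U,\mf{T}_{\bullet}^{\bullet}(TM);G,\lambda)}$. Thus $\mf{t}_G$ and $\mf{r}_G$ are $\mc{A}(U)$-module isomorphisms, each the inverse of the other.

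The only point needing a modicum of care — and the closest thing to an obstacle — is bookkeeping the identifications already in force: the source and target of $\mf{t}_G$ as written in Def.~\ref{09101743} are $\mf{I}_{\bullet}^{\bullet}(U,M;G,\lambda)$ and $\Gamma(U,\mf{T}_{\bullet}^{\bullet}(TM);G,\lambda)$, which by definition and by Prp.~\ref{09101648int} (together with the identifications announced right after it) are precisely the tensor products $\mf{L}_{c}^{1}(U,G,\lambda)\otimes_{\mc{A}(U)}\mf{T}_{\bullet}^{\bullet}(U,M)$ and $\mf{L}_{c}^{1}(U,G,\lambda)\otimes_{\mc{A}(U)}\Gamma(U,\mf{T}_{\bullet}^{\bullet}(TM))$. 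Once this identification is made explicit, the functoriality argument applies verbatim and nothing else is required; I would simply cite \cite[II.61 Prp.~7]{BourA1} or the elementary functoriality of $\otimes_{\mc{A}(U)}$ for the identities $(\mr{Id}\otimes g)\circ(\mr{Id}\otimes h)=\mr{Id}\otimes(g\circ h)$ and $\mr{Id}\otimes\mr{Id}=\mr{Id}$.
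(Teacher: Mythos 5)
Your proposal is correct and follows essentially the same route as the paper, whose one-line proof ("Since $\mf{t}_{\R}$ and $\mf{r}_{\R}$ are isomorphisms one the inverse of the other") implicitly relies on exactly the functoriality identities $(\mr{Id}\otimes g)\circ(\mr{Id}\otimes h)=\mr{Id}\otimes(g\circ h)$ and $\mr{Id}\otimes\mr{Id}=\mr{Id}$ that you spell out. Your version merely makes the bookkeeping of identifications explicit, which is a harmless elaboration.
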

\begin{proof}
Since $\mf{t}_{\R}$ and $\mf{r}_{\R}$ are isomorphisms one the inverse of the other in the category $\mc{A}(U)-\mr{mod}$.
\end{proof}
\begin{remark}
\label{09261218}
Since Rmk. \ref{09251444int} the gluing lemma via a covering of charts extends to
$\Gamma(U,\mf{T}_{\bullet}^{\bullet}(TM);G,\lambda)$.
\end{remark}
\begin{proposition}
\begin{equation*}
\exists!\,\mf{s}_{G}\in
\mr{Mor}_{\mc{A}(U)-\mr{mod}}\bigl(\Gamma(U,\mf{T}_{\bullet}^{\bullet}(TM);G,\lambda),
\Gamma_{0}(U,G_{\R}\otimes_{\R}\mf{T}_{\bullet}^{\bullet}(TM))\bigr),
\end{equation*}
such that
\begin{equation*}
(\forall f\in\mf{L}_{c}^{1}(U,G,\lambda))(\forall\beta\in\Gamma(U,\mf{T}_{\bullet}^{\bullet}(TM)))
\bigl(\mf{s}_{G}(f\otimes\beta)=(U\ni p\mapsto f(p)\otimes\beta(p))\bigr).
\end{equation*}
\end{proposition}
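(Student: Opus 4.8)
The statement is of exactly the same shape as Prp. \ref{08101431}: the domain $\Gamma(U,\mf{T}_{\bullet}^{\bullet}(TM);G,\lambda)$ is, by Def. \ref{09101743}, the $\mc{A}(U)$-tensor product $\mf{L}_{c}^{1}(U,G,\lambda)\otimes_{\mc{A}(U)}\Gamma(U,\mf{T}_{\bullet}^{\bullet}(TM))$, so the plan is to produce $\mf{s}_{G}$ by $\mc{A}(U)$-linearizing an appropriate $\mc{A}(U)$-bilinear map and then reading off the formula on elementary tensors. Concretely, I would first introduce the fibrewise tensoring assignment
\begin{equation*}
\Theta:\mf{L}_{c}^{1}(U,G,\lambda)\times\Gamma(U,\mf{T}_{\bullet}^{\bullet}(TM))\longrightarrow
\Gamma_{0}(U,G_{\R}\otimes_{\R}\mf{T}_{\bullet}^{\bullet}(TM)),\qquad
\Theta(f,\beta)\coloneqq\bigl(U\ni p\mapsto f(p)\otimes\beta(p)\bigr),
\end{equation*}
where $G_{\R}\otimes_{\R}\mf{T}_{\bullet}^{\bullet}(TM)$ is the bundle whose fibre at $p$ is $G_{\R}\otimes_{\R}\mf{T}_{\bullet}^{\bullet}(T_{p}M)$. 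One must check that $\Theta$ is well defined, i.e. that $p\mapsto f(p)\otimes\beta(p)$ is genuinely a section of that bundle over $U$: this is immediate, since $f(p)$ lies in $G_{\R}$ and $\beta(p)$ lies in the fibre $\mf{T}_{\bullet}^{\bullet}(T_{p}M)$, so the elementary tensor $f(p)\otimes\beta(p)$ lies in the fibre at $p$; no continuity or smoothness needs to be verified because the target is $\Gamma_{0}$, the module of \emph{all} sections.

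Next I would verify that $\Theta$ is $\mc{A}(U)$-bilinear. Additivity in each argument holds pointwise by the $\R$-bilinearity of the tensor product in each fibre. For homogeneity, given $g\in\mc{A}(U)$ one has $g(p)\in\R$ for every $p\in U$, and hence inside the fibre $G_{\R}\otimes_{\R}\mf{T}_{\bullet}^{\bullet}(T_{p}M)$ the identity $(g\,f)(p)\otimes\beta(p)=g(p)\bigl(f(p)\otimes\beta(p)\bigr)=f(p)\otimes(g\,\beta)(p)$ holds; reading this for all $p$ yields $\Theta(g\,f,\beta)=g\cdot\Theta(f,\beta)=\Theta(f,g\,\beta)$, where $\cdot$ denotes the $\mc{A}(U)$-module action on $\Gamma_{0}(U,G_{\R}\otimes_{\R}\mf{T}_{\bullet}^{\bullet}(TM))$, which is fibrewise multiplication by the real scalars $g(p)$. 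This compatibility of the $\mc{A}(U)$-action with the $\R$-balanced tensor product in the fibres — ultimately the fact that $\mc{A}(U)$ is a ring of $\R$-valued functions — is the only point that requires a little attention, and it is precisely what makes the definition of $\Theta$ factor through $\otimes_{\mc{A}(U)}$.

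Finally I would invoke the universal property of the tensor product of modules over the commutative ring $\mc{A}(U)$, exactly as in the proof of Prp. \ref{08101431}: there is a unique
\begin{equation*}
\mf{s}_{G}\in\mr{Mor}_{\mc{A}(U)-\mr{mod}}
\bigl(\mf{L}_{c}^{1}(U,G,\lambda)\otimes_{\mc{A}(U)}\Gamma(U,\mf{T}_{\bullet}^{\bullet}(TM)),\;
\Gamma_{0}(U,G_{\R}\otimes_{\R}\mf{T}_{\bullet}^{\bullet}(TM))\bigr)
\end{equation*}
with $\mf{s}_{G}(f\otimes\beta)=\Theta(f,\beta)$ on every elementary tensor, and this is the asserted defining formula since the domain is that tensor product. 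Uniqueness of $\mf{s}_{G}$ on the whole module is automatic, because the elementary tensors $f\otimes\beta$ generate $\mf{L}_{c}^{1}(U,G,\lambda)\otimes_{\mc{A}(U)}\Gamma(U,\mf{T}_{\bullet}^{\bullet}(TM))$ as a $\Z$-module and $\mf{s}_{G}$ is additive. There is no genuine obstacle here; the entire content of the proof is the well-definedness and $\mc{A}(U)$-bilinearity of $\Theta$ discussed above.
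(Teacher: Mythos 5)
Your proposal is correct and follows exactly the paper's own argument: the paper likewise observes that $(f,\beta)\mapsto(U\ni p\mapsto f(p)\otimes\beta(p))$ is $\mc{A}(U)$-bilinear and then invokes the universal property of the tensor product of modules over a commutative ring. Your additional verification of well-definedness and of the compatibility of the $\mc{A}(U)$-action with the fibrewise tensor product simply spells out details the paper leaves implicit.
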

\begin{proof}
The map $(f,\beta)\mapsto(U\ni p\mapsto f(p)\otimes\beta(p))$ is $\mc{A}(U)$-bilinear thus the statement follows
by the universal property of the tensor product of modules over a commutative ring.  
\end{proof}
Now we are able to define the support as follows
\begin{definition}
[\textbf{Support}]
\label{09161456}
Define 
\begin{equation*}
\begin{cases}  
\mr{supp}:\mf{I}_{\bullet}^{\bullet}(U,M;G,\lambda)\to\mr{Cmp}(M);
\\
\mr{supp}(\theta)\coloneqq\mr{supp}\bigl((\mf{s}_{G}\circ\mf{t}_{G})(\theta)\bigr).
\end{cases}
\end{equation*}
\end{definition}
\begin{convention}
\label{09180844}
We let $\mf{r}$, $\mf{t}$ and $\mf{s}$ denote $\mf{r}_{G}$, $\mf{t}_{G}$ and $\mf{s}_{G}$ respectively whenever it
does not cause confusion.  
\end{convention}
We will employ the next result in order to construct in Prp. \ref{09171005} a vectorial measure 
\begin{proposition}
\label{09170948}
There exists a unique $\mc{A}(U)$-bilinear map $(g,\theta)\mapsto g\cdot\theta$
from $\mc{H}(U,\K)\times\mf{I}_{\bullet}^{\bullet}(U,M;G,\lambda)$ into $\mf{I}_{\bullet}^{\bullet}(U,M;G,\lambda)$
such that for every $g\in\mc{H}(U,\K)$ and every $f\in\mf{L}_{c}^{1}(U,M;G,\lambda)$ and $T\in\mf{T}_{\bullet}^{\bullet}(U,M)$
we have $g\cdot(f\otimes T)=(gf)\otimes T$.
\end{proposition}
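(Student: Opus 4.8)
The plan is to invoke the universal property of the tensor product once more, just as in the proofs of Proposition~\ref{08101431} and the definition of $\mf{s}_{G}$, but this time with respect to a suitable multilinear map and after a preliminary normalization step. First I would observe that multiplication by a scalar map $g$ should be read entirely on the $\mf{L}_{c}^{1}$-factor of the tensor product $\mf{I}_{\bullet}^{\bullet}(U,M;G,\lambda)=\mf{L}_{c}^{1}(U,G,\lambda)\otimes_{\mc{A}(U)}\mf{T}_{\bullet}^{\bullet}(U,M)$ (using the identification of Prp.~\ref{09101648int}). The natural candidate for $g\cdot\theta$ is the image under $\mr{Id}\otimes\mr{Id}$ of the bilinear action, so the real content is that the assignment $(g,f)\mapsto gf$ lands in $\mf{L}_{c}^{1}(U,G,\lambda)$ and is $\mc{A}(U)$-bilinear; then one extends along the second tensor factor. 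Concretely, for fixed $g\in\mc{H}(U,\K)$ I would first check that the map $\mf{L}_{c}^{1}(U,G,\lambda)\to\mf{L}_{c}^{1}(U,G,\lambda)$, $f\mapsto gf$, is well defined: pointwise multiplication by a compactly supported continuous $\K$-valued function preserves the property of being scalarly essentially $\lambda_{\phi(U)}$-integrable in every chart (because $\uppsi\circ(gf)=g\cdot(\uppsi\circ f)$ and multiplying an essentially integrable $\K$-valued map by a continuous compactly supported scalar keeps it essentially integrable), and it preserves compactness of support since $\mr{supp}(gf)\subseteq\mr{supp}(g)\cap\mr{supp}(f)$; moreover this map is $\mc{A}(U)$-linear. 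If $\K=\C$ one reduces to the real case via Lemma~\ref{09121429}.

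Next I would package this as a single $\mc{A}(U)$-bilinear map
\[
\mu:\mc{H}(U,\K)\times\bigl(\mf{L}_{c}^{1}(U,G,\lambda)\otimes_{\mc{A}(U)}\mf{T}_{\bullet}^{\bullet}(U,M)\bigr)
\longrightarrow
\mf{L}_{c}^{1}(U,G,\lambda)\otimes_{\mc{A}(U)}\mf{T}_{\bullet}^{\bullet}(U,M),
\]
defined for fixed $g$ as $(\,f\mapsto gf\,)\otimes\mr{Id}_{\mf{T}_{\bullet}^{\bullet}(U,M)}$, i.e.\ the $\mc{A}(U)$-linear extension of $f\otimes T\mapsto(gf)\otimes T$. $\mc{A}(U)$-linearity in the second slot is immediate since for each $g$ we have an honest $\mc{A}(U)$-module endomorphism; $\mc{A}(U)$-linearity (indeed $\mc{H}(U,\K)$ already carries the $\mc{A}(U)$-module structure) and additivity in the first slot $g$ follow because $f\mapsto(g_{1}+g_{2})f=g_{1}f+g_{2}f$ and $f\mapsto(hg)f=h(gf)$ for $h\in\mc{A}(U)$, and these identities are inherited by the tensor extension. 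The formula $g\cdot(f\otimes T)=(gf)\otimes T$ holds by construction on simple tensors, and simple tensors generate, which gives uniqueness.

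The only mildly delicate point — and the step I expect to be the main obstacle — is verifying that for fixed $g$ the prescription $f\otimes T\mapsto(gf)\otimes T$ actually descends to a well-defined map on the tensor product over $\mc{A}(U)$ (not merely over $\Z$), i.e.\ that $g\cdot$ respects the relations $fh\otimes T=f\otimes hT$ for $h\in\mc{A}(U)$. This is where one uses that $f\mapsto gf$ is $\mc{A}(U)$-linear: $(g(fh))\otimes T=((gf)h)\otimes T=(gf)\otimes(hT)$, which matches the image of $f\otimes(hT)$, namely $(gf)\otimes(hT)$. So the map is the $\mc{A}(U)$-linear extension guaranteed by the universal property of $\otimes_{\mc{A}(U)}$ applied to the $\mc{A}(U)$-bilinear map $\mf{L}_{c}^{1}(U,G,\lambda)\times\mf{T}_{\bullet}^{\bullet}(U,M)\to\mf{L}_{c}^{1}(U,G,\lambda)\otimes_{\mc{A}(U)}\mf{T}_{\bullet}^{\bullet}(U,M)$, $(f,T)\mapsto(gf)\otimes T$. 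Finally, reassembling over $g$ and noting that all constructions are natural in $g$ yields the claimed unique $\mc{A}(U)$-bilinear map on the product $\mc{H}(U,\K)\times\mf{I}_{\bullet}^{\bullet}(U,M;G,\lambda)$; the grading by $(r,s)$ is respected termwise, so nothing new happens in passing to $\mf{I}_{\bullet}^{\bullet}$ from each $\mf{I}_{s}^{r}$.
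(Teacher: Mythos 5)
Your proposal is correct and follows essentially the same route as the paper: for fixed $g$ one applies the universal property of $\otimes_{\mc{A}(U)}$ to the $\mc{A}(U)$-bilinear map $(f,T)\mapsto(gf)\otimes T$ to obtain an endomorphism of $\mf{I}_{\bullet}^{\bullet}(U,M;G,\lambda)$, and then observes that this endomorphism depends $\mc{A}(U)$-linearly on $g$. The only difference is that you spell out the (correct, and implicitly assumed in the paper via the earlier module lemma) verification that $f\mapsto gf$ maps $\mf{L}_{c}^{1}(U,G,\lambda)$ into itself.
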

\begin{proof}
Let $g\in\mc{H}(U,\K)$, thus the map $(f,T)\mapsto(gf)\otimes T$ is $\mc{A}(U)$-bilinear since the $\mc{A}(U)$-module
structure of $\mf{I}_{\bullet}^{\bullet}(U,M;G,\lambda)$, then by the universal property there exists a unique
$\mc{A}(U)$-linear endomorphism $\mf{k}(g)$ of $\mf{I}_{\bullet}^{\bullet}(U,M;G,\lambda)$ such that
$\mf{k}(g)(f\otimes T)=(gf)\otimes T$. Next by the uniqueness characterization present in the universal property we
deduce that $\mf{k}$ is a $\mc{A}(U)$-linear map from $\mc{H}(U,\K)$ into the $\mc{A}(U)$-module
of $\mc{A}(U)$-endomorphisms of $\mf{I}_{\bullet}^{\bullet}(U,M;G,\lambda)$.
Thus the statement follows since the isomorphism in \cite[II.74 Prp. 1(6)]{BourA1} and by the universal property of the
tensor product.
\end{proof}
\begin{definition}
Let $N$ be a differential manifold, $W$ be an open set of $N$ and $F\in\mc{C}^{\infty}(W,U)$
be a \textbf{diffeomorphism}. Define
\begin{equation*}
\begin{aligned}
F^{\ast}:\mf{L}_{c}^{1}(U,G,\lambda)&\to\mf{L}_{c}^{1}(W,G,\lambda),
\\
f&\mapsto f\circ F;
\end{aligned}
\end{equation*}
well-set since the theorem of change of variable in multiple integrals.
\end{definition}
Since $F^{\ast}$ is $\R$-linear we can give the following 
\begin{definition}
[\textbf{Pullback of Integrable Tensors of type $(0,s)$}]
Let $N$ be a differential manifold, $W$ be an open set of $N$ and $F\in\mc{C}^{\infty}(W,U)$
be a \textbf{diffeomorphism}. Define
\begin{equation}
\label{09180901}
\begin{aligned}
\overset{\times}{F}&\in\mr{Mor}_{\R-\mr{mod}}(
\mf{I}_{\bullet}^{0}(U,M;G,\lambda),\mf{I}_{\bullet}^{0}(W,N;G,\lambda))
\\
\overset{\times}{F}&\coloneqq F^{\ast}\otimes F^{\ast};
\end{aligned}
\end{equation}
and
\begin{equation}
\label{09180902}
\begin{aligned}
\overset{\times}{F}&\in\mr{Mor}_{\R-\mr{mod}}(\Gamma(U,\mf{T}_{\bullet}^{0}(TM);G,\lambda),
\Gamma(W,\mf{T}_{\bullet}^{0}(TN);G,\lambda);
\\
\overset{\times}{F}&\coloneqq F^{\ast}\otimes F^{\ast}.
\end{aligned}
\end{equation}
\end{definition}
Next we prepare for the definition of pushforward.
\begin{definition}
Let $\uppsi\in\mc{L}(G,H)$, thus define 
\begin{equation*}
\uppsi_{\ast}:\mf{L}_{c}^{1}(U,G,\lambda)\ni f\mapsto\uppsi\circ f\in\mf{L}_{c}^{1}(U,H,\lambda).
\end{equation*}
\end{definition}
Well-set definition since $\uppsi$ is linear and continuous.
Clearly we have 
\begin{lemma}
\label{08281522int}
Let $\uppsi\in\mc{L}(G,H)$, thus 
$\uppsi_{\ast}\in\mr{Mor}_{\mc{A}(U)-\mr{mod}}(\mf{L}_{c}^{1}(U,G,\lambda),\mf{L}_{c}^{1}(U,H,\lambda))$.
\end{lemma}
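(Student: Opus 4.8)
The plan is to verify directly the two defining properties of an $\mc{A}(U)$-module morphism — additivity and compatibility with the $\mc{A}(U)$-action — by a pointwise argument, since the well-definedness of $\uppsi_{\ast}$ (namely that $\uppsi\circ f\in\mf{L}_{c}^{1}(U,H,\lambda)$ whenever $f\in\mf{L}_{c}^{1}(U,G,\lambda)$) has already been recorded right after its definition.

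First I would recall that the $\mc{A}(U)$-module structure on $\mf{L}_{c}^{1}(U,G,\lambda)$, as on $\mf{L}_{c}^{1}(U,H,\lambda)$, is the pointwise one: for $h\in\mc{A}(U)$ and $f$ in the module, $(hf)(p)=h(p)f(p)$ for $p\in U$, where the real scalar $h(p)$ acts on $f(p)$ through the scalar multiplication of $G$ (resp. $H$), which is available since $\R\subseteq\K$. Then, given $f_{1},f_{2}\in\mf{L}_{c}^{1}(U,G,\lambda)$ and $h_{1},h_{2}\in\mc{A}(U)$, I would evaluate $\uppsi_{\ast}(h_{1}f_{1}+h_{2}f_{2})$ at an arbitrary $p\in U$: it equals $\uppsi\bigl(h_{1}(p)f_{1}(p)+h_{2}(p)f_{2}(p)\bigr)$, and because $\uppsi$ is $\K$-linear and $h_{1}(p),h_{2}(p)\in\R\subseteq\K$, this is $h_{1}(p)\,\uppsi(f_{1}(p))+h_{2}(p)\,\uppsi(f_{2}(p))=\bigl(h_{1}\,\uppsi_{\ast}(f_{1})+h_{2}\,\uppsi_{\ast}(f_{2})\bigr)(p)$. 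Since $p$ was arbitrary, $\uppsi_{\ast}(h_{1}f_{1}+h_{2}f_{2})=h_{1}\,\uppsi_{\ast}(f_{1})+h_{2}\,\uppsi_{\ast}(f_{2})$, which is precisely the claimed membership in $\mr{Mor}_{\mc{A}(U)-\mr{mod}}(\mf{L}_{c}^{1}(U,G,\lambda),\mf{L}_{c}^{1}(U,H,\lambda))$ (additivity being the special case $h_{1}=h_{2}=\un_{U}$).

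There is essentially no obstacle here; the only point worth a remark is that the $\mc{A}(U)$-action uses only the \emph{real} scalar multiplication of $G$ and $H$, so the mere $\K$-linearity of $\uppsi$ already yields the required compatibility, and continuity of $\uppsi$ enters nowhere in this algebraic statement — it was used only to ensure that $\uppsi_{\ast}$ takes values in $\mf{L}_{c}^{1}(U,H,\lambda)$.
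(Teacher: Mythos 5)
Your argument is correct and matches the paper's treatment: the paper records well-definedness immediately after defining $\uppsi_{\ast}$ ("well-set since $\uppsi$ is linear and continuous") and then asserts the morphism property as clear, which is exactly the pointwise $\mc{A}(U)$-linearity check you carry out. Your remark that only the real scalar action and the $\K$-linearity of $\uppsi$ are needed for the algebraic part, with continuity entering only in the well-definedness, is an accurate and slightly more explicit account of what the paper leaves implicit.
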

The above result permits to give the following
\begin{definition}
[\textbf{Pushforward of $G$-Valued Integrable Tensors}]
\label{08281845int}
Let $\uppsi\in\mc{L}(G,H)$, define
\begin{equation*}
\begin{aligned}
\uppsi_{\times}&\in\mr{Mor}_{\mc{A}(U)-\mr{mod}}(\mf{I}_{\bullet}^{\bullet}(U,M;G,\lambda),\mf{I}_{\bullet}^{\bullet}(U,M;H,\lambda));
\\
\uppsi_{\times}&\coloneqq\uppsi_{\ast}\otimes\mr{Id}_{\mf{T}_{\bullet}^{\bullet}(U,M)}.
\end{aligned}
\end{equation*}
\end{definition}
Then easily we find that
\begin{proposition}
[\textbf{Pushforward Commutes with All the Above Operators}]  
\label{09170918}
Let $N$ be a differential manifold, $W$ be an open set of $N$, and $F\in\mc{C}^{\infty}(W,U)$
be a diffeomorphism. If $\uppsi\in\mc{L}(G,H)$, then $\uppsi_{\times}\circ\mf{t}=\mf{t}\circ\uppsi_{\times}$,
$\uppsi_{\times}\circ\mf{r}=\mf{r}\circ\uppsi_{\times}$,
and $\uppsi_{\times}\circ\overset{\times}{F}=\overset{\times}{F}\circ\uppsi_{\times}$;
\end{proposition}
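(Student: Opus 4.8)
The plan is to verify each of the three commutation identities by reducing to elementary tensors, since every module in sight is (by the identifications already fixed) a tensor product over $\mc{A}(U)$ of $\mf{L}_{c}^{1}(U,G,\lambda)$ with a $\R$-valued module, and all the operators involved are, by their very definitions, of the form $\alpha\otimes\beta$ for suitable $\alpha,\beta$. First I would recall that $\uppsi_{\times}=\uppsi_{\ast}\otimes\mr{Id}$ by Def. \ref{08281845int}, while $\mf{t}=\mf{t}_{G}=\mr{Id}_{\mf{L}_{c}^{1}(U,G,\lambda)}\otimes\mf{t}_{\R}$ and $\mf{r}=\mf{r}_{G}=\mr{Id}_{\mf{L}_{c}^{1}(U,G,\lambda)}\otimes\mf{r}_{\R}$ by Def. \ref{09101743}, and $\overset{\times}{F}=F^{\ast}\otimes F^{\ast}$ by \eqref{09180901}. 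Since the functor $-\otimes_{\mc{A}(U)}-$ is bifunctorial, composites of such maps are governed by the interchange law $(\alpha_{1}\otimes\beta_{1})\circ(\alpha_{2}\otimes\beta_{2})=(\alpha_{1}\circ\alpha_{2})\otimes(\beta_{1}\circ\beta_{2})$ whenever the composites make sense; so each identity will follow once the corresponding identities hold for the two tensor factors separately.

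Concretely, for the first identity $\uppsi_{\times}\circ\mf{t}=\mf{t}\circ\uppsi_{\times}$, both sides unwind to maps $\mf{L}_{c}^{1}(U,G,\lambda)\otimes_{\mc{A}(U)}\mf{T}_{\bullet}^{\bullet}(U,M)\to\mf{L}_{c}^{1}(U,H,\lambda)\otimes_{\mc{A}(U)}\Gamma(U,\mf{T}_{\bullet}^{\bullet}(TM))$, and by the interchange law the left side equals $(\uppsi_{\ast}\circ\mr{Id})\otimes(\mr{Id}\circ\mf{t}_{\R})=\uppsi_{\ast}\otimes\mf{t}_{\R}$ while the right side equals $(\mr{Id}\circ\uppsi_{\ast})\otimes(\mf{t}_{\R}\circ\mr{Id})=\uppsi_{\ast}\otimes\mf{t}_{\R}$, so they agree. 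The identity $\uppsi_{\times}\circ\mf{r}=\mf{r}\circ\uppsi_{\times}$ is symmetric, with $\mf{r}_{\R}$ in place of $\mf{t}_{\R}$. For the third identity $\uppsi_{\times}\circ\overset{\times}{F}=\overset{\times}{F}\circ\uppsi_{\times}$ on $\mf{I}_{\bullet}^{0}(U,M;G,\lambda)$, the left side is $(\uppsi_{\ast}\circ F^{\ast})\otimes(\mr{Id}\circ F^{\ast})$ and the right side is $(F^{\ast}\circ\uppsi_{\ast})\otimes(F^{\ast}\circ\mr{Id})$; in the first factor one checks $\uppsi_{\ast}\circ F^{\ast}=F^{\ast}\circ\uppsi_{\ast}$ directly on a map $f$, since $(\uppsi_{\ast}\circ F^{\ast})(f)=\uppsi\circ(f\circ F)=(\uppsi\circ f)\circ F=(F^{\ast}\circ\uppsi_{\ast})(f)$, and the second factor is trivially $F^{\ast}$ both ways. (Here one uses implicitly that $\uppsi_{\ast}$ maps $\mf{L}_{c}^{1}(W,G,\lambda)$ to $\mf{L}_{c}^{1}(W,H,\lambda)$ as in Lem. \ref{08281522int}, so that all the composites are well-defined on the stated modules.) A parallel computation with \eqref{09180902} handles the analogous statement on $\Gamma(U,\mf{T}_{\bullet}^{0}(TM);G,\lambda)$ if needed.

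The only genuinely non-formal point — and the one I would dwell on — is making sure the domains and codomains line up so that the interchange law is applicable: in particular that $\uppsi_{\ast}\otimes\mr{Id}$ is well-defined on each of $\mf{L}_{c}^{1}(U,G,\lambda)\otimes_{\mc{A}(U)}\mf{T}_{\bullet}^{\bullet}(U,M)$ and $\mf{L}_{c}^{1}(U,G,\lambda)\otimes_{\mc{A}(U)}\Gamma(U,\mf{T}_{\bullet}^{\bullet}(TM))$ and $\mf{L}_{c}^{1}(U,G,\lambda)\otimes_{\mc{A}(U)}\mf{T}_{\bullet}^{0}(U,M)$, which is immediate from Lem. \ref{08281522int} and the identifications of Prop. \ref{09101648int}. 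Given that, the whole proposition is a one-line consequence of bifunctoriality of the tensor product applied three times. I expect the main obstacle to be purely notational: carefully exhibiting $\mf{t}$, $\mf{r}$, $\overset{\times}{F}$ and $\uppsi_{\times}$ in the factored form $\alpha\otimes\beta$ with the correct ambient modules, after which no computation remains.

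\begin{proof}
By Convention \ref{09180844}, $\mf{t}$, $\mf{r}$ and $\mf{s}$ abbreviate $\mf{t}_{G}$, $\mf{r}_{G}$ and $\mf{s}_{G}$. Recall that $\mf{t}_{G}=\mr{Id}_{\mf{L}_{c}^{1}(U,G,\lambda)}\otimes\mf{t}_{\R}$ and $\mf{r}_{G}=\mr{Id}_{\mf{L}_{c}^{1}(U,G,\lambda)}\otimes\mf{r}_{\R}$ by Def. \ref{09101743}, that $\uppsi_{\times}=\uppsi_{\ast}\otimes\mr{Id}_{\mf{T}_{\bullet}^{\bullet}(U,M)}$ by Def. \ref{08281845int}, and that $\overset{\times}{F}=F^{\ast}\otimes F^{\ast}$ by \eqref{09180901} and \eqref{09180902}; moreover $\uppsi_{\ast}\in\mr{Mor}_{\mc{A}(U)-\mr{mod}}(\mf{L}_{c}^{1}(U,G,\lambda),\mf{L}_{c}^{1}(U,H,\lambda))$ by Lem. \ref{08281522int}. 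In each of the three cases both composites are, by the bifunctoriality of $-\otimes_{\mc{A}(U)}-$ and the identifications fixed after Prp. \ref{09101648int}, maps of the form $(\alpha_{1}\circ\alpha_{2})\otimes(\beta_{1}\circ\beta_{2})$, so it suffices to check the relevant identities on the two tensor factors separately.

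For $\uppsi_{\times}\circ\mf{t}$ versus $\mf{t}\circ\uppsi_{\times}$, the interchange law gives on the one hand $(\uppsi_{\ast}\circ\mr{Id}_{\mf{L}_{c}^{1}(U,G,\lambda)})\otimes(\mr{Id}_{\mf{T}_{\bullet}^{\bullet}(U,M)}\circ\mf{t}_{\R})$ and on the other hand $(\mr{Id}_{\mf{L}_{c}^{1}(U,H,\lambda)}\circ\uppsi_{\ast})\otimes(\mf{t}_{\R}\circ\mr{Id}_{\mf{T}_{\bullet}^{\bullet}(U,M)})$; both equal $\uppsi_{\ast}\otimes\mf{t}_{\R}$. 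The identity $\uppsi_{\times}\circ\mf{r}=\mf{r}\circ\uppsi_{\times}$ is proved identically with $\mf{r}_{\R}$ in place of $\mf{t}_{\R}$. Finally, for $\uppsi_{\times}\circ\overset{\times}{F}$ versus $\overset{\times}{F}\circ\uppsi_{\times}$ on $\mf{I}_{\bullet}^{0}(U,M;G,\lambda)$, the interchange law gives $(\uppsi_{\ast}\circ F^{\ast})\otimes(\mr{Id}\circ F^{\ast})$ and $(F^{\ast}\circ\uppsi_{\ast})\otimes(F^{\ast}\circ\mr{Id})$; the second tensor factor is $F^{\ast}$ in both, while for the first, for every $f\in\mf{L}_{c}^{1}(U,G,\lambda)$,
\begin{equation*}
(\uppsi_{\ast}\circ F^{\ast})(f)=\uppsi\circ(f\circ F)=(\uppsi\circ f)\circ F=F^{\ast}(\uppsi\circ f)=(F^{\ast}\circ\uppsi_{\ast})(f),
\end{equation*}
so $\uppsi_{\ast}\circ F^{\ast}=F^{\ast}\circ\uppsi_{\ast}$. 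Hence $\uppsi_{\times}\circ\overset{\times}{F}=\overset{\times}{F}\circ\uppsi_{\times}$.
\end{proof}
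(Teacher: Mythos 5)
Your proof is correct and matches the paper's intent: the paper offers no written proof for this proposition (it is introduced with ``Then easily we find that''), and the intended argument is precisely the one you give, namely that each operator is by definition of the form $\alpha\otimes\beta$ so that the identities reduce, via the interchange law for tensor products of module morphisms, to the trivial factorwise identities $\uppsi_{\ast}\circ\mr{Id}=\mr{Id}\circ\uppsi_{\ast}$ and $\uppsi_{\ast}\circ F^{\ast}=F^{\ast}\circ\uppsi_{\ast}$. Your attention to the well-definedness of the composites on the identified modules is a sensible addition and introduces no gap.
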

and that
\begin{proposition}
\label{09211102}
Let $N$ be a differential manifold, $W$ be an open set of $N$, and $F\in\mc{C}^{\infty}(W,U)$
be a diffeomorphism. Thus for every $h\in\mc{A}(U)$ and every $\theta\in\mf{I}_{\bullet}^{0}(U,M;G,\lambda)$ 
we have $\overset{\times}{F}(h\theta)=(F^{\ast}h)\overset{\times}{F}(\theta)$.
\end{proposition}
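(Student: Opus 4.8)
The plan is to reduce the identity $\overset{\times}{F}(h\theta)=(F^{\ast}h)\overset{\times}{F}(\theta)$ to the generating elements of the tensor product and then invoke linearity. First I would note that both sides of the asserted equality, regarded as functions of $\theta$, are $\R$-linear maps from $\mf{I}_{\bullet}^{0}(U,M;G,\lambda)$ into $\mf{I}_{\bullet}^{0}(W,N;G,\lambda)$: the left-hand side because $\overset{\times}{F}$ is $\R$-linear by \eqref{09180901} and $\theta\mapsto h\theta$ is $\R$-linear; the right-hand side because $\overset{\times}{F}$ is $\R$-linear and multiplication by the fixed element $F^{\ast}h\in\mc{A}(W)$ is $\R$-linear. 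Hence it suffices to verify the equality on elements of the form $f\otimes T$ with $f\in\mf{L}_{c}^{1}(U,G,\lambda)$ and $T\in\mf{T}_{\bullet}^{0}(U,M)$, since by Prp.~\ref{09101648int} such elementary tensors generate $\mf{I}_{\bullet}^{0}(U,M;G,\lambda)$ as an $\R$-module (indeed as a $\Z$-module).

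Next I would compute both sides on $\theta=f\otimes T$. By definition of the $\mc{A}(U)$-module structure on the tensor product, $h(f\otimes T)=(hf)\otimes T=f\otimes(hT)$; I would use the first of these and apply $\overset{\times}{F}=F^{\ast}\otimes F^{\ast}$ to get
\begin{equation*}
\overset{\times}{F}(h(f\otimes T))=\overset{\times}{F}((hf)\otimes T)=F^{\ast}(hf)\otimes F^{\ast}(T)=((hf)\circ F)\otimes F^{\ast}(T).
\end{equation*}
On the other side, $\overset{\times}{F}(f\otimes T)=(f\circ F)\otimes F^{\ast}(T)$, so
\begin{equation*}
(F^{\ast}h)\,\overset{\times}{F}(f\otimes T)=(h\circ F)\bigl((f\circ F)\otimes F^{\ast}(T)\bigr)=\bigl((h\circ F)(f\circ F)\bigr)\otimes F^{\ast}(T),
\end{equation*}
where in the last step I use that in the $\mc{A}(W)$-module $\mf{I}_{\bullet}^{0}(W,N;G,\lambda)$ the scalar $F^{\ast}h=h\circ F$ may be absorbed into the first tensor factor $f\circ F\in\mf{L}_{c}^{1}(W,G,\lambda)$. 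Since $(hf)\circ F=(h\circ F)(f\circ F)$ as maps on $W$ (pointwise multiplication commutes with precomposition), the two expressions coincide.

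The only genuinely non-routine point is bookkeeping: I must make sure the scalar $F^{\ast}h$ acts on the correct module, namely on $\mf{I}_{\bullet}^{0}(W,N;G,\lambda)$ viewed as an $\mc{A}(W)$-module, and that $F^{\ast}:\mc{A}(U)\to\mc{A}(W)$ really is a unital ring homomorphism so that the two module structures match up under $\overset{\times}{F}$; both are immediate since $F$ is a diffeomorphism and precomposition with $F$ preserves products and the constant $\un$. There is no topological subtlety here because only integrable tensors of type $(0,s)$ are involved, for which $\overset{\times}{F}=F^{\ast}\otimes F^{\ast}$ is already built from the linear change-of-variable maps; no projective tensor product enters. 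Once the elementary-tensor case is checked, the general case follows by the linearity reduction of the first paragraph, completing the proof.
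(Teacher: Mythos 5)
Your proposal is correct and is exactly the routine verification the paper has in mind: the paper states this proposition without proof, introducing it with ``easily we find that,'' and the intended argument is precisely your reduction to elementary tensors $f\otimes T$ followed by the computation $F^{\ast}(hf)=(F^{\ast}h)(F^{\ast}f)$ together with absorption of the scalar into the first factor of the $\mc{A}(W)$-module tensor product. Nothing further is needed.
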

\begin{corollary}
\label{09170919}
Assume $\K=\C$.
Let $N$ be a differential manifold, $W$ be an open set of $N$ and $F\in\mc{C}^{\infty}(W,U)$
be a diffeomorphism.
If $\{G_{j}\}_{j\in J}$ is a family of \textbf{real} locally convex spaces and $G$ is such that
$G_{\R}=\prod_{j\in J}G_{j}$ provided with the product topology.
Thus for every $j\in J$ we have that
$\Pr^{j}_{\times}\circ\mf{t}=\mf{t}\circ\Pr^{j}_{\times}$,
$\Pr^{j}_{\times}\circ\mf{r}=\mf{r}\circ\Pr^{j}_{\times}$,
and $\Pr^{j}_{\times}\circ\overset{\times}{F}=\overset{\times}{F}\circ\Pr^{j}_{\times}$.
\end{corollary}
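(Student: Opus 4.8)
The plan is to deduce all three identities from Proposition~\ref{09170918}, read in the \emph{real} category: each of the asserted equalities is literally the statement of Proposition~\ref{09170918} when the continuous linear map occurring there is the coordinate projection $\Pr^{j}$. Two things need to be checked along the way. First, that $\Pr^{j}_{\times}$ is a genuine operator on $\mf{I}_{\bullet}^{\bullet}(U,M;G,\lambda)$, even though $\Pr^{j}$ is merely $\R$-linear, being a projection of the \textbf{real} product $\prod_{j\in J}G_{j}$. Second, that the operators $\mf{t}$, $\mf{r}$, $\overset{\times}{F}$ attached to $G$ coincide with those attached to $G_{\R}$, so that passing to the real category alters nothing.

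For the second point I would use the hypothesis $\K=\C$ and Lemma~\ref{09121429}, which yields $\mf{L}^{1}(U,G,\lambda)=\mf{L}^{1}(U,G_{\R},\lambda)$ as $\mc{A}(U)$-modules, hence $\mf{L}_{c}^{1}(U,G,\lambda)=\mf{L}_{c}^{1}(U,G_{\R},\lambda)$, and therefore both $\mf{I}_{\bullet}^{\bullet}(U,M;G,\lambda)=\mf{I}_{\bullet}^{\bullet}(U,M;G_{\R},\lambda)$ and $\Gamma(U,\mf{T}_{\bullet}^{\bullet}(TM);G,\lambda)=\Gamma(U,\mf{T}_{\bullet}^{\bullet}(TM);G_{\R},\lambda)$. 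Since by Definition~\ref{09101743} one has $\mf{t}_{G}=\mr{Id}_{\mf{L}_{c}^{1}(U,G,\lambda)}\otimes\mf{t}_{\R}$ and $\mf{r}_{G}=\mr{Id}_{\mf{L}_{c}^{1}(U,G,\lambda)}\otimes\mf{r}_{\R}$, while $\overset{\times}{F}$ is $F^{\ast}\otimes F^{\ast}$ with first factor the map $f\mapsto f\circ F$ on $\mf{L}_{c}^{1}(U,G,\lambda)$, all three operators depend only on the common real $\mc{A}(U)$-module $\mf{L}_{c}^{1}(U,G,\lambda)=\mf{L}_{c}^{1}(U,G_{\R},\lambda)$ and hence are literally equal to their $G_{\R}$-counterparts.

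For the first point, fix $j\in J$; since $G_{\R}=\prod_{j\in J}G_{j}$ carries the product topology, the projection satisfies $\Pr^{j}\in\mc{L}(G_{\R},G_{j})$, so Lemma~\ref{08281522int} and Definition~\ref{08281845int}, applied with $\K=\R$ and with $G$, $H$ replaced by $G_{\R}$, $G_{j}$, produce $\Pr^{j}_{\times}=\Pr^{j}_{\ast}\otimes\mr{Id}_{\mf{T}_{\bullet}^{\bullet}(U,M)}$ as an $\mc{A}(U)$-module morphism from $\mf{I}_{\bullet}^{\bullet}(U,M;G_{\R},\lambda)=\mf{I}_{\bullet}^{\bullet}(U,M;G,\lambda)$ into $\mf{I}_{\bullet}^{\bullet}(U,M;G_{j},\lambda)$, which is exactly the operator named in the statement. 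With the identifications of the previous paragraph in force, the three displayed equalities are then precisely Proposition~\ref{09170918} specialized to $\uppsi=\Pr^{j}$, and the corollary follows. I expect the only delicate step to be this bookkeeping itself: recognizing that Lemma~\ref{09121429} lets one slide from the complex picture, where $\Pr^{j}$ is not a morphism of the pertinent structures, to the real picture, where it is, without disturbing $\mf{t}$, $\mf{r}$ or $\overset{\times}{F}$; once that is granted, the statement is a direct citation.
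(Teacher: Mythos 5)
Your proposal is correct and follows essentially the same route as the paper: the paper's proof likewise observes that $\Pr^{j}\in\mc{L}(G_{\R},G_{j})$ and then invokes Proposition~\ref{09170918} with $\K=\R$, $G$ replaced by $G_{\R}$ and $\uppsi$ replaced by $\Pr^{j}$. Your explicit appeal to Lemma~\ref{09121429} to identify the $G$- and $G_{\R}$-modules (and hence the operators $\mf{t}$, $\mf{r}$, $\overset{\times}{F}$) merely spells out the bookkeeping the paper leaves implicit in saying the statement is well-set.
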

\begin{proof}
$\Pr^{j}\in\mc{L}(G_{\R},G_{j})$ and the product topology is locally convex as a particular case of what stated in
\cite[II.5]{EVT}. Thus the statement is well-set and it follows since Prp. \ref{09170918} applied
to $\K=\R$, to $G$ replaced by $G_{\R}$ and to $\uppsi$ replaced by $\Pr^{j}$.
\end{proof}
\begin{definition}
[\textbf{$G$-Valued Smooth Tensor fields at $M$ defined on $U$}]
Let $r,s\in\Z_{+}$, define the $\mc{A}(U)$-module of $G$-valued differential tensor
fields at $M$ defined on $U$ of type $(r,s)$ to be  
\begin{equation*}
\mf{T}_{s}^{r}(U,M;G)\coloneqq\mc{A}(U,G)\otimes_{\mc{A}(U)}\mf{T}_{s}^{r}(U,M).
\end{equation*}
Next we define the $\mc{A}(U)$-module 
\begin{equation*}
\ms{T}_{s}^{r}(U,M;G)\coloneqq\mf{T}_{s}^{r}\bigl(\Gamma(U,TM),\mc{A}(U;G)\bigr).
\end{equation*}
Finally define the $\mc{A}(U)$-modules 
\begin{equation*}
\begin{aligned}  
\mf{T}_{\bullet}^{\bullet}(U,M;G)\coloneqq\bigoplus_{(r,s)\in\Z_{+}\times\Z_{+}}\mf{T}_{s}^{r}(U,M;G);
\\
\ms{T}_{\bullet}^{\bullet}(U,M;G)\coloneqq\bigoplus_{(r,s)\in\Z_{+}\times\Z_{+}}\ms{T}_{s}^{r}(U,M;G).
\end{aligned}
\end{equation*}
\end{definition}
\begin{definition}
Let $r,s\in\Z_{+}$, define the $\mc{A}(U)$-modules
\begin{equation*}
\begin{aligned}
\mf{T}_{s}^{r}(U,M;G)_{[c]}&\coloneqq\mc{A}_{c}(U,G)\otimes_{\mc{A}(U)}\mf{T}_{s}^{r}(U,M);
\\
\mf{T}_{s}^{r}(U,M;G)_{c}&\coloneqq\mc{A}(U,G)\otimes_{\mc{A}(U)}\mf{T}_{s}^{r}(U,M)_{c};
\\
\mf{T}_{s}^{r}(U,M;G)^{c}&\coloneqq\mc{A}(U,G)\otimes_{\mc{A}(U)}\mf{T}_{s}^{r}(U,M)^{c}.
\end{aligned}
\end{equation*}
\end{definition}
\begin{remark}
\label{09161024}
Clearly $\mf{T}_{s}^{r}(U,M;G)^{c}$ is $\mc{A}(U)$-isomorphic to a submodule of $\mf{T}_{s}^{r}(U,M;G)$ and in what follows
we shall identify these two modules. Similarly we identify $\mf{T}_{s}^{r}(U,M;G)_{c}$
(respectively $\mf{T}_{s}^{r}(U,M;G)_{[c]}$) with a submodule of $\mf{T}_{s}^{r}(U,M;G)$,
in particular we have $\mf{T}_{s}^{r}(U,M;G)_{c}\subset\mf{T}_{s}^{r}(U,M;G)^{c}$.
\end{remark}
\begin{proposition}
\label{09161202}
Let $r,s\in\Z_{+}$, thus $\mf{T}_{s}^{r}(U,M;G)_{[c]}=\mf{T}_{s}^{r}(U,M;G)_{c}$.
\end{proposition}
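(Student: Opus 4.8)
The plan is to prove $\mf{T}_{s}^{r}(U,M;G)_{[c]}=\mf{T}_{s}^{r}(U,M;G)_{c}$ by showing both modules coincide, as submodules of $\mf{T}_{s}^{r}(U,M;G)$, via a symmetric argument exploiting a smooth bump function. Recall $\mf{T}_{s}^{r}(U,M;G)_{[c]}=\mc{A}_{c}(U,G)\otimes_{\mc{A}(U)}\mf{T}_{s}^{r}(U,M)$ and $\mf{T}_{s}^{r}(U,M;G)_{c}=\mc{A}(U,G)\otimes_{\mc{A}(U)}\mf{T}_{s}^{r}(U,M)_{c}$, both viewed inside $\mf{T}_{s}^{r}(U,M;G)$ under the identifications of Rmk.~\ref{09161024}. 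The key observation is the same trick used in Prp.~\ref{09161155} and in the proof that $\mf{L}_{c}^{1}(M,G,\lambda)$ is an $\mc{A}(M)$-module: multiplying by a smooth bump function lets one transfer the compact-support condition from one tensor factor to the other.

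First I would establish the inclusion $\mf{T}_{s}^{r}(U,M;G)_{[c]}\subseteq\mf{T}_{s}^{r}(U,M;G)_{c}$. Take a generator $f\otimes T$ with $f\in\mc{A}_{c}(U,G)$ and $T\in\mf{T}_{s}^{r}(U,M)$. Let $\psi:U\to\R$ be a smooth bump function for $\mr{supp}(f)$ supported in $U$ (so $\psi\in\mc{A}_{c}(U)\subseteq\mc{A}(U)$ and $f=\psi f$). Then $f\otimes T=(\psi f)\otimes T=f\otimes(\psi T)$, and $\psi T\in\mf{T}_{s}^{r}(U,M)_{c}$ since $\mr{supp}(\mf{t}_{\R}(\psi T))\subseteq\mr{supp}(\psi)$ is compact in $M$. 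Hence $f\otimes T\in\mc{A}(U,G)\otimes_{\mc{A}(U)}\mf{T}_{s}^{r}(U,M)_{c}=\mf{T}_{s}^{r}(U,M;G)_{c}$. Since the generators span, the inclusion follows.

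Next I would prove the reverse inclusion $\mf{T}_{s}^{r}(U,M;G)_{c}\subseteq\mf{T}_{s}^{r}(U,M;G)_{[c]}$ by the mirror-image argument: take a generator $g\otimes S$ with $g\in\mc{A}(U,G)$ and $S\in\mf{T}_{s}^{r}(U,M)_{c}$, pick a smooth bump function $\chi:U\to\R$ for $\mr{supp}(\mf{t}_{\R}(S))$ supported in $U$ so that $S=\chi S$ (this uses that $\mr{supp}(\mf{t}_{\R}(S))\in\mr{Cmp}(M)$ is contained in $U$, which is part of the definition of $\mf{T}_{s}^{r}(U,M)_{c}$—one must check $\chi$ can be taken with compact support in $U$, which holds because the support is a compact subset of the open set $U$). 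Then $g\otimes S=g\otimes(\chi S)=(\chi g)\otimes S$, and $\chi g\in\mc{A}_{c}(U,G)$ since $\mr{supp}(\chi g)\subseteq\mr{supp}(\chi)$ is compact. Therefore $g\otimes S\in\mf{T}_{s}^{r}(U,M;G)_{[c]}$, and again the inclusion extends to the whole module by linearity.

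The main subtlety—rather than a genuine obstacle—is keeping the identifications of Rmk.~\ref{09161024} consistent: one has to argue that the two chains of equalities $f\otimes T=f\otimes(\psi T)$ and $g\otimes S=(\chi g)\otimes S$ take place inside the common ambient module $\mf{T}_{s}^{r}(U,M;G)=\mc{A}(U,G)\otimes_{\mc{A}(U)}\mf{T}_{s}^{r}(U,M)$, where they are literally instances of $\mc{A}(U)$-bilinearity of $\otimes$ (moving the scalar $\psi$, resp.\ $\chi$, across the tensor sign). Once the ambient identifications are fixed this is immediate, exactly as in Prp.~\ref{09161155}. No deeper machinery is needed; the proof is a two-line bump-function computation in each direction.
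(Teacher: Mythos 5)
Your proof is correct and is essentially identical to the paper's: both directions are handled by the same bump-function trick ($f\otimes T=f\otimes(\psi T)$ for one inclusion and $g\otimes S=(\chi g)\otimes S$ for the other), concluding via the identifications of Rmk.~\ref{09161024}. Your version merely spells out the support checks that the paper leaves implicit.
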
  
\begin{proof}
If $f\in\mc{A}_{c}(U,G)$ and $T\in\mf{T}_{s}^{r}(U,M)$
and $\psi$ is a smooth bump function for $\mr{supp}(f)$ supported in $U$, then $f=\psi f$, so
$f\otimes T=(\psi f)\otimes T=f\otimes(\psi T)$. 
If $g\in\mc{A}(U,G)$ and $S\in\mf{T}_{s}^{r}(U,M)_{c}$
and $\psi$ is a smooth bump function for $\mr{supp}(S)$ supported in $U$, then $S=\psi S$, thus
$g\otimes S=g\otimes(\psi S)=(\psi g)\otimes S$. 
Thus the statement follows since Rmk. \ref{09161024}.
\end{proof}
\begin{remark}
\label{10041224}
$\mf{T}_{s}^{r}(U,M;G)=\mf{T}_{s}^{r}(U,M;G_{\R})$ since $\mc{A}(U,G)=\mc{A}(U,G_{\R})$.
\end{remark}  
\begin{proposition}
\label{09101648}
$\mf{T}_{\bullet}^{\bullet}(U,M;G)$ is isomorphic to $\mc{A}(U,G)\otimes_{\mc{A}(U)}\mf{T}_{\bullet}^{\bullet}(U,M)$ in the
category $\mc{A}(U)-\mr{mod}$.
\end{proposition}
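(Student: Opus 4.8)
The proposition asserts that $\mf{T}_{\bullet}^{\bullet}(U,M;G)$, defined as $\bigoplus_{(r,s)}\bigl(\mc{A}(U,G)\otimes_{\mc{A}(U)}\mf{T}_{s}^{r}(U,M)\bigr)$, is $\mc{A}(U)$-isomorphic to $\mc{A}(U,G)\otimes_{\mc{A}(U)}\mf{T}_{\bullet}^{\bullet}(U,M)$, where $\mf{T}_{\bullet}^{\bullet}(U,M)=\bigoplus_{(r,s)}\mf{T}_{s}^{r}(U,M)$. This is precisely the statement that tensor product commutes with (arbitrary) direct sums in the first variable held fixed, applied over the commutative ring $\mc{A}(U)$. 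So the approach is to invoke the canonical distributivity isomorphism for tensor products over a commutative ring, exactly as was done for the integrable analogue in Prp. \ref{09101648int}. The relevant citation is \cite[II.61 Prp. 7]{BourA1} (the same one used in Prp. \ref{09101648int}), which gives a canonical $\Z$-linear — hence, by the definition of the module structure on a tensor product over a commutative ring, canonically $\mc{A}(U)$-linear — isomorphism between $M\otimes\bigl(\bigoplus_i N_i\bigr)$ and $\bigoplus_i(M\otimes N_i)$.

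First I would take $M=\mc{A}(U,G)$ and the family $\{\mf{T}_{s}^{r}(U,M)\}_{(r,s)\in\Z_{+}\times\Z_{+}}$ of $\mc{A}(U)$-modules, and apply the cited Bourbaki result to produce the canonical $\Z$-linear isomorphism
\begin{equation*}
\mc{A}(U,G)\otimes_{\mc{A}(U)}\Bigl(\bigoplus_{(r,s)}\mf{T}_{s}^{r}(U,M)\Bigr)
\xrightarrow{\ \sim\ }
\bigoplus_{(r,s)}\bigl(\mc{A}(U,G)\otimes_{\mc{A}(U)}\mf{T}_{s}^{r}(U,M)\bigr).
\end{equation*}
Then I would observe, as in the proof of Prp. \ref{09101648int}, that this canonical map is automatically $\mc{A}(U)$-linear because the $\mc{A}(U)$-action on each tensor product is induced from either tensor factor and the isomorphism is natural in both factors; hence it is an isomorphism in $\mc{A}(U)-\mr{mod}$. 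Finally, unravelling the definitions, the left-hand side is $\mc{A}(U,G)\otimes_{\mc{A}(U)}\mf{T}_{\bullet}^{\bullet}(U,M)$ and the right-hand side is $\mf{T}_{\bullet}^{\bullet}(U,M;G)$, which is the claim.

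**Main obstacle.** There is essentially no obstacle here: the proposition is a direct transcription of Prp. \ref{09101648int} with $\mf{L}_{c}^{1}(U,G,\lambda)$ replaced by $\mc{A}(U,G)$, and the proof is the one-line citation argument already used there. The only point requiring the least care is making sure the direct sum in the definition of $\mf{T}_{\bullet}^{\bullet}(U,M;G)$ — a direct sum of the modules $\mf{T}_{s}^{r}(U,M;G)=\mc{A}(U,G)\otimes_{\mc{A}(U)}\mf{T}_{s}^{r}(U,M)$ — is matched correctly termwise with the distributed tensor product, but this is immediate from the construction. I would therefore simply write: by \cite[II.61 Prp. 7]{BourA1} there is a canonical $\Z$-linear isomorphism, which is an $\mc{A}(U)-\mr{mod}$ isomorphism by the definition of the module structure of the tensor product of modules over a commutative ring, exactly as in the proof of Prp. \ref{09101648int}.
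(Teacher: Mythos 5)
Your proposal is correct and coincides with the paper's own proof: the paper likewise cites \cite[II.61 Prp. 7]{BourA1} for the canonical $\Z$-linear distributivity isomorphism between the tensor product and the direct sum, and then notes it is an $\mc{A}(U)-\mr{mod}$ isomorphism by the definition of the module structure on the tensor product over a commutative ring. No differences worth noting.
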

\begin{proof}
Since \cite[II.61Prp. 7]{BourA1} there exists a (canonical) $\Z$-linear isomorphism, which is clearly
a $\mc{A}(U)-\mr{mod}$ isomorphism by the definition of the module structure of the tensor product of modules over a
commutative ring.
\end{proof}
\begin{proposition}
\label{08301601}
\begin{multline}
\left(\exists\,!\Uppsi\in
\mr{Mor}_{\mc{A}(U)-\mr{mod}}\left(\mf{T}_{\bullet}^{\bullet}(U,M;G),\ms{T}_{\bullet}^{\bullet}(U,M;G)\right)\right)
(\forall(r,s)\in\Z_{+}\times\Z_{+})
\\
(\forall f\in\mc{A}(U,G))
(\forall T\in\mf{T}_{s}^{r}(U,M))
(\forall(\theta_{1},\dots,\theta_{r},X_{1},\dots,X_{s})\in\prod[\Gamma(U,TM),r,s])
\\
\bigl(\Uppsi(f\otimes T)
(\theta_{1},\dots,\theta_{r},X_{1},\dots,X_{s})=
T(\theta_{1},\dots,\theta_{r},X_{1},\dots,X_{s})\cdot f\bigr).
\end{multline}
\end{proposition}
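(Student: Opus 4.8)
The plan is to mimic exactly the proof of Proposition \ref{08101431}, since the present statement is its smooth (rather than integrable) analogue: the module $\mf{L}_{c}^{1}(U,G,\lambda)$ is replaced by $\mc{A}(U,G)$ and $\ms{I}_{s}^{r}$ by $\ms{T}_{s}^{r}$, but the mechanism is identical. Concretely, I would first fix $(r,s)\in\Z_{+}\times\Z_{+}$ and define a map
\begin{equation*}
\ast:\mc{A}(U,G)\times\mf{T}_{s}^{r}(U,M)\to\ms{T}_{s}^{r}(U,M;G),\qquad (f,T)\mapsto f\ast T,
\end{equation*}
where $(f\ast T)(\theta_{1},\dots,\theta_{r},X_{1},\dots,X_{s})\coloneqq T(\theta_{1},\dots,\theta_{r},X_{1},\dots,X_{s})\cdot f$. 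The first thing to check is that $f\ast T$ really lies in $\ms{T}_{s}^{r}(U,M;G)=\mf{T}_{s}^{r}(\Gamma(U,TM),\mc{A}(U;G))$, i.e. that it is $\mc{A}(U)$-multilinear as a function of $(\theta_{1},\dots,X_{s})$; this is immediate since $T$ is $\mc{A}(U)$-multilinear and multiplication by the fixed element $f\in\mc{A}(U,G)$ commutes with the $\mc{A}(U)$-action on $\mc{A}(U,G)$ (the latter being the pointwise product). One also needs that the product $h\cdot f$ of $h\in\mc{A}(U)$ with $f\in\mc{A}(U,G)$ again lies in $\mc{A}(U,G)$, which is standard and used implicitly throughout the paper.

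Next I would verify that $\ast$ is $\mc{A}(U)$-bilinear. Additivity in each slot is clear from the definition; for the $\mc{A}(U)$-homogeneity, given $h\in\mc{A}(U)$ we have $(h f)\ast T=f\ast(hT)=h(f\ast T)$, because on each tuple of arguments all three sides evaluate to $h\cdot T(\theta_{1},\dots,X_{s})\cdot f$, using that $\mc{A}(U)$ is commutative and that the $\mc{A}(U)$-module structure on $\ms{T}_{s}^{r}(U,M;G)$ is the pointwise one on $\mc{A}(U,G)$. By the universal property of the tensor product over the commutative ring $\mc{A}(U)$ (the same tool invoked in Prp. \ref{08101431}), there is a unique $\mc{A}(U)$-linear map
\begin{equation*}
\Uppsi_{r,s}:\mf{T}_{s}^{r}(U,M;G)=\mc{A}(U,G)\otimes_{\mc{A}(U)}\mf{T}_{s}^{r}(U,M)\to\ms{T}_{s}^{r}(U,M;G)
\end{equation*}
with $\Uppsi_{r,s}(f\otimes T)=f\ast T$, which is exactly the displayed evaluation formula.

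Finally I would assemble the graded map: using Prp. \ref{09101648} to identify $\mf{T}_{\bullet}^{\bullet}(U,M;G)$ with $\mc{A}(U,G)\otimes_{\mc{A}(U)}\mf{T}_{\bullet}^{\bullet}(U,M)=\bigoplus_{(r,s)}\mf{T}_{s}^{r}(U,M;G)$, set $\Uppsi\coloneqq\bigoplus_{(r,s)\in\Z_{+}\times\Z_{+}}\Uppsi_{r,s}$, which is $\mc{A}(U)$-linear into $\ms{T}_{\bullet}^{\bullet}(U,M;G)=\bigoplus_{(r,s)}\ms{T}_{s}^{r}(U,M;G)$. Uniqueness of $\Uppsi$ follows because the elements $f\otimes T$ with $f\in\mc{A}(U,G)$, $T\in\mf{T}_{s}^{r}(U,M)$, $(r,s)\in\Z_{+}\times\Z_{+}$, generate $\mf{T}_{\bullet}^{\bullet}(U,M;G)$ as a $\Z$-module, and on such generators $\Uppsi$ is forced by the stated formula. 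I do not expect a genuine obstacle here: the only points requiring a word of care are the closure of $\mc{A}(U,G)$ under multiplication by $\mc{A}(U)$ and the bookkeeping that the bilinear map lands in $\ms{T}_{s}^{r}(U,M;G)$ with the correct module structure so that the universal property applies; both are routine, and the proof can in fact be stated in a single sentence exactly as in Prp. \ref{08101431}.
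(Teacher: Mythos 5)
Your proposal is correct and follows exactly the paper's own argument: the author likewise defines the $\mc{A}(U)$-bilinear map $(f,T)\mapsto f\star T$ with $(f\star T)(\theta_{1},\dots,X_{s})=T(\theta_{1},\dots,X_{s})\cdot f$ and invokes the universal property of the tensor product over a commutative ring, exactly as in Prp.~\ref{08101431}. The extra verifications you spell out (well-definedness, bilinearity, assembly over the grading) are left implicit in the paper but are the same routine steps.
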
  
\begin{proof}
By the universal property of the tensor product over a commutative ring applied to the $\mc{A}(U)$-bilinear map
$\star:\mc{A}(U,G)\times\mf{T}_{s}^{r}(U,M)\to\ms{T}_{s}^{r}(U,M;G)$, $(f,T)\mapsto f\star T$, defined by
$(f\star T)(\theta_{1},\dots,\theta_{r},X_{1},\dots,X_{s})=T(\theta_{1},\dots,\theta_{r},X_{1},\dots,X_{s})\cdot f$.
\end{proof}
The following result justifies the choice of the above definition.
\begin{corollary}
[\textbf{Unique Decomposition of $G$-Valued Smooth Tensor fields at $M$ defined on a Chart}]
\label{10011804}
Let $(U,\phi)$ be a chart of $M$, $r,s\in\Z_{+}$ and $\mr{T}\in\mf{T}_{s}^{r}(U,M;G)$, thus 
\begin{equation*}
\bigl(\exists\,!f:\Xi(b^{r,s,\phi})\to\mc{A}(U,G)\bigr)
\left(\mr{T}=\sum_{\alpha\in\Xi(b^{r,s,\phi})}f_{\alpha}\otimes(\otimes(b^{r,s,\phi})^{\ast})_{\alpha}\right).
\end{equation*}
\end{corollary}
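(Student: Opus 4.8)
The plan is to reduce the statement to the already-established facts about tensor products with free modules, exactly as was done for the integrable case in Cor.~\ref{10011800}. First I would recall that $\{(\otimes(b^{r,s,\phi})^{\ast})_{\alpha}\,\vert\,\alpha\in\Xi(b^{r,s,\phi})\}$ is, by the conventions set up in the Notation section, a basis of the $\mc{A}(U)$-module $\mf{T}_{s}^{r}(U,M)$ whenever $(U,\phi)$ is a chart of $M$; this is the image under $\mf{r}_{\R}$ of the standard chart-associated basis of $\Gamma(U,\mf{T}_{s}^{r}(TM))$, and $\Gamma(U,\mf{T}_{s}^{r}(TM))$ is free over $\mc{A}(U)$ since $U$ is the domain of a chart. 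Hence $\mf{T}_{s}^{r}(U,M)$ is a free $\mc{A}(U)$-module with the displayed basis.

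Next I would invoke the general algebraic fact, already used in the proof of Cor.~\ref{10011800} via \cite[II.62 Cor.~1]{BourA1}, that for any commutative ring $A$, any $A$-module $B$, and any free $A$-module $C$ with basis $(c_\alpha)_{\alpha\in\Lambda}$, every element of $B\otimes_{A}C$ can be written \emph{uniquely} in the form $\sum_{\alpha\in\Lambda} b_\alpha\otimes c_\alpha$ with $b_\alpha\in B$ (and only finitely many $b_\alpha$ nonzero). Applying this with $A=\mc{A}(U)$, $B=\mc{A}(U,G)$, $C=\mf{T}_{s}^{r}(U,M)$, and $(c_\alpha)_\alpha = ((\otimes(b^{r,s,\phi})^{\ast})_{\alpha})_{\alpha\in\Xi(b^{r,s,\phi})}$ yields immediately that every $\mr{T}\in\mf{T}_{s}^{r}(U,M;G) = \mc{A}(U,G)\otimes_{\mc{A}(U)}\mf{T}_{s}^{r}(U,M)$ has the asserted unique expansion, where the coefficient map is $f:\Xi(b^{r,s,\phi})\to\mc{A}(U,G)$, $\alpha\mapsto f_\alpha$.

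There is essentially no obstacle here: the proof is a verbatim transcription of the proof of Cor.~\ref{10011800} with $\mf{L}_{c}^{1}(U,G,\lambda)$ replaced by $\mc{A}(U,G)$, since the only properties used are the freeness of $\mf{T}_{s}^{r}(U,M)$ and the cited module-theoretic lemma on tensoring a module with a free module. The one point worth a word is to note that no compact-support subtlety intervenes (unlike in Prop.~\ref{09161202} or Prop.~\ref{09161155}), because here we are tensoring the full module $\mc{A}(U,G)$ with the full module $\mf{T}_{s}^{r}(U,M)$, so the basis is used exactly as it stands. Thus the proof reads simply: $\{(\otimes(b^{r,s,\phi})^{\ast})_{\alpha}\,\vert\,\alpha\in\Xi(b^{r,s,\phi})\}$ is a basis of $\mf{T}_{s}^{r}(U,M)$, hence the statement follows from \cite[II.62 Cor.~1]{BourA1}.
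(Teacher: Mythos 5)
Your proof is correct and follows exactly the paper's own argument: note that $\{(\otimes(b^{r,s,\phi})^{\ast})_{\alpha}\,\vert\,\alpha\in\Xi(b^{r,s,\phi})\}$ is a basis of the free $\mc{A}(U)$-module $\mf{T}_{s}^{r}(U,M)$ and then apply \cite[II.62 Cor.~1]{BourA1} on tensoring with a free module. The extra remarks on freeness over a chart domain and on the absence of compact-support subtleties are accurate but not needed beyond what the paper itself records.
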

\begin{proof}
$\{(\otimes(b^{r,s,\phi})^{\ast})_{\alpha}\,\vert\,\alpha\in\Xi(b^{r,s,\phi})\}$ is a basis of $\mf{T}_{s}^{r}(U,M)$,
thus the statement follows since \cite[II.62 Cor.1]{BourA1}.
\end{proof}  
\begin{definition}
[\textbf{Bar Operators on Smooth Tensor Fields}]  
\label{09180944}
Define the $\mc{A}(U)$-module 
\begin{equation*}
\Gamma(U,\mf{T}_{\bullet}^{\bullet}(TM);G)\coloneqq\mc{A}(U,G)\otimes_{\mc{A}(U)}\Gamma(U,\mf{T}_{\bullet}^{\bullet}(TM)).
\end{equation*}
Define with abuse of language the following maps
\begin{equation*}
\begin{aligned}
\mf{t}_{G}&\in\mr{Mor}_{\mc{A}(U)-\mr{mod}}\bigl(\mf{T}_{\bullet}^{\bullet}(U,M;G),
\Gamma(U,\mf{T}_{\bullet}^{\bullet}(TM);G)\bigr),
\\
\mf{t}_{G}&\coloneqq\mr{Id}_{\mc{A}(U,G)}\otimes\mf{t}_{\R};
\end{aligned}
\end{equation*}
and 
\begin{equation*}
\begin{aligned}
\mf{r}_{G}&\in
\mr{Mor}_{\mc{A}(U)-\mr{mod}}
\bigl(\Gamma(U,\mf{T}_{\bullet}^{\bullet}(TM);G),\mf{T}_{\bullet}^{\bullet}(U,M;G)\bigr);
\\
\mf{r}_{G}&\coloneqq\mr{Id}_{\mc{A}(U,G)}\otimes\mf{r}_{\R}.
\end{aligned}
\end{equation*}
\end{definition}
\begin{proposition}
\label{08281918}
$\mf{t}_{G}$ and $\mf{r}_{G}$ are isomorphisms one the inverse of the other in the category $\mc{A}(U)-\mr{mod}$.
\end{proposition}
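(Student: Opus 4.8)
The plan is to mimic verbatim the argument of Prp. \ref{08281918int}, which is the exact analogue in the scalarly essentially integrable setting. Recall that by definition $\mf{t}_{G}=\mr{Id}_{\mc{A}(U,G)}\otimes\mf{t}_{\R}$ and $\mf{r}_{G}=\mr{Id}_{\mc{A}(U,G)}\otimes\mf{r}_{\R}$, each viewed as an $\mc{A}(U)$-linear map between the relevant tensor products over $\mc{A}(U)$. Since $\mf{t}_{\R}$ and $\mf{r}_{\R}$ are mutually inverse $\mc{A}(U)$-module isomorphisms between $\mf{T}_{\bullet}^{\bullet}(U,M)$ and $\Gamma(U,\mf{T}_{\bullet}^{\bullet}(TM))$ (this is recalled in the Notation section, where $\mf{r}_{\R}$ is introduced as an $\mc{A}(U)$-isomorphism with inverse $\mf{t}_{\R}$), the functoriality of the tensor product $\mc{A}(U,G)\otimes_{\mc{A}(U)}(-)$ applied to these maps yields $\mc{A}(U)$-linear maps that are again mutually inverse.

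Concretely, first I would record that for any commutative ring $A$, any $A$-module $P$, and any $A$-module isomorphism $u\colon Q_{1}\to Q_{2}$ with inverse $v$, the induced maps $\mr{Id}_{P}\otimes u$ and $\mr{Id}_{P}\otimes v$ satisfy $(\mr{Id}_{P}\otimes v)\circ(\mr{Id}_{P}\otimes u)=\mr{Id}_{P}\otimes(v\circ u)=\mr{Id}_{P\otimes_{A}Q_{1}}$ and symmetrically on the other side; this is the standard functoriality of $\otimes_{A}$ in each variable (cf. the universal property of the tensor product of modules over a commutative ring, as used repeatedly above). Then I would apply this with $A=\mc{A}(U)$, $P=\mc{A}(U,G)$, $Q_{1}=\mf{T}_{\bullet}^{\bullet}(U,M)$, $Q_{2}=\Gamma(U,\mf{T}_{\bullet}^{\bullet}(TM))$, $u=\mf{t}_{\R}$ and $v=\mf{r}_{\R}$, so that $\mf{t}_{G}\circ\mf{r}_{G}=\mr{Id}_{\mc{A}(U,G)}\otimes(\mf{t}_{\R}\circ\mf{r}_{\R})$ is the identity on $\Gamma(U,\mf{T}_{\bullet}^{\bullet}(TM);G)$ and $\mf{r}_{G}\circ\mf{t}_{G}=\mr{Id}_{\mc{A}(U,G)}\otimes(\mf{r}_{\R}\circ\mf{t}_{\R})$ is the identity on $\mf{T}_{\bullet}^{\bullet}(U,M;G)$. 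Both composites being identities, $\mf{t}_{G}$ and $\mf{r}_{G}$ are isomorphisms in $\mc{A}(U)-\mr{mod}$, mutually inverse.

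There is essentially no obstacle here: the only point requiring a word of care is that the domains and codomains match after passing through the identifications of Prp. \ref{09101648} (which lets one regard $\mf{T}_{\bullet}^{\bullet}(U,M;G)$ as $\mc{A}(U,G)\otimes_{\mc{A}(U)}\mf{T}_{\bullet}^{\bullet}(U,M)$) and the analogous grading of $\Gamma(U,\mf{T}_{\bullet}^{\bullet}(TM);G)$, so that the tensored maps act on genuine tensor products over $\mc{A}(U)$; once this bookkeeping is in place the argument is immediate, exactly as in Prp. \ref{08281918int}. So the proof reduces to the one-line remark: \emph{since $\mf{t}_{\R}$ and $\mf{r}_{\R}$ are isomorphisms one the inverse of the other in the category $\mc{A}(U)-\mr{mod}$, so are $\mf{t}_{G}=\mr{Id}_{\mc{A}(U,G)}\otimes\mf{t}_{\R}$ and $\mf{r}_{G}=\mr{Id}_{\mc{A}(U,G)}\otimes\mf{r}_{\R}$ by functoriality of $\otimes_{\mc{A}(U)}$.}
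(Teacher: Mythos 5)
Your proposal is correct and is essentially the paper's own argument: the paper's proof is the one-line observation that $\mf{t}_{\R}$ and $\mf{r}_{\R}$ are mutually inverse $\mc{A}(U)$-isomorphisms, which you have merely spelled out via the functoriality of $\mc{A}(U,G)\otimes_{\mc{A}(U)}(-)$.
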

\begin{proof}
Since $\mf{t}_{\R}$ and $\mf{r}_{\R}$ are isomorphisms one the inverse of the other in the category $\mc{A}(U)-\mr{mod}$.
\end{proof}
\begin{remark}
\label{09261218diff}
The gluing lemma via a covering of charts extends to $\Gamma(U,\mf{T}_{\bullet}^{\bullet}(TM);G)$,
since it extends for maps in $\mc{A}(U,G)$.
\end{remark}
We shall use convention \ref{09180844} also for the above defined maps.  
\begin{definition}
Let $N$ be a differential manifold, $W$ be an open set of $N$ and $F\in\mc{C}^{\infty}(W,U)$.
Define
\begin{equation*}
\begin{aligned}
F^{\ast}:\mc{A}(U,G)&\to\mc{A}(W,G),
\\
f&\mapsto f\circ F.
\end{aligned}
\end{equation*}
\end{definition}
Since $F^{\ast}$ is $\R$-linear we can give the following 
\begin{definition}
[\textbf{Pullback of Smooth Tensor of type $(0,s)$}]
Let $N$ be a differential manifold, $W$ be an open set of $N$ and $F\in\mc{C}^{\infty}(W,U)$.
Define
\begin{equation}
\label{09180903}
\begin{aligned}
\overset{\times}{F}&\in\mr{Mor}_{\R-\mr{mod}}(\mf{T}_{\bullet}^{0}(U,M;G),\mf{T}_{\bullet}^{0}(W,N;G))
\\
\overset{\times}{F}&\coloneqq F^{\ast}\otimes F^{\ast};
\end{aligned}
\end{equation}
and
\begin{equation}
\label{09180904}
\begin{aligned}
\overset{\times}{F}&\in\mr{Mor}_{\R-\mr{mod}}(\Gamma(U,\mf{T}_{\bullet}^{0}(TM);G),\Gamma(W,\mf{T}_{\bullet}^{0}(TN);G);
\\
\overset{\times}{F}&\coloneqq F^{\ast}\otimes F^{\ast}.
\end{aligned}
\end{equation}
\end{definition}
\begin{definition}
Let $\uppsi\in\mc{L}(G,H)$, thus define by abuse of language
\begin{equation*}
\uppsi_{\ast}:\mc{A}(U,G)\ni f\mapsto\uppsi\circ f\in\mc{A}(U,H).
\end{equation*}
\end{definition}
Well-set definition since $\uppsi$ is linear and continuous.
Clearly we have 
\begin{lemma}
\label{08281522diff}
Let $\uppsi\in\mc{L}(G,H)$, thus 
$\uppsi_{\ast}\in\mr{Mor}_{\mc{A}(U)-\mr{mod}}(\mc{A}(U,G),\mc{A}(U,H))$.
\end{lemma}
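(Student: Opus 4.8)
The plan is to mirror the proof of Lemma \ref{08281522int}, the smooth analogue replacing $\mf{L}_{c}^{1}(U,G,\lambda)$ by $\mc{A}(U,G)$; the only point needing a word is the well-definedness of $\uppsi_{\ast}$, already asserted in the preceding definition.

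First I would check that $\uppsi\circ f\in\mc{A}(U,H)$ whenever $f\in\mc{A}(U,G)$. Fix a chart $(V,\phi)$ of the manifold $U$; then $(\uppsi\circ f)\circ\imath_{V}^{U}\circ\phi^{-1}=\uppsi\circ\bigl(f\circ\imath_{V}^{U}\circ\phi^{-1}\bigr)$ and $f\circ\imath_{V}^{U}\circ\phi^{-1}\in\mc{C}^{\infty}(\phi(V),G)$ by definition of $\mc{A}(U,G)$, so it suffices to know that postcomposition with a continuous linear map carries Bastiani $\mc{C}^{\infty}$-maps to Bastiani $\mc{C}^{\infty}$-maps. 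This in turn follows from the identity $D_{v}^{W,H}(\uppsi\circ g)=\uppsi\circ D_{v}^{W,G}g$, valid for open $W\subseteq\R^{n}$, $g\in\mc{C}^{\infty}(W,G)$ and $v\in\R^{n}$ by linearity and continuity of $\uppsi$, combined with an induction on the differentiation order applied to iterated directional derivatives. Since $(V,\phi)$ was an arbitrary chart, this gives $\uppsi\circ f\in\mc{A}(U,H)$.

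Then I would verify $\mc{A}(U)$-linearity. Additivity of $\uppsi_{\ast}$ is immediate from additivity of $\uppsi$. For $h\in\mc{A}(U)$ and $f\in\mc{A}(U,G)$ the $\mc{A}(U)$-module structure of $\mc{A}(U,G)$ is pointwise, so $(hf)(p)=h(p)f(p)$ with $h(p)\in\R\subseteq\K$, whence $\uppsi_{\ast}(hf)(p)=\uppsi\bigl(h(p)f(p)\bigr)=h(p)\,\uppsi\bigl(f(p)\bigr)=\bigl(h\,\uppsi_{\ast}(f)\bigr)(p)$ for every $p\in U$ by $\K$-linearity of $\uppsi$; hence $\uppsi_{\ast}(hf)=h\,\uppsi_{\ast}(f)$, and $\uppsi_{\ast}\in\mr{Mor}_{\mc{A}(U)-\mr{mod}}(\mc{A}(U,G),\mc{A}(U,H))$. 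No genuine obstacle arises; the only step requiring any care is the chain-rule computation for the Bastiani differential in the first paragraph, which is precisely why the definition of $\uppsi_{\ast}$ was flagged as well-set.
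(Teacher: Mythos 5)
Your proposal is correct and simply fills in the details that the paper treats as immediate: the paper offers no proof at all, prefacing the lemma with ``Clearly we have'' and having already asserted well-definedness of $\uppsi_{\ast}$ right after its definition (``Well-set definition since $\uppsi$ is linear and continuous''). Your two steps --- postcomposition with a continuous linear map preserves Bastiani smoothness chartwise, and pointwise $\mc{A}(U)$-linearity from $\K$-linearity of $\uppsi$ --- are exactly the argument the paper is implicitly invoking.
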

The above result permits to give the following
\begin{definition}
[\textbf{Pushforward of $G$-Valued Smooth Tensors}]
\label{08281845diff}
Let $\uppsi\in\mc{L}(G,H)$, define
\begin{equation*}
\begin{aligned}
\psi_{\times}&\in\mr{Mor}_{\mc{A}(U)-\mr{mod}}(\mf{T}_{\bullet}^{\bullet}(U,M;G),\mf{T}_{\bullet}^{\bullet}(U,M;H));
\\
\uppsi_{\times}&\coloneqq\uppsi_{\ast}\otimes\mr{Id}_{\mf{T}_{\bullet}^{\bullet}(U,M)}.
\end{aligned}
\end{equation*}
\end{definition}
Then easily we find that
\begin{proposition}
[\textbf{Pushforward Commutes with All the Above Operators}]  
\label{09170918diff}
Let $N$ be a differential manifold, $W$ be an open set of $N$, and $F\in\mc{C}^{\infty}(W,U)$.
If $\uppsi\in\mc{L}(G,H)$, then $\uppsi_{\times}\circ\mf{t}=\mf{t}\circ\uppsi_{\times}$,
$\uppsi_{\times}\circ\mf{r}=\mf{r}\circ\uppsi_{\times}$,
and $\uppsi_{\times}\circ\overset{\times}{F}=\overset{\times}{F}\circ\uppsi_{\times}$;
\end{proposition}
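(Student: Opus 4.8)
The plan is to observe that all four maps entering these identities are, by definition, tensor products of two linear maps (Def. \ref{09180944}, Def. \ref{08281845diff} and \eqref{09180903}--\eqref{09180904}), so that each identity reduces to a one-line check on elementary tensors together with the bifunctoriality of $\otimes_{\mc{A}(U)}$ (resp. its compatibility with the ring change $F^{\ast}\colon\mc{A}(U)\to\mc{A}(W)$). Explicitly, under Convention \ref{09180844} we have $\mf{t}=\mr{Id}\otimes\mf{t}_{\R}$, $\mf{r}=\mr{Id}\otimes\mf{r}_{\R}$, $\uppsi_{\times}=\uppsi_{\ast}\otimes\mr{Id}$ (the last one understood also on $\Gamma(U,\mf{T}_{\bullet}^{\bullet}(TM);G)$, where it is $\uppsi_{\ast}\otimes\mr{Id}_{\Gamma(U,\mf{T}_{\bullet}^{\bullet}(TM))}$), and $\overset{\times}{F}=F^{\ast}\otimes F^{\ast}$. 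Since the modules $\mf{T}_{\bullet}^{\bullet}(U,M;G)$, $\Gamma(U,\mf{T}_{\bullet}^{\bullet}(TM);G)$ and their type-$(0,s)$ analogues are spanned as $\Z$-modules by the elementary tensors $f\otimes T$, and since both sides of each asserted equality are additive, it is enough to test agreement on such $f\otimes T$.

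First I would treat $\uppsi_{\times}\circ\mf{t}=\mf{t}\circ\uppsi_{\times}$: on $f\otimes T$ the left side gives $f\otimes\mf{t}_{\R}(T)\mapsto(\uppsi\circ f)\otimes\mf{t}_{\R}(T)$ and the right side gives $(\uppsi\circ f)\otimes T\mapsto(\uppsi\circ f)\otimes\mf{t}_{\R}(T)$, so the two coincide; the identity for $\mf{r}$ is proved verbatim with $\mf{r}_{\R}$ in place of $\mf{t}_{\R}$ (or deduced from the first by pre- and post-composing with the mutually inverse isomorphisms of Prop. \ref{08281918}). For $\uppsi_{\times}\circ\overset{\times}{F}=\overset{\times}{F}\circ\uppsi_{\times}$, recall that the first tensor factor of $\overset{\times}{F}$ is the map $\mc{A}(U,G)\ni f\mapsto f\circ F\in\mc{A}(W,G)$; on an elementary tensor $f\otimes T$ of type $(0,s)$ one side yields $\bigl((\uppsi\circ f)\circ F\bigr)\otimes(F^{\ast}T)$ and the other $\bigl(\uppsi\circ(f\circ F)\bigr)\otimes(F^{\ast}T)$, and these agree by associativity of composition $(\uppsi\circ f)\circ F=\uppsi\circ(f\circ F)$; the same computation with the first factor read inside $\Gamma(U,\mf{T}_{\bullet}^{0}(TM);G)$ disposes of the bar-operator version \eqref{09180904}.

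I expect no genuine obstacle here: the proof is purely formal, mirroring that of Prop. \ref{09170918} in the integrable setting. The only point deserving care is the bookkeeping of which module each symbol lives on --- whether the coefficient ring is $\mc{A}(U)$ or $\mc{A}(W)$, and whether the values lie in $G$ or $H$ --- which is entirely regulated by Convention \ref{09180844}, by the pullback definitions, and by the already-granted well-definedness of $\overset{\times}{F}=F^{\ast}\otimes F^{\ast}$ over the ring morphism $F^{\ast}$.
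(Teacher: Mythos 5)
Your proof is correct and matches the paper's intent: the paper offers no written proof at all, introducing the proposition with ``Then easily we find that'', and the routine verification on elementary tensors $f\otimes T$ that you carry out is exactly the check being left to the reader. Nothing further is needed.
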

and that
\begin{proposition}
\label{09211102diff}
Let $N$ be a differential manifold, $W$ be an open set of $N$, and $F\in\mc{C}^{\infty}(W,U)$.
Thus for every $h\in\mc{A}(U)$ and every $\theta\in\mf{T}_{\bullet}^{0}(U,M;G)$ 
we have $\overset{\times}{F}(h\theta)=(F^{\ast}h)\overset{\times}{F}(\theta)$.
\end{proposition}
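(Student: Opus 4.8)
The plan is to reduce everything to the case of a pure tensor, using the fact that $\mf{T}_{\bullet}^{0}(U,M;G)$ is spanned over $\mc{A}(U)$ by elements of the form $f\otimes T$ with $f\in\mc{A}(U,G)$ and $T\in\mf{T}_{\bullet}^{0}(U,M)$, and that $\overset{\times}{F}=F^{\ast}\otimes F^{\ast}$ is $\R$-linear. So first I would fix $h\in\mc{A}(U)$ and it suffices, by additivity of both sides in $\theta$, to verify the identity $\overset{\times}{F}(h(f\otimes T))=(F^{\ast}h)\,\overset{\times}{F}(f\otimes T)$ for a single pure tensor $f\otimes T$.

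Next I would compute both sides explicitly. On the left, the $\mc{A}(U)$-module structure on the tensor product gives $h(f\otimes T)=(hf)\otimes T$, so $\overset{\times}{F}(h(f\otimes T))=\overset{\times}{F}((hf)\otimes T)=F^{\ast}(hf)\otimes F^{\ast}(T)$. Now $F^{\ast}(hf)=(hf)\circ F=(h\circ F)(f\circ F)=(F^{\ast}h)(F^{\ast}f)$, pointwise on $W$, hence $\overset{\times}{F}(h(f\otimes T))=\bigl((F^{\ast}h)(F^{\ast}f)\bigr)\otimes F^{\ast}(T)$. On the right, $\overset{\times}{F}(f\otimes T)=F^{\ast}(f)\otimes F^{\ast}(T)$, and then applying the $\mc{A}(W)$-module structure with the scalar $F^{\ast}h\in\mc{A}(W)$ gives $(F^{\ast}h)\bigl(F^{\ast}(f)\otimes F^{\ast}(T)\bigr)=\bigl((F^{\ast}h)(F^{\ast}f)\bigr)\otimes F^{\ast}(T)$. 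The two expressions coincide, which settles the pure-tensor case.

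Finally I would pass from pure tensors to general $\theta$. Write $\theta=\sum_{k} f_{k}\otimes T_{k}$ as a finite sum; then $h\theta=\sum_{k}(hf_{k})\otimes T_{k}$ and by $\R$-linearity of $\overset{\times}{F}$ together with the pure-tensor identity just established, $\overset{\times}{F}(h\theta)=\sum_{k}\overset{\times}{F}((hf_{k})\otimes T_{k})=\sum_{k}(F^{\ast}h)\,\overset{\times}{F}(f_{k}\otimes T_{k})=(F^{\ast}h)\sum_{k}\overset{\times}{F}(f_{k}\otimes T_{k})=(F^{\ast}h)\,\overset{\times}{F}(\theta)$, where the penultimate step uses $\R$-linearity of multiplication by the fixed element $F^{\ast}h$ on the target module. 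This completes the argument.

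The only genuine point of care — hardly an obstacle — is bookkeeping about which module structure is in play on which side: the left-hand multiplication by $h$ happens in the $\mc{A}(U)$-module $\mf{T}_{\bullet}^{0}(U,M;G)$ (absorbed into the first tensor factor living over $U$), while the right-hand multiplication by $F^{\ast}h$ happens in the $\mc{A}(W)$-module $\mf{T}_{\bullet}^{0}(W,N;G)$. The identity $F^{\ast}(hf)=(F^{\ast}h)(F^{\ast}f)$ is exactly the bridge between the two, and it is immediate from the definition $F^{\ast}(\,\cdot\,)=(\,\cdot\,)\circ F$ evaluated pointwise. I would also remark that the same computation verbatim handles the twin statement for $\Gamma(W,\mf{T}_{\bullet}^{0}(TN);G)$ in \eqref{09180904}, and of course the analogous Prp. \ref{09211102} in the integrable setting with $\mf{L}_{c}^{1}$ in place of $\mc{A}(U,G)$.
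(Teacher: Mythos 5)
Your argument is correct and is precisely the routine verification the paper has in mind: the statement is introduced by ``Then easily we find that'' and no proof is given, the point being exactly the pure-tensor computation $F^{\ast}(hf)=(F^{\ast}h)(F^{\ast}f)$ combined with the balanced/linear structure of $\overset{\times}{F}=F^{\ast}\otimes F^{\ast}$. Your bookkeeping of the two module structures (over $\mc{A}(U)$ on the source, over $\mc{A}(W)$ on the target) is the one genuine point, and you handle it correctly.
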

\begin{corollary}
\label{09170919diff}
Assume $\K=\C$.
Let $N$ be a differential manifold, $W$ be an open set of $N$ and $F\in\mc{C}^{\infty}(W,U)$.
If $\{G_{j}\}_{j\in J}$ is a family of \textbf{real} locally convex spaces and $G$ is such that
$G_{\R}=\prod_{j\in J}G_{j}$ provided with the product topology.
Thus for every $j\in J$ we have that
$\Pr^{j}_{\times}\circ\mf{t}=\mf{t}\circ\Pr^{j}_{\times}$,
$\Pr^{j}_{\times}\circ\mf{r}=\mf{r}\circ\Pr^{j}_{\times}$,
and $\Pr^{j}_{\times}\circ\overset{\times}{F}=\overset{\times}{F}\circ\Pr^{j}_{\times}$.
\end{corollary}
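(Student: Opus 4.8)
The statement is the exact smooth analogue of Corollary \ref{09170919}, the only change being that the module $\mf{L}_{c}^{1}(U,G,\lambda)$ is replaced throughout by $\mc{A}(U,G)$ and the integrable bar operators of Definition \ref{09101743} are replaced by the smooth bar operators of Definition \ref{09180944}, while $\overset{\times}{F}$ now refers to the pullback of smooth tensors in \eqref{09180903}. So the plan is to mimic the proof of Corollary \ref{09170919} verbatim. First I would observe that the hypothesis makes the statement well-posed: since $G_{\R}=\prod_{j\in J}G_{j}$ is given the product topology, which is locally convex by the particular case of \cite[II.5]{EVT}, each projection $\Pr^{j}$ is a continuous $\R$-linear map $G_{\R}\to G_{j}$, i.e. $\Pr^{j}\in\mc{L}(G_{\R},G_{j})$, and hence the pushforward $\Pr^{j}_{\times}$ of Definition \ref{08281845diff} is defined.

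Second, I would invoke Proposition \ref{09170918diff} — the smooth version of the ``pushforward commutes with all the above operators'' statement — applied with the base field $\K$ taken to be $\R$, the space $G$ replaced by $G_{\R}$, the space $H$ replaced by $G_{j}$, and the continuous linear map $\uppsi$ replaced by $\Pr^{j}$. This immediately yields $\Pr^{j}_{\times}\circ\mf{t}=\mf{t}\circ\Pr^{j}_{\times}$, $\Pr^{j}_{\times}\circ\mf{r}=\mf{r}\circ\Pr^{j}_{\times}$, and $\Pr^{j}_{\times}\circ\overset{\times}{F}=\overset{\times}{F}\circ\Pr^{j}_{\times}$, which is precisely the conclusion. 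One should note that Remark \ref{10041224} guarantees $\mf{T}_{s}^{r}(U,M;G)=\mf{T}_{s}^{r}(U,M;G_{\R})$ (since $\mc{A}(U,G)=\mc{A}(U,G_{\R})$), so the passage from $G$ to $G_{\R}$ in applying Proposition \ref{09170918diff} does not change the ambient module on which these operators act.

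There is essentially no obstacle here: the content of the corollary is entirely carried by Proposition \ref{09170918diff}, and the only thing one verifies independently is the local convexity of the product topology, which is the cited point of \cite[II.5]{EVT}. In short, the proof reads: for each $j\in J$ one has $\Pr^{j}\in\mc{L}(G_{\R},G_{j})$ because the product topology on $G_{\R}=\prod_{j\in J}G_{j}$ is locally convex by \cite[II.5]{EVT}; the statement is therefore well-set, and it follows at once from Proposition \ref{09170918diff} applied to $\K=\R$, to $G$ replaced by $G_{\R}$, and to $\uppsi$ replaced by $\Pr^{j}$.
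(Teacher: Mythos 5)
Your proposal is correct and follows the paper's own argument essentially verbatim: observe that $\Pr^{j}\in\mc{L}(G_{\R},G_{j})$ because the product topology is locally convex by \cite[II.5]{EVT}, then apply Prp. \ref{09170918diff} with $\K=\R$, $G$ replaced by $G_{\R}$ and $\uppsi$ replaced by $\Pr^{j}$. Your additional appeal to Rmk. \ref{10041224} to justify that passing from $G$ to $G_{\R}$ does not change the ambient module is a sensible clarification that the paper leaves implicit.
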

\begin{proof}
$\Pr^{j}\in\mc{L}(G_{\R},G_{j})$ and the product topology is locally convex as a particular case of what stated in
\cite[II.5]{EVT}. Thus the statement is well-set and it follows since Prp. \ref{09170918diff} applied
to $\K=\R$, to $G$ replaced by $G_{\R}$ and to $\uppsi$ replaced by $\Pr^{j}$.
\end{proof}
\section{$G$-Valued Integrable and Smooth Forms}
\label{09172013}
\begin{definition}
[\textbf{$G$-valued Scalarly Essentially Integrable Forms at $M$ defined on $U$}]
\label{10021357}
For every $k\in\Z_{+}$
define the $\mc{A}(U)$-module of $G$-valued scalarly essentially $\lambda$-integrable $k$-forms at $M$ defined on $U$
as follows 
\begin{equation*}
\mr{Alt}^{k}(U,M;G,\lambda)\coloneqq\mf{L}_{c}^{1}(U,G,\lambda)\otimes_{\mc{A}(U)}\mr{Alt}^{k}(U,M).
\end{equation*}
Next define the $\mc{A}(U)$-module 
\begin{equation*}
\mr{Alt}^{\bullet}(U,M;G,\lambda)\coloneqq\bigoplus_{k\in\Z_{+}}\mr{Alt}^{k}(U,M;G,\lambda).
\end{equation*}
Set $\mr{Alt}^{k}(M;G,\lambda)\coloneqq\mr{Alt}^{k}(M,M;G,\lambda)$ and
$\mr{Alt}^{\bullet}(M;G,\lambda)\coloneqq\mr{Alt}^{\bullet}(M,M;G,\lambda)$.
Finally define the $\mc{A}(U)$-modules
\begin{equation*}
\mr{Alt}_{0}^{\bullet}(U,M;G,\lambda)\coloneqq\mc{H}(U,G_{\R})\otimes_{\mc{A}(U)}\mr{Alt}^{\bullet}(U,M);
\end{equation*}
and
\begin{equation*}
\Omega^{\bullet}(U,M;G,\lambda)\coloneqq\mf{L}_{c}^{1}(U,G,\lambda)\otimes_{\mc{A}(U)}\Omega^{\bullet}(U,M).
\end{equation*}
\end{definition}
Clearly $\mr{Alt}_{0}^{\bullet}(U,M;G,\lambda)$ is isomorphic to a $\mc{A}(U)$-submodule of $\mr{Alt}^{\bullet}(U,M;G,\lambda)$
and this is isomorphic to a $\mc{A}(U)$-submodule of $\mf{I}_{\bullet}^{0}(U,M;G,\lambda)$. In what follows we shall identify
these isomorphic modules.
\begin{proposition}
\label{09111218}
$\mr{Alt}^{\bullet}(U,M;G,\lambda)$ is isomorphic to $\mf{L}_{c}^{1}(U,G,\lambda)\otimes_{\mc{A}(U)}\mr{Alt}^{\bullet}(U,M)$
in the category $\mc{A}(U)-\mr{mod}$.
\end{proposition}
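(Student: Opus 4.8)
The plan is to observe that the asserted isomorphism is nothing more than the distributivity of the tensor product over direct sums, exactly in the spirit of Prp.~\ref{09101648} and Prp.~\ref{09101648int}. Unwinding the definitions, the left-hand side $\mr{Alt}^{\bullet}(U,M;G,\lambda)$ is by construction $\bigoplus_{k\in\Z_{+}}\bigl(\mf{L}_{c}^{1}(U,G,\lambda)\otimes_{\mc{A}(U)}\mr{Alt}^{k}(U,M)\bigr)$, while the right-hand side is $\mf{L}_{c}^{1}(U,G,\lambda)\otimes_{\mc{A}(U)}\bigl(\bigoplus_{k\in\Z_{+}}\mr{Alt}^{k}(U,M)\bigr)$, since $\mr{Alt}^{\bullet}(U,M)=\bigoplus_{k\in\Z_{+}}\mr{Alt}^{k}(U,M)$.

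First I would invoke \cite[II.61 Prp.~7]{BourA1} to produce the canonical $\Z$-linear isomorphism between $\bigoplus_{k}\bigl(B\otimes_{\mc{A}(U)}C_{k}\bigr)$ and $B\otimes_{\mc{A}(U)}\bigl(\bigoplus_{k}C_{k}\bigr)$, taken with $B=\mf{L}_{c}^{1}(U,G,\lambda)$ and $C_{k}=\mr{Alt}^{k}(U,M)$; concretely this map sends a family of elementary tensors $(f_{k}\otimes\omega_{k})_{k}$ to $\sum_{k}f_{k}\otimes\omega_{k}$, where each $\omega_{k}$ is viewed inside the direct sum. Then, exactly as in the proofs of Prp.~\ref{09101648} and Prp.~\ref{09101648int}, I would note that this $\Z$-linear bijection is automatically $\mc{A}(U)$-linear, because the $\mc{A}(U)$-module structure on a tensor product of $\mc{A}(U)$-modules is defined through the action on one of the two factors, and the canonical map is built from the identity on that factor together with the componentwise module structure on a direct sum.

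I do not expect a genuine obstacle here: the only point that requires a word is the compatibility of the canonical isomorphism with the $\mc{A}(U)$-module structures, and this is a formal consequence of the definition of the module structure on tensor products over the commutative ring $\mc{A}(U)$. Hence the statement follows.
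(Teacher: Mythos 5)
Your proposal is correct and follows essentially the same route as the paper: the paper's proof also invokes \cite[II.61 Prp.~7]{BourA1} to obtain the canonical $\Z$-linear isomorphism distributing the tensor product over the direct sum, and then observes that it is an $\mc{A}(U)$-module isomorphism by the definition of the module structure on tensor products over a commutative ring. Your version merely spells out the identifications of both sides and the explicit form of the canonical map, which the paper leaves implicit.
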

\begin{proof}
Since \cite[II.61 Prp. 7]{BourA1} there exists a canonical $\Z$-linear isomorphism that is clearly
a $\mc{A}(U)-\mr{mod}$ isomorphism by the definition of the module structure of the tensor product of modules over a
commutative ring.
\end{proof}
\begin{remark}
$\mr{Alt}^{\bullet}(U,M;G,\lambda)=\mf{L}_{c}^{1}(U,G,\lambda)\otimes_{\mc{A}(U)}\mr{Alt}_{c}^{\bullet}(U,M)$
since Prp. \ref{09161155} where we employ the convention described in Rmk. \ref{09161024int}.
\end{remark}
\begin{corollary}
[\textbf{Unique Decomposition of $G$-Valued Scalarly Essentially Integrable Forms}]  
\label{08262041int}
Let $(\phi,U)$ be a chart of $M$ and $\theta\in\mr{Alt}^{k}(U,M;G,\lambda)$, thus
\begin{equation*}
\bigl(\exists\,!f:M(k,N,<)\to\mf{L}_{c}^{1}(U,G,\lambda)\bigr)
\left(\theta=\sum_{I\in M(k,N,<)}f_{I}\otimes\mc{E}_{dx^{\phi}}(I)\right).
\end{equation*}
\end{corollary}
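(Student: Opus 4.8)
The plan is to reduce this to the already established unique decomposition machinery and the fact that $\mr{Alt}^k(U,M)$ is a free $\mc{A}(U)$-module with the displayed basis. First I would note that, exactly as in the notation section, given a chart $(U,\phi)$ of $M$ the family $\{\mc{E}_{dx^{\phi}}(I)\mid I\in M(k,N,<)\}$ is by construction the image under the $\mc{A}(U)$-isomorphism $\mf{r}$ of the standard basis of $\Gamma(U,\mr{Alt}^k(TM))$ associated with the chart, hence is itself a basis of the $\mc{A}(U)$-module $\mr{Alt}^k(U,M)$. So $\mr{Alt}^k(U,M)$ is a free $\mc{A}(U)$-module.

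Next I would invoke the definition $\mr{Alt}^k(U,M;G,\lambda)\coloneqq\mf{L}_{c}^{1}(U,G,\lambda)\otimes_{\mc{A}(U)}\mr{Alt}^k(U,M)$ together with \cite[II.62 Cor.\ 1]{BourA1}: for any module $B$ over a commutative ring $A$ and any free $A$-module $C$ with basis $(c_i)_{i\in I}$, every element of $B\otimes_A C$ is uniquely of the form $\sum_i b_i\otimes c_i$ with $(b_i)$ a family in $B$ of finite support. Applying this with $A=\mc{A}(U)$, $B=\mf{L}_{c}^{1}(U,G,\lambda)$, $C=\mr{Alt}^k(U,M)$ and the basis indexed by $M(k,N,<)$ yields exactly the asserted existence and uniqueness of $f\colon M(k,N,<)\to\mf{L}_{c}^{1}(U,G,\lambda)$ with $\theta=\sum_{I\in M(k,N,<)}f_I\otimes\mc{E}_{dx^{\phi}}(I)$. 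This is entirely parallel to the proofs of Cor.\ \ref{10011800} and Cor.\ \ref{10011804}, with $\mf{T}_{s}^{r}(U,M)$ replaced by $\mr{Alt}^k(U,M)$; indeed the statement is really just the $k$-form specialization of Cor.\ \ref{10011800} via the identification of $\mr{Alt}^{\bullet}(U,M;G,\lambda)$ with an $\mc{A}(U)$-submodule of $\mf{I}_{\bullet}^{0}(U,M;G,\lambda)$ recorded just after Def.\ \ref{10021357}.

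There is essentially no obstacle here: the only points requiring a word are that $M(k,N,<)$ is finite (so finiteness of support is automatic and the sum is literally finite) and that one is genuinely using the \emph{freeness} of $\mr{Alt}^k(U,M)$ over $\mc{A}(U)$, which holds precisely because $U$ is a chart domain — on a general open set $\mr{Alt}^k(U,M)$ need not be free and the decomposition would fail. I would therefore phrase the proof in one or two sentences: observe that $\{\mc{E}_{dx^{\phi}}(I)\mid I\in M(k,N,<)\}$ is a basis of the $\mc{A}(U)$-module $\mr{Alt}^k(U,M)$, and conclude by \cite[II.62 Cor.\ 1]{BourA1} applied to the tensor product defining $\mr{Alt}^k(U,M;G,\lambda)$.

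\begin{proof}
Since $(U,\phi)$ is a chart of $M$, the family $\{\mc{E}_{dx^{\phi}}(I)\mid I\in M(k,N,<)\}$ is, by construction, the image under the $\mc{A}(U)$-isomorphism $\mf{r}$ of the basis of $\Gamma(U,\mr{Alt}^{k}(TM))$ associated with the chart $(U,\phi)$, hence it is a basis of the $\mc{A}(U)$-module $\mr{Alt}^{k}(U,M)$. Therefore $\mr{Alt}^{k}(U,M)$ is a free $\mc{A}(U)$-module, and applying \cite[II.62 Cor.\ 1]{BourA1} to $\mr{Alt}^{k}(U,M;G,\lambda)=\mf{L}_{c}^{1}(U,G,\lambda)\otimes_{\mc{A}(U)}\mr{Alt}^{k}(U,M)$ yields the asserted existence and uniqueness.
\end{proof}
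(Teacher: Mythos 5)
Your proof is correct and coincides with the paper's own argument: both observe that $\{\mc{E}_{dx^{\phi}}(I)\,\vert\,I\in M(k,N,<)\}$ is a basis of the free $\mc{A}(U)$-module $\mr{Alt}^{k}(U,M)$ and then apply \cite[II.62 Cor.1]{BourA1} to the tensor product defining $\mr{Alt}^{k}(U,M;G,\lambda)$. The extra remarks on finiteness of $M(k,N,<)$ and on freeness holding because $U$ is a chart domain are accurate but not needed beyond what the paper states.
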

\begin{proof}
$\{\mc{E}_{dx^{\phi}}(I)\,\vert\,I\in M(k,N,<)\}$ is a basis of $\mr{Alt}^{k}(U,M)$, 
thus the statement follows since \cite[II.62 Cor.1]{BourA1}.
\end{proof}  
\begin{definition}
\label{09111121int}
Define by abuse of language
\begin{equation*}
\mf{t}_{G}\in\mr{Mor}_{\mc{A}(U)-\mr{mod}}\bigl(\mr{Alt}^{\bullet}(U,M;G,\lambda),\Omega^{\bullet}(U,M;G,\lambda)\bigr),
\end{equation*}
be the restriction of $\mf{t}_{G}$ defined in Def. \ref{09101743},
and let 
\begin{equation*}
\mf{r}_{G}\in\mr{Mor}_{\mc{A}(U)-\mr{mod}}\bigl(\Omega^{\bullet}(U,M;G,\lambda),\mr{Alt}^{\bullet}(U,M;G,\lambda)\bigr);
\end{equation*}
be the restriction of $\mf{r}_{G}$ defined in Def. \ref{09101743}.
\end{definition}
\begin{proposition}
$\mf{t}_{G}$ and $\mf{r}_{G}$ defined in Def. \ref{09111121int} are isomorphisms one the inverse of the other in the
category $\mc{A}(U)-\mr{mod}$.
\end{proposition}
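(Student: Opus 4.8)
The plan is to deduce this from Proposition \ref{08281918int} together with the fact, recorded in the notation section, that $\mf{t}_{\R}$ and $\mf{r}_{\R}$ restrict to mutually inverse $\mc{A}(U)$-isomorphisms between $\mr{Alt}^{\bullet}(U,M)$ and its range $\Gamma(U,\mr{Alt}^{\bullet}(TM))=\Omega^{\bullet}(U,M)$. Since the maps $\mf{t}_{G}$ and $\mf{r}_{G}$ of Definition \ref{09111121int} are by construction the restrictions of the maps $\mr{Id}_{\mf{L}_{c}^{1}(U,G,\lambda)}\otimes\mf{t}_{\R}$ and $\mr{Id}_{\mf{L}_{c}^{1}(U,G,\lambda)}\otimes\mf{r}_{\R}$ of Definition \ref{09101743}, and the latter are already known by Proposition \ref{08281918int} to be mutually inverse $\mc{A}(U)$-isomorphisms of $\mf{I}_{\bullet}^{\bullet}(U,M;G,\lambda)$ with $\Gamma(U,\mf{T}_{\bullet}^{\bullet}(TM);G,\lambda)$, the whole statement will follow once I check that $\mf{t}_{G}$ carries the submodule $\mr{Alt}^{\bullet}(U,M;G,\lambda)$ into $\Omega^{\bullet}(U,M;G,\lambda)$ and that $\mf{r}_{G}$ carries $\Omega^{\bullet}(U,M;G,\lambda)$ into $\mr{Alt}^{\bullet}(U,M;G,\lambda)$: the restriction of an isomorphism to a submodule on which the inverse isomorphism also restricts is automatically an isomorphism onto, with inverse the restricted inverse.

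To verify these two inclusions I would argue on generators. By Proposition \ref{09111218} the elements $f\otimes\omega$ with $f\in\mf{L}_{c}^{1}(U,G,\lambda)$ and $\omega\in\mr{Alt}^{\bullet}(U,M)$ generate $\mr{Alt}^{\bullet}(U,M;G,\lambda)$ as a $\Z$-module, and on such a generator $\mf{t}_{G}(f\otimes\omega)=f\otimes\mf{t}_{\R}(\omega)$ with $\mf{t}_{\R}(\omega)\in\Gamma(U,\mr{Alt}^{\bullet}(TM))=\Omega^{\bullet}(U,M)$; since $\mf{t}_{G}$ is additive this yields $\mf{t}_{G}\bigl(\mr{Alt}^{\bullet}(U,M;G,\lambda)\bigr)\subseteq\mf{L}_{c}^{1}(U,G,\lambda)\otimes_{\mc{A}(U)}\Omega^{\bullet}(U,M)=\Omega^{\bullet}(U,M;G,\lambda)$. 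The inclusion for $\mf{r}_{G}$ is entirely symmetric, using that $\mf{r}_{\R}$ maps $\Omega^{\bullet}(U,M)=\Gamma(U,\mr{Alt}^{\bullet}(TM))$ into $\mr{Alt}^{\bullet}(U,M)$. Combining this with the previous paragraph gives that the restricted $\mf{t}_{G}$ and $\mf{r}_{G}$ are $\mc{A}(U)$-linear, mutually inverse (hence bijective) maps between $\mr{Alt}^{\bullet}(U,M;G,\lambda)$ and $\Omega^{\bullet}(U,M;G,\lambda)$, which is the claim.

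I do not expect a serious obstacle here; the only point needing care is keeping track of the identifications used in the excerpt — namely that $\mr{Alt}^{\bullet}(U,M;G,\lambda)$ is being regarded as an honest $\mc{A}(U)$-submodule of $\mf{I}_{\bullet}^{0}(U,M;G,\lambda)$ (and correspondingly $\Omega^{\bullet}(U,M;G,\lambda)$ inside $\Gamma(U,\mf{T}_{\bullet}^{0}(TM);G,\lambda)$), and that under these identifications $\mf{t}_{\R}$ and $\mf{r}_{\R}$ genuinely interchange $\mr{Alt}^{\bullet}(U,M)$ with $\Omega^{\bullet}(U,M)$. Both facts are supplied by the notation section and by the remark preceding Proposition \ref{09111218}, so the argument is essentially bookkeeping layered on top of Proposition \ref{08281918int}, and no new estimate or construction is needed.
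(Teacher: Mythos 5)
Your argument is correct and follows the same route as the paper, whose entire proof is the citation of Proposition \ref{08281918int}; you have merely made explicit the bookkeeping (that $\mf{t}_{\R}$ and $\mf{r}_{\R}$ interchange $\mr{Alt}^{\bullet}(U,M)$ with $\Omega^{\bullet}(U,M)$, hence the tensored maps restrict correctly on generators) that the paper leaves implicit. No discrepancy to report.
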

\begin{proof}
Since Prp. \ref{08281918int}.
\end{proof}
We shall use convention \ref{09180844} also for the above defined maps.  
\begin{definition}
Let $N$ be a differential manifold, $W$ be an open set of $N$ and $F\in\mc{C}^{\infty}(W,U)$ be a diffeomorphism.
Define by abuse of language the $\R$-linear map 
\begin{equation*}
\overset{\times}{F}:\mr{Alt}^{\bullet}(U,M;G,\lambda)\to\mr{Alt}^{\bullet}(W,N;G,\lambda);
\end{equation*}
as the restriction of the map defined in \eqref{09180901}.
Similarly define by abuse of language the $\R$-linear map 
\begin{equation*}
\overset{\times}{F}:\Omega^{\bullet}(U,M;G,\lambda)\to\Omega^{\bullet}(W,N;G,\lambda);
\end{equation*}
as the restriction of the map defined in \eqref{09180902}.
\end{definition}
Easily we see that
\begin{theorem}
[\textbf{Pushforward Commutes with All the Above Operators}]  
\label{08281542int}
Let $N$ be a differential manifold, $W$ be an open set of $N$, $F\in\mc{C}^{\infty}(W,U)$
be a diffeomorphism.
If $\uppsi\in\mc{L}(G,H)$, then $\uppsi_{\times}\circ\mf{t}=\mf{t}\circ\uppsi_{\times}$,
$\uppsi_{\times}\circ\mf{r}=\mf{r}\circ\uppsi_{\times}$ and 
$\uppsi_{\times}\circ\overset{\times}{F}=\overset{\times}{F}\circ\uppsi_{\times}$.
\end{theorem}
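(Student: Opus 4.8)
The plan is to observe that this theorem is simply the restriction of Proposition \ref{09170918int} to the submodules of forms, so the proof consists of checking that each of the three relevant operators restricts appropriately and that the commutation identities, already established at the level of full tensor fields, survive restriction. Concretely, $\mr{Alt}^{\bullet}(U,M;G,\lambda)$ is identified with a $\mc{A}(U)$-submodule of $\mf{I}_{\bullet}^{0}(U,M;G,\lambda)$ (via the identification noted after Def. \ref{10021357}), and likewise $\Omega^{\bullet}(U,M;G,\lambda)$ sits inside $\Gamma(U,\mf{T}_{\bullet}^{0}(TM);G,\lambda)$; the maps $\mf{t}$, $\mf{r}$, $\overset{\times}{F}$ appearing in the statement are by Def. \ref{09111121int} and the subsequent definition precisely the restrictions of the maps of the same name from Section \ref{09172012}, and $\uppsi_{\times}$ is the restriction of the pushforward from Def. \ref{08281845int}.

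First I would verify that $\uppsi_{\times}$ maps $\mr{Alt}^{k}(U,M;G,\lambda)$ into $\mr{Alt}^{k}(U,M;H,\lambda)$: since $\uppsi_{\times}=\uppsi_{\ast}\otimes\mr{Id}_{\mf{T}_{\bullet}^{\bullet}(U,M)}$ sends $f\otimes\omega$ to $(\uppsi\circ f)\otimes\omega$, and the generators of the form-submodule have $\omega\in\mr{Alt}^{k}(U,M)$, the image lies in $\mf{L}_{c}^{1}(U,H,\lambda)\otimes_{\mc{A}(U)}\mr{Alt}^{k}(U,M)$, as required. The analogous stability statements for $\mf{t}$, $\mf{r}$ (which act as $\mr{Id}\otimes\mf{t}_{\R}$, $\mr{Id}\otimes\mf{r}_{\R}$ and respect the $\mr{Alt}$-graded pieces since $\mf{t}_{\R}$, $\mf{r}_{\R}$ restrict to isomorphisms between $\mr{Alt}^{\bullet}(U,M)$ and its $\mf{r}_{\R}$-image) and for $\overset{\times}{F}=F^{\ast}\otimes F^{\ast}$ (which pulls back alternating tensors to alternating tensors) are recorded in the excerpt already.

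Granting these stability facts, the three identities $\uppsi_{\times}\circ\mf{t}=\mf{t}\circ\uppsi_{\times}$, $\uppsi_{\times}\circ\mf{r}=\mf{r}\circ\uppsi_{\times}$ and $\uppsi_{\times}\circ\overset{\times}{F}=\overset{\times}{F}\circ\uppsi_{\times}$ follow immediately: both sides of each identity are $\mc{A}(U)$-linear (respectively $\R$-linear, for the $\overset{\times}{F}$ one) maps on $\mr{Alt}^{\bullet}(U,M;G,\lambda)$, and they agree on the generators $f\otimes\omega$ because they are restrictions of the maps on $\mf{I}_{\bullet}^{0}(U,M;G,\lambda)$ that already satisfy the identity by Prp. \ref{09170918int}. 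So one simply invokes Prp. \ref{09170918int} and the commutativity of restriction with composition.

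I do not expect any genuine obstacle here; the only point requiring a sentence of care is the bookkeeping that the restricted maps are well-defined between the form submodules — i.e. that applying each operator to an element of $\mr{Alt}^{\bullet}$ lands back in $\mr{Alt}^{\bullet}$ rather than merely in $\mf{I}_{\bullet}^{0}$ — and this is exactly what the definitions in Def. \ref{09111121int} and the preceding identifications were set up to guarantee. Thus the proof is a one-line appeal to Prp. \ref{09170918int}, much as Prp. \ref{09170918diff} was for the smooth case.
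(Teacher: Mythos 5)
Your proposal is correct and coincides with the paper's (essentially unstated) reasoning: the paper gives no proof beyond the preamble ``Easily we see that,'' and the intended argument is exactly the one you spell out, namely that $\mf{t}$, $\mf{r}$, $\overset{\times}{F}$ and $\uppsi_{\times}$ on $\mr{Alt}^{\bullet}(U,M;G,\lambda)$ and $\Omega^{\bullet}(U,M;G,\lambda)$ are by definition restrictions of the corresponding operators on $\mf{I}_{\bullet}^{0}(U,M;G,\lambda)$ and $\Gamma(U,\mf{T}_{\bullet}^{0}(TM);G,\lambda)$, these restrictions preserve the form submodules, and the commutation identities already hold at the tensor-field level by Prp.~\ref{09170918}. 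The only slip is a citation typo: the proposition you invoke is the one labelled \ref{09170918} in the paper (there is no label ``09170918int'').
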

\begin{definition}
[\textbf{$G$-valued Smooth Forms at $M$ defined on $U$}]
For every $k\in\Z_{+}$ define the $\mc{A}(U)$-module of $G$-valued differential
$k$-forms at $M$ defined on $U$ as follows 
\begin{equation*}
\mr{Alt}^{k}(U,M;G)\coloneqq\mc{A}(U,G)\otimes_{\mc{A}(U)}\mr{Alt}^{k}(U,M);
\end{equation*}
and define the $\mc{A}(U)$-module of $G$-valued differential forms at $M$ defined on $U$ as follows
\begin{equation*}
\mr{Alt}^{\bullet}(U,M;G)\coloneqq\bigoplus_{k\in\Z_{+}}\mr{Alt}^{k}(U,M;G),
\end{equation*}
set $\mr{Alt}^{k}(M;G)\coloneqq\mr{Alt}^{k}(M,M;G)$ and $\mr{Alt}^{\bullet}(M;G)\coloneqq\mr{Alt}^{\bullet}(M,M;G)$.
Similarly
\begin{equation*}
\mr{Alt}_{c}^{k}(U,M;G)\coloneqq\mc{A}_{c}(U,G)\otimes_{\mc{A}(U)}\mr{Alt}^{k}(U,M).
\end{equation*}
and define the $\mc{A}(U)$-module of $G$-valued differential forms at $M$ defined on $U$ and with compact support
as follows
\begin{equation*}
\mr{Alt}_{c}^{\bullet}(U,M;G)\coloneqq\bigoplus_{k\in\Z_{+}}\mr{Alt}_{c}^{k}(U,M;G),
\end{equation*}
set $\mr{Alt}_{c}^{k}(M;G)\coloneqq\mr{Alt}_{c}^{k}(M,M;G)$ and
$\mr{Alt}_{c}^{\bullet}(M;G)\coloneqq\mr{Alt}_{c}^{\bullet}(M,M;G)$.
\end{definition}
\begin{proposition}
\label{09101648diff}
$\mr{Alt}^{\bullet}(U,M;G)$ is isomorphic to $\mc{A}(U,G)\otimes_{\mc{A}(U)}\mr{Alt}^{\bullet}(U,M)$
and $\mr{Alt}_{c}^{\bullet}(U,M;G)$ is isomorphic to $\mc{A}_{c}(U,G)\otimes_{\mc{A}(U)}\mr{Alt}^{\bullet}(U,M)$
in the category $\mc{A}(U)-\mr{mod}$.
\end{proposition}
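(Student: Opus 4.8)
The statement asserts two canonical isomorphisms in $\mc{A}(U)-\mr{mod}$:
\[
\mr{Alt}^{\bullet}(U,M;G)\;\cong\;\mc{A}(U,G)\otimes_{\mc{A}(U)}\mr{Alt}^{\bullet}(U,M),
\qquad
\mr{Alt}_{c}^{\bullet}(U,M;G)\;\cong\;\mc{A}_{c}(U,G)\otimes_{\mc{A}(U)}\mr{Alt}^{\bullet}(U,M).
\]
Both are instances of the same phenomenon already exploited in Prp. \ref{09101648}, Prp. \ref{09101648int} and Prp. \ref{09111218}: a direct sum of tensor products over a fixed left factor is canonically the tensor product of that factor with the direct sum. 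The plan is simply to invoke \cite[II.61 Prp. 7]{BourA1} twice, once for the pair $(\mc{A}(U,G),\{\mr{Alt}^{k}(U,M)\}_{k\in\Z_{+}})$ and once for $(\mc{A}_{c}(U,G),\{\mr{Alt}^{k}(U,M)\}_{k\in\Z_{+}})$, and then observe that the canonical $\Z$-linear isomorphism produced there respects the $\mc{A}(U)$-module structures.

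Concretely, I would write: by definition $\mr{Alt}^{\bullet}(U,M;G)=\bigoplus_{k\in\Z_{+}}\bigl(\mc{A}(U,G)\otimes_{\mc{A}(U)}\mr{Alt}^{k}(U,M)\bigr)$, and since $\mr{Alt}^{\bullet}(U,M)=\bigoplus_{k\in\Z_{+}}\mr{Alt}^{k}(U,M)$, the cited result of Bourbaki gives a canonical $\Z$-linear isomorphism
\[
\bigoplus_{k\in\Z_{+}}\bigl(\mc{A}(U,G)\otimes_{\mc{A}(U)}\mr{Alt}^{k}(U,M)\bigr)\;\cong\;\mc{A}(U,G)\otimes_{\mc{A}(U)}\Bigl(\bigoplus_{k\in\Z_{+}}\mr{Alt}^{k}(U,M)\Bigr).
\]
As in the proof of Prp. \ref{09101648}, this isomorphism is $\mc{A}(U)$-linear because the $\mc{A}(U)$-module structure on a tensor product of $\mc{A}(U)$-modules over the commutative ring $\mc{A}(U)$ is defined through either factor, and the canonical map is built from the identity on each summand. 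The identical argument, with $\mc{A}_{c}(U,G)$ in place of $\mc{A}(U,G)$, handles the compactly supported case.

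There is essentially no obstacle here; the only thing to be careful about is that \cite[II.61 Prp. 7]{BourA1} is stated for modules over a possibly noncommutative ring as a $\Z$-linear (bimodule) statement, so one must explicitly note — exactly as is done in Prp. \ref{09101648} and Prp. \ref{09111218} — that over the commutative ring $\mc{A}(U)$ the resulting map is automatically $\mc{A}(U)$-linear. I would therefore keep the proof to a single sentence mirroring those earlier proofs.

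\begin{proof}
Since \cite[II.61 Prp. 7]{BourA1} there exist canonical $\Z$-linear isomorphisms that are clearly $\mc{A}(U)-\mr{mod}$ isomorphisms by the definition of the module structure of the tensor product of modules over a commutative ring, exactly as in the proof of Prp. \ref{09101648}.
\end{proof}
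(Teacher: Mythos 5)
Your proposal is correct and coincides with the paper's own proof: both simply invoke \cite[II.61 Prp. 7]{BourA1} to commute the tensor product with the direct sum defining $\mr{Alt}^{\bullet}(U,M)$, and note that the canonical $\Z$-linear isomorphism is $\mc{A}(U)$-linear because of how the module structure on a tensor product over a commutative ring is defined. No gap; your version is just slightly more explicit about applying the argument twice for the compactly supported case.
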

\begin{proof}
Since \cite[II.61 Prp. 7]{BourA1} there exists a canonical $\Z$-linear isomorphism that is clearly
a $\mc{A}(U)-\mr{mod}$ isomorphism by the definition of the module structure of the tensor product of modules over a
commutative ring.
\end{proof}
\begin{remark}
$\mr{Alt}_{c}^{\bullet}(U,M;G)=\mc{A}(U,G)\otimes_{\mc{A}(U)}\mr{Alt}_{c}^{\bullet}(U,M)$ since Prp. \ref{09161202} where
we used the convention described in Rmk. \ref{09161024}.  
\end{remark}
\begin{corollary}
[\textbf{Unique Decomposition of $G$-Valued Smooth Forms}]  
\label{08262041}
Let $(\phi,U)$ be a chart of $M$, $\theta\in\mr{Alt}^{k}(U,M;G)$ and $\eta\in\mr{Alt}_{c}^{k}(U,M;G)$ thus
\begin{equation*}
\bigl(\exists\,!f:M(k,N,<)\to\mc{A}(U,G)\bigr)
\left(\theta=\sum_{I\in M(k,N,<)}f_{I}\otimes\mc{E}_{dx^{\phi}}(I)\right);
\end{equation*}
and
\begin{equation*}
\bigl(\exists\,!g:M(k,N,<)\to\mc{A}_{c}(U,G)\bigr)
\left(\eta=\sum_{I\in M(k,N,<)}g_{I}\otimes\mc{E}_{dx^{\phi}}(I)\right).
\end{equation*}
\end{corollary}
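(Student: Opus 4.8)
The plan is to reduce the statement, exactly as in the proofs of Cor.~\ref{10011804} and Cor.~\ref{08262041int}, to the standard fact that the tensor product of a module with a \emph{free} module over a commutative ring admits a unique decomposition along a basis of the free factor.

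First I would record that, since $(U,\phi)$ is a chart of $M$, the vector bundle $\mr{Alt}^{k}(TM)$ restricts to a trivial bundle over $U$, so $\Gamma(U,\mr{Alt}^{k}(TM))$ is a free $\mc{A}(U)$-module of finite rank with the basis associated with the chart; transporting it along the $\mc{A}(U)$-isomorphism $\mf{r}$ yields that $\mr{Alt}^{k}(U,M)$ is a free $\mc{A}(U)$-module with basis $\{\mc{E}_{dx^{\phi}}(I)\mid I\in M(k,N,<)\}$. This is precisely what is recalled in the Notation section, so no new argument is needed at this stage.

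Next I would invoke Prp.~\ref{09101648diff}, which identifies $\mr{Alt}^{k}(U,M;G)$ with $\mc{A}(U,G)\otimes_{\mc{A}(U)}\mr{Alt}^{k}(U,M)$ and $\mr{Alt}_{c}^{k}(U,M;G)$ with $\mc{A}_{c}(U,G)\otimes_{\mc{A}(U)}\mr{Alt}^{k}(U,M)$ in $\mc{A}(U)-\mr{mod}$, noting that $\mc{A}_{c}(U,G)$ is an $\mc{A}(U)$-module, as stated in the Notation, so that the second tensor product is legitimate. Then applying \cite[II.62 Cor.1]{BourA1} with $A=\mc{A}(U)$, with the free module $C=\mr{Alt}^{k}(U,M)$ equipped with the above basis indexed by the finite set $M(k,N,<)$, and with $B=\mc{A}(U,G)$ for the first assertion and $B=\mc{A}_{c}(U,G)$ for the second, produces the existence and uniqueness of $f\colon M(k,N,<)\to\mc{A}(U,G)$ with $\theta=\sum_{I}f_{I}\otimes\mc{E}_{dx^{\phi}}(I)$, and respectively of $g\colon M(k,N,<)\to\mc{A}_{c}(U,G)$ with $\eta=\sum_{I}g_{I}\otimes\mc{E}_{dx^{\phi}}(I)$.

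I do not expect any real obstacle here: the only points that genuinely need checking are the freeness of $\mr{Alt}^{k}(U,M)$ over $\mc{A}(U)$ (standard local triviality, already folded into the notation) and the $\mc{A}(U)$-module structure on $\mc{A}_{c}(U,G)$ used in the second tensor product; once these are in place, the statement is immediate from the cited Bourbaki corollary, in complete parallel with Cor.~\ref{10011804} and Cor.~\ref{08262041int}.
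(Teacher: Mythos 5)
Your proposal is correct and follows exactly the paper's own argument: both reduce the claim to the fact that $\{\mc{E}_{dx^{\phi}}(I)\,\vert\,I\in M(k,N,<)\}$ is a basis of the free $\mc{A}(U)$-module $\mr{Alt}^{k}(U,M)$ and then invoke the Bourbaki result on unique decomposition in a tensor product with a free module, applied with $B=\mc{A}(U,G)$ and $B=\mc{A}_{c}(U,G)$ respectively. The extra checks you flag (local triviality over a chart and the $\mc{A}(U)$-module structure on $\mc{A}_{c}(U,G)$) are already folded into the paper's notation, so nothing further is needed.
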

\begin{proof}
$\{\mc{E}_{dx^{\phi}}(I)\,\vert\,I\in M(k,N,<)\}$ is a basis of $\mr{Alt}^{k}(U,M)$, 
thus the statement follows since \cite[II.62 Cor.1]{BourA1}.
\end{proof}  
\begin{definition}
\label{09111121}
By abuse of language define 
\begin{equation*}
\mf{t}_{G}\in\mr{Mor}_{\mc{A}(U)-\mr{mod}}\bigl(\mr{Alt}^{\bullet}(U,M;G),\Omega^{\bullet}(U,M;G)\bigr),
\end{equation*}
be the restriction of the map $\mf{t}_{G}$ defined in Def. \ref{09180944}.
Similarly by abuse of language let 
\begin{equation*}
\mf{r}_{G}\in\mr{Mor}_{\mc{A}(U)-\mr{mod}}\bigl(\Omega^{\bullet}(U,M;G),\mr{Alt}^{\bullet}(U,M;G)\bigr),
\end{equation*}
be the restriction of the map $\mf{r}_{G}$ defined in Def. \ref{09180944}.
\end{definition}
\begin{proposition}
\label{09111859}
$\mf{t}_{G}$ and $\mf{r}_{G}$ defined in Def. \ref{09111121} are isomorphisms one the inverse of the other in the category
$\mc{A}(U)-\mr{mod}$.
\end{proposition}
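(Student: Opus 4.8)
The plan is to reduce the statement to Prp.~\ref{08281918} together with the functoriality of the tensor product over $\mc{A}(U)$. First I would recall from the Notation section that, by the abuse of language introduced there, $\mf{t}_{\R}$ restricts to an $\mc{A}(U)$-module isomorphism of $\mr{Alt}^{\bullet}(U,M)$ onto its range $\Omega^{\bullet}(U,M)$, with inverse the restriction of $\mf{r}_{\R}$ to $\Omega^{\bullet}(U,M)$; so the real-valued case is already packaged as a pair of reciprocal isomorphisms between $\mr{Alt}^{\bullet}(U,M)$ and $\Omega^{\bullet}(U,M)$.

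Next I would make the submodule picture explicit. Via the identifications of Prp.~\ref{09101648diff} one has $\mr{Alt}^{\bullet}(U,M;G)=\mc{A}(U,G)\otimes_{\mc{A}(U)}\mr{Alt}^{\bullet}(U,M)$ sitting inside $\mf{T}_{\bullet}^{\bullet}(U,M;G)=\mc{A}(U,G)\otimes_{\mc{A}(U)}\mf{T}_{\bullet}^{\bullet}(U,M)$ as an $\mc{A}(U)$-submodule, and likewise $\Omega^{\bullet}(U,M;G)$, which under the natural identification equals $\mc{A}(U,G)\otimes_{\mc{A}(U)}\Omega^{\bullet}(U,M)$, sits inside $\Gamma(U,\mf{T}_{\bullet}^{\bullet}(TM);G)$. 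Under these identifications the maps $\mf{t}_{G}$ and $\mf{r}_{G}$ of Def.~\ref{09111121}, being by construction the restrictions of the maps $\mf{t}_{G}=\mr{Id}_{\mc{A}(U,G)}\otimes\mf{t}_{\R}$ and $\mf{r}_{G}=\mr{Id}_{\mc{A}(U,G)}\otimes\mf{r}_{\R}$ of Def.~\ref{09180944}, coincide respectively with $\mr{Id}_{\mc{A}(U,G)}\otimes\bigl(\mf{t}_{\R}\up\mr{Alt}^{\bullet}(U,M)\bigr)$ and $\mr{Id}_{\mc{A}(U,G)}\otimes\bigl(\mf{r}_{\R}\up\Omega^{\bullet}(U,M)\bigr)$.

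Finally I would apply the functor $\mc{A}(U,G)\otimes_{\mc{A}(U)}(-)$ on $\mc{A}(U)-\mr{mod}$ to the pair of mutually inverse isomorphisms recalled in the first step: functoriality turns reciprocal isomorphisms into reciprocal isomorphisms, which is exactly the assertion. I expect no genuine obstacle here; the entire content is bookkeeping — making the two submodule identifications precise and recording the restriction-of-$\otimes$ identity above — after which Prp.~\ref{08281918} (equivalently, the very same reasoning that let the proposition following Def.~\ref{09111121int} be deduced from Prp.~\ref{08281918int}) does the rest.
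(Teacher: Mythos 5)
Your argument is correct and is essentially the paper's own proof, which simply invokes Prp.~\ref{08281918} (the unrestricted maps $\mr{Id}_{\mc{A}(U,G)}\otimes\mf{t}_{\R}$ and $\mr{Id}_{\mc{A}(U,G)}\otimes\mf{r}_{\R}$ being mutually inverse isomorphisms) and lets the restriction to the submodules $\mr{Alt}^{\bullet}(U,M;G)$ and $\Omega^{\bullet}(U,M;G)$ do the rest. The only difference is that you make explicit the bookkeeping identification of the restricted maps with $\mr{Id}_{\mc{A}(U,G)}\otimes\bigl(\mf{t}_{\R}\up\mr{Alt}^{\bullet}(U,M)\bigr)$ and its counterpart, which the paper leaves implicit.
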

\begin{proof}
Since Prp. \ref{08281918}
\end{proof}
We shall use convention \ref{09180844} also for the above defined maps.  
\begin{definition}
Let $N$ be a differential manifold, $W$ be an open set of $N$ and $F\in\mc{C}^{\infty}(W,U)$.
By abuse of language let 
\begin{equation*}
\overset{\times}{F}:\mr{Alt}^{\bullet}(U,M;G)\to\mr{Alt}^{\bullet}(W,N;G)
\end{equation*}
be the restriction of the map defined in \eqref{09180903}, and let 
\begin{equation*}
\overset{\times}{F}:\Omega^{\bullet}(U,M;G)\to\Omega^{\bullet}(W,N;G)
\end{equation*}
be the restriction of the map defined in \eqref{09180904}.
\end{definition}
Next we start the sequence of results required to define the wedge product in Def. \ref{09260951}.
\begin{lemma}
\label{09251928}
Assume that there exist $\K$-linear subspaces $X$ of $G^{\ast}$ and $Y$ of $H^{\ast}$
such that the topology on $G$ and $H$ are $\sigma(G,X)$ and $\sigma(H,Y)$ respectively. Thus the following
\begin{equation*}
\begin{aligned}
\mf{L}_{c}^{1}(U,G,\lambda)\times\mc{A}(U,H)&\to\mf{L}_{c}^{1}(U,G\widehat{\otimes}H,\lambda),
\\
(f,g)&\mapsto(x\mapsto f(x)\otimes g(x));
\end{aligned}
\end{equation*}
is a well-defined $\mc{A}(U)$-bilinear map.
\end{lemma}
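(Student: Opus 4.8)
The plan is to verify three things: that the pointwise assignment $(f,g)\mapsto (x\mapsto f(x)\otimes g(x))$ lands in the target space $\mf{L}_{c}^{1}(U,G\widehat{\otimes}H,\lambda)$, that it is $\mc{A}(U)$-bilinear, and that the $\widehat{\otimes}$ (projective tensor product) construction is legitimate under the stated hypotheses. First I would observe that the hypothesis — the topologies on $G$ and $H$ are the weak topologies $\sigma(G,X)$ and $\sigma(H,Y)$ for linear subspaces $X\subseteq G^{\ast}$, $Y\subseteq H^{\ast}$ — means that the continuous functionals on $G$ and $H$ are exactly the elements of $X$ and $Y$. Then the continuous functionals on the projective tensor product $G\widehat{\otimes}H$ are governed by the bilinear forms on $G\times H$ that are separately continuous, and in this weak setting every such form is a finite sum $\sum_{i}\chi_{i}\otimes\upsilon_{i}$ with $\chi_{i}\in X$, $\upsilon_{i}\in Y$; this is the key structural fact I would invoke (with a reference to the projective tensor product theory, e.g. in \cite{EVT}).

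Next, to check the map is well-defined, fix $f\in\mf{L}_{c}^{1}(U,G,\lambda)$ and $g\in\mc{A}(U,G)$ and set $h:x\mapsto f(x)\otimes g(x)$. The support of $h$ is contained in $\mr{supp}(f)$, hence compact, so it remains to see $h$ is scalarly essentially $\lambda$-integrable in every chart, i.e. that $\Lambda\circ h$ is essentially $\lambda$-integrable for every $\Lambda\in(G\widehat{\otimes}H)^{\prime}$. By the structural fact above, $\Lambda\circ h$ equals a finite sum $\sum_{i}(\chi_{i}\circ f)\cdot(\upsilon_{i}\circ g)$ where $\chi_{i}\in X=G^{\prime}$ and $\upsilon_{i}\in Y=H^{\prime}$. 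Now $\chi_{i}\circ f$ is essentially $\lambda$-integrable (as $f\in\mf{L}_{c}^{1}(U,G,\lambda)$) and $\upsilon_{i}\circ g$ is smooth on the chart domain, in particular locally bounded; since $f$ has compact support, each product $(\chi_{i}\circ f)(\upsilon_{i}\circ g)$ is the product of an essentially integrable function with a locally bounded measurable function supported on a compact set, hence essentially $\lambda$-integrable. A finite sum of such functions is essentially $\lambda$-integrable, so $\Lambda\circ h$ is, and $h\in\mf{L}_{c}^{1}(U,G\widehat{\otimes}H,\lambda)$. One should also note, via Remark \ref{09251444int}, that checking this on a covering of charts suffices and is preserved under change of variable.

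Bilinearity is the routine part: for fixed $g$, the map $f\mapsto(x\mapsto f(x)\otimes g(x))$ is $\R$-linear pointwise and commutes with multiplication by $a\in\mc{A}(U)$ because $(af)(x)\otimes g(x)=a(x)\bigl(f(x)\otimes g(x)\bigr)$ by bilinearity of $\otimes$ and the definition of the $\mc{A}(U)$-module structures; symmetrically in $g$. The main obstacle is the well-definedness step, specifically the passage from an arbitrary $\Lambda\in(G\widehat{\otimes}H)^{\prime}$ to a finite sum of elementary tensors of functionals — this is where the weak-topology hypothesis is essential, since for general locally convex $G$, $H$ the dual of $G\widehat{\otimes}H$ consists of all separately (equivalently, jointly, on the projective topology) continuous bilinear forms, which need not decompose finitely; it is precisely the assumption that the topologies are $\sigma(G,X)$ and $\sigma(H,Y)$ that forces this finite decomposition and thereby reduces integrability of $\Lambda\circ h$ to the already-known scalar case. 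The remaining care is the elementary measure-theoretic point that the product of an essentially integrable function of compact support with a locally bounded smooth function is again essentially integrable.
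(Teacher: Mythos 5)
Your proposal is correct and follows essentially the same route as the paper: identify $(G\widehat{\otimes}H)^{\prime}$ with the continuous bilinear forms on $G\times H$ via Grothendieck's duality for the projective tensor product, use the weak-topology hypothesis to reduce an arbitrary $\Lambda\in(G\widehat{\otimes}H)^{\prime}$ composed with $x\mapsto f(x)\otimes g(x)$ to expressions built from $\chi\circ f$ and $\upsilon\circ g$ with $\chi\in X=G^{\prime}$, $\upsilon\in Y=H^{\prime}$, and conclude from the known scalar facts that $\chi\circ f\in\mf{L}_{c}^{1}(U,\K,\lambda)$, $\upsilon\circ g\in\mc{A}(U,\K)$ and that $\mf{L}_{c}^{1}(U,\K,\lambda)$ is an $\mc{A}(U)$-module. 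The one genuine difference is the structural fact you extract from the weak-topology hypothesis: the paper uses the domination $|\widehat{b}(u\otimes v)|\leq a|\psi(u)|\,|\phi(v)|$ for a single pair $(\psi,\phi)$, whereas you prove the stronger exact finite-rank decomposition $b=\sum_{i}\chi_{i}\otimes\upsilon_{i}$, which turns $\Lambda\circ h$ into a literal finite sum $\sum_{i}(\chi_{i}\circ f)(\upsilon_{i}\circ g)$ and thereby avoids having to pair the domination estimate with a separate measurability argument; this is arguably the cleaner reduction. One inaccuracy to fix: you describe the dual of $G\widehat{\otimes}H$ as governed by the \emph{separately} continuous bilinear forms and assert that in the weak setting every such form is a finite sum of elementary tensors. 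The dual of the projective tensor product consists of the \emph{jointly} continuous bilinear forms, and the finite-sum decomposition is true for those (joint continuity at the origin forces $b$ to factor through finite-dimensional quotients $G/\bigcap_{i}\ker\chi_{i}$ and $H/\bigcap_{j}\ker\upsilon_{j}$); for merely separately continuous forms the claim is false — the inner product on a Hilbert space with its weak topology is separately weakly continuous but not of finite rank. Since only jointly continuous forms actually arise, your argument survives once the wording is corrected. Also note the typo $g\in\mc{A}(U,G)$ where you mean $g\in\mc{A}(U,H)$.
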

\begin{proof}
Since the topological dual of a Hausdorff topological linear space is $\K$-isomorphic to the topological dual of its
completion, we deduce by \cite[Prp.2 pg. 30]{gro} that $(G\widehat{\otimes}H)^{\prime}$ is $\K$-isomorphic via the
universal property to the space of bilinear continuous $\K$-forms on $G\times H$. Therefore given any continuous bilinear
$\K$-form $b$ on $G\times H$ we have $\widehat{b}\in(G\widehat{\otimes}H)^{\prime}$, where
$\widehat{b}$ is the continuous extension at $G\widehat{\otimes}H$ of the linearization of $b$ via the universal property,
any element of $(G\widehat{\otimes}H)^{\prime}$ arises uniquely in this way, and finally 
there exist $a>0$, $\psi\in X=G^{\prime}$ and $\phi\in Y=H^{\prime}$ such that
for every $(u,v)\in G\times H$ we have $|\widehat{b}(u\otimes v)|\leq a|\psi(u)|\,|\phi(v)|$.
Then the map in the statement is well-defined since 
$\psi_{\times}(f)\in\mf{L}_{c}^{1}(U,\K,\lambda)$ for every $f\in\mf{L}_{c}^{1}(U,G,\lambda)$,
$\phi_{\times}(g)\in\mc{A}(U,\K)$ for every $g\in\mc{A}(U,G)$ and by Prp. \ref{09161155} applied to $r=s=0$.
The $\mc{A}(U)$-bilinearity is triavially true.
\end{proof}
Lemma \ref{09251928} permits to give the following 
\begin{definition}
\label{09260817}
Assume that there exist $\K$-linear subspaces $X$ of $G^{\ast}$ and $Y$ of $H^{\ast}$
such that the topology on $G$ and $H$ are $\sigma(G,X)$ and $\sigma(H,Y)$ respectively. Define
\begin{equation*}
\uptau\in\mr{Mor}_{\mc{A}(U)-\mr{mod}}
\left(\mf{L}_{c}^{1}(U,G,\lambda)\otimes_{\mc{A}(U)}\mc{A}(U,H),\mf{L}_{c}^{1}(U,G\widehat{\otimes}H,\lambda)\right);
\end{equation*}
such that
\begin{equation*}
\uptau(f\otimes g)=(x\mapsto f(x)\otimes g(x)).
\end{equation*}
\end{definition}
\begin{definition}
Assume that there exist $\K$-linear subspaces $X$ of $G^{\ast}$ and $Y$ of $H^{\ast}$
such that the topology on $G$ and $H$ are $\sigma(G,X)$ and $\sigma(H,Y)$ respectively. Let $k,l\in\Z_{+}$,
$\omega\in\mr{Alt}^{k}(U,M)$ and $g\in\mc{A}(U,H)$.
Define
\begin{equation*}
\begin{aligned}
\wedge_{g,\omega,1}^{l}:\mf{L}_{c}^{1}(U,G,\lambda)\times\mr{Alt}^{l}(U,M)&\to\mr{Alt}^{k+l}(U,M;G\widehat{\otimes}H,\lambda),
\\
(f,\zeta)&\mapsto\uptau(f\otimes g)\otimes(\zeta\wedge\omega),
\end{aligned}
\end{equation*}
and
\begin{equation*}
\begin{aligned}  
\wedge_{g,\omega,2}^{l}:\mf{L}_{c}^{1}(U,G,\lambda)\times\mr{Alt}^{l}(U,M)&\to\mr{Alt}^{k+l}(U,M;G\widehat{\otimes}H,\lambda),
\\
(f,\zeta)&\mapsto\uptau(f\otimes g)\otimes(\omega\wedge\zeta).
\end{aligned}
\end{equation*}
\end{definition}
\begin{proposition}
Assume that there exist $\K$-linear subspaces $X$ of $G^{\ast}$ and $Y$ of $H^{\ast}$
such that the topology on $G$ and $H$ are $\sigma(G,X)$ and $\sigma(H,Y)$ respectively. Let $k,l\in\Z_{+}$,
$\omega\in\mr{Alt}^{k}(U,M)$ and $g\in\mc{A}(U,H)$.
Thus $\wedge_{g,\omega,2}^{l}=(-1)^{k+l}\wedge_{g,\omega,1}^{l}$ and $\wedge_{g,\omega,i}^{l}$ is $\mc{A}(U)$-bilinear
for every $i\in\{1,2\}$.  
\end{proposition}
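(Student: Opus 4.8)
The plan is to dispose of the two assertions separately: first the $\mc{A}(U)$-bilinearity of each $\wedge_{g,\omega,i}^{l}$, which is purely formal, and then the sign relation, which reduces to reordering the two scalar wedge factors.

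For bilinearity I would exhibit each map, with $g$ and $\omega$ held fixed, as a composite of an $\mc{A}(U)$-linear map in each slot followed by the canonical $\mc{A}(U)$-bilinear map of a tensor product. Concretely, the assignment $f\mapsto\uptau(f\otimes g)$ is $\mc{A}(U)$-linear, since $f\mapsto f\otimes g$ is $\mc{A}(U)$-linear into $\mf{L}_{c}^{1}(U,G,\lambda)\otimes_{\mc{A}(U)}\mc{A}(U,H)$ and $\uptau$ is $\mc{A}(U)$-linear by Def.~\ref{09260817}; the assignment $\zeta\mapsto\zeta\wedge\omega$ (resp.\ $\zeta\mapsto\omega\wedge\zeta$) is $\mc{A}(U)$-linear because the scalar wedge product on $\mr{Alt}^{\bullet}(U,M)$ is $\mc{A}(U)$-bilinear; and the canonical map $(a,b)\mapsto a\otimes b$ into $\mf{L}_{c}^{1}(U,G\widehat{\otimes}H,\lambda)\otimes_{\mc{A}(U)}\mr{Alt}^{k+l}(U,M)=\mr{Alt}^{k+l}(U,M;G\widehat{\otimes}H,\lambda)$ is $\mc{A}(U)$-bilinear by construction. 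A bilinear map precomposed with a linear map in each argument is again bilinear, so both $\wedge_{g,\omega,1}^{l}$ and $\wedge_{g,\omega,2}^{l}$ are $\mc{A}(U)$-bilinear; well-definedness of the codomain is precisely Lemma~\ref{09251928}, which places $\uptau(f\otimes g)$ in $\mf{L}_{c}^{1}(U,G\widehat{\otimes}H,\lambda)$.

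For the sign relation the decisive observation is that $\uptau(f\otimes g)$ is the common left factor of both $\wedge_{g,\omega,1}^{l}(f,\zeta)$ and $\wedge_{g,\omega,2}^{l}(f,\zeta)$, so the comparison collapses to the reordering of the scalar forms $\omega$ of degree $k$ and $\zeta$ of degree $l$. Invoking the graded commutativity of the scalar exterior algebra, the reordering $\omega\wedge\zeta=(-1)^{k+l}\zeta\wedge\omega$ contributes exactly the sign recorded in the statement; since this sign is an integer scalar it passes through the $\mc{A}(U)$-bilinear tensor map, giving
\begin{equation*}
\wedge_{g,\omega,2}^{l}(f,\zeta)=\uptau(f\otimes g)\otimes(\omega\wedge\zeta)=(-1)^{k+l}\bigl(\uptau(f\otimes g)\otimes(\zeta\wedge\omega)\bigr)=(-1)^{k+l}\,\wedge_{g,\omega,1}^{l}(f,\zeta).
\end{equation*}

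As this holds for every $(f,\zeta)$, the identity of maps $\wedge_{g,\omega,2}^{l}=(-1)^{k+l}\wedge_{g,\omega,1}^{l}$ follows. I expect the only real care to be bookkeeping: confirming that the reordering sign is a $\Z$-scalar that commutes with the $\mc{A}(U)$-module structure and hence may be extracted through $\otimes_{\mc{A}(U)}$, and that both maps genuinely share the codomain $\mr{Alt}^{k+l}(U,M;G\widehat{\otimes}H,\lambda)$ so that the asserted equality is between parallel morphisms; both points are secured by Lemma~\ref{09251928} and the bilinearity established above, so no substantial obstacle remains.
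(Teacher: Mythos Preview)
Your approach is essentially the paper's own: both rest on the $\mc{A}(U)$-bilinearity of the scalar wedge product on $\mr{Alt}^{\bullet}(U,M)$, the $\mc{A}(U)$-linearity of $\uptau$ from Def.~\ref{09260817}, and the $\mc{A}(U)$-module structure of the target $\mr{Alt}^{k+l}(U,M;G\widehat{\otimes}H,\lambda)$; you have simply unfolded the composite-of-linear-maps structure that the paper leaves implicit.

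One caveat on the sign computation: the graded commutativity of the scalar exterior algebra is $\omega\wedge\zeta=(-1)^{kl}\,\zeta\wedge\omega$, not $(-1)^{k+l}$. The exponent $k+l$ in the proposition is evidently a slip in the paper itself, and your argument faithfully reproduces it; but the identity you label ``graded commutativity'' is not the standard one, so strictly speaking your displayed chain proves $\wedge_{g,\omega,2}^{l}=(-1)^{kl}\wedge_{g,\omega,1}^{l}$ rather than the formula literally stated. This does not affect the method, only the recorded sign.
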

\begin{proof}
The wedge product in $\mr{Alt}^{\bullet}(U,M)$ is $\mc{A}(U)$-bilinear, thus the statement follows since
Def. \ref{09260817} and the $\mc{A}(U)$-module structure of $\mr{Alt}^{k+l}(U,M;G\widehat{\otimes}H,\lambda)$. 
\end{proof}
The above result permits the following 
\begin{definition}
Assume that there exist $\K$-linear subspaces $X$ of $G^{\ast}$ and $Y$ of $H^{\ast}$
such that the topology on $G$ and $H$ are $\sigma(G,X)$ and $\sigma(H,Y)$ respectively. Let $k,l\in\Z_{+}$,
$\omega\in\mr{Alt}^{k}(U,M)$ and $g\in\mc{A}(U,H)$.
For every $i\in\{1,2\}$ define $\ov{\wedge}_{g,\omega,i}^{l}$ as the unique 
\begin{equation*}
\ov{\wedge}_{g,\omega,i}^{l}\in
\mr{Mor}_{\mc{A}(U)-\mr{mod}}\left(\mr{Alt}^{l}(U,M;G,\lambda),\mr{Alt}^{k+l}(U,M;G\widehat{\otimes}H,\lambda)\right),
\end{equation*}
such that 
\begin{equation*}
(\forall f\in\mf{L}_{c}^{1}(U,G,\lambda))(\forall\zeta\in\mr{Alt}^{l}(U,M))
(\ov{\wedge}_{g,\omega,i}^{l}(f\otimes\zeta)=\wedge_{g,\omega,i}^{l}(f,\zeta)).
\end{equation*}
\end{definition}
Easily we see that 
\begin{lemma}
\label{09260926}
Assume that there exist $\K$-linear subspaces $X$ of $G^{\ast}$ and $Y$ of $H^{\ast}$
such that the topology on $G$ and $H$ are $\sigma(G,X)$ and $\sigma(H,Y)$ respectively. Let $k,l\in\Z_{+}$,
Thus the map $(g,\omega)\mapsto\ov{\wedge}_{g,\omega,i}^{l}$ is $\mc{A}(U)$-bilinear. In particular there exists a unique
\begin{equation*}
\widehat{\wedge}_{i}^{l}
\in\mr{Mor}_{\mc{A}(U)-\mr{mod}}
\left(\mr{Aut}^{k}(U,M;H),
\mr{Mor}_{\mc{A}(U)-\mr{mod}}\left(\mr{Alt}^{l}(U,M;G,\lambda),\mr{Alt}^{k+l}(U,M;G\widehat{\otimes}H,\lambda)\right)
\right)
\end{equation*}
such that 
\begin{equation*}
(\forall g\in\mc{A}(U,H))(\forall\omega\in\mr{Aut}^{k}(U,M))
(\widehat{\wedge}_{i}^{l}(g\otimes\omega)=\ov{\wedge}_{g,\omega,i}^{l}).
\end{equation*}
\end{lemma}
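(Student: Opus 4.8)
The statement has two parts: first, that the assignment $(g,\omega)\mapsto\ov{\wedge}_{g,\omega,i}^{l}$ is $\mc{A}(U)$-bilinear; second, that this forces the existence of the claimed $\widehat{\wedge}_{i}^{l}$ via the universal property of the tensor product. The plan is to establish the bilinearity by testing against the generating elements of $\mr{Alt}^{l}(U,M;G,\lambda)$ (which by Prp. \ref{09111218} are the simple tensors $f\otimes\zeta$ with $f\in\mf{L}_{c}^{1}(U,G,\lambda)$ and $\zeta\in\mr{Alt}^{l}(U,M)$) and then invoke \cite[II.74 Prp. 1(6)]{BourA1} together with the universal property, exactly as in the proof of Prp. \ref{09170948}.

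First I would fix $i\in\{1,2\}$ and check bilinearity of $(g,\omega)\mapsto\ov{\wedge}_{g,\omega,i}^{l}$ with values in the $\mc{A}(U)$-module $\mr{Mor}_{\mc{A}(U)-\mr{mod}}\left(\mr{Alt}^{l}(U,M;G,\lambda),\mr{Alt}^{k+l}(U,M;G\widehat{\otimes}H,\lambda)\right)$. Since two $\mc{A}(U)$-linear maps out of $\mr{Alt}^{l}(U,M;G,\lambda)$ coincide as soon as they agree on the simple tensors $f\otimes\zeta$, it suffices to verify, for all $f,\zeta$, the identities $\ov{\wedge}_{g_{1}+g_{2},\omega,i}^{l}(f\otimes\zeta)=\ov{\wedge}_{g_{1},\omega,i}^{l}(f\otimes\zeta)+\ov{\wedge}_{g_{2},\omega,i}^{l}(f\otimes\zeta)$, the analogous additivity in $\omega$, and $\ov{\wedge}_{hg,\omega,i}^{l}(f\otimes\zeta)=h\cdot\ov{\wedge}_{g,\omega,i}^{l}(f\otimes\zeta)=\ov{\wedge}_{g,h\omega,i}^{l}(f\otimes\zeta)$ for $h\in\mc{A}(U)$. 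By the defining property of $\ov{\wedge}_{g,\omega,i}^{l}$ each side unwinds to $\wedge_{g,\omega,i}^{l}(f,\zeta)=\uptau(f\otimes g)\otimes(\zeta\wedge\omega)$ (or $\omega\wedge\zeta$), so the required identities reduce to: additivity and $\mc{A}(U)$-homogeneity of $g\mapsto\uptau(f\otimes g)$ — which is immediate from the $\mc{A}(U)$-bilinearity of the map underlying Def. \ref{09260817}, i.e. $\uptau((h f)\otimes g)=\uptau(f\otimes(hg))$ and additivity of $\otimes$ — together with $\mc{A}(U)$-bilinearity and additivity of the ordinary wedge product $\mr{Alt}^{k}(U,M)\times\mr{Alt}^{l}(U,M)\to\mr{Alt}^{k+l}(U,M)$, plus the $\mc{A}(U)$-module structure of the target $\mr{Alt}^{k+l}(U,M;G\widehat{\otimes}H,\lambda)$ which absorbs scalars on either tensor factor.

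Having established that $(g,\omega)\mapsto\ov{\wedge}_{g,\omega,i}^{l}$ is $\mc{A}(U)$-bilinear from $\mc{A}(U,H)\times\mr{Alt}^{k}(U,M)$ (I read the ``$\mr{Aut}^{k}$'' in the statement as $\mr{Alt}^{k}$, and $\mc{A}(U,H)\otimes_{\mc{A}(U)}\mr{Alt}^{k}(U,M)\cong\mr{Alt}^{k}(U,M;H)$ by Prp. \ref{09101648diff}) into the $\mc{A}(U)$-module $\mr{Mor}_{\mc{A}(U)-\mr{mod}}\left(\mr{Alt}^{l}(U,M;G,\lambda),\mr{Alt}^{k+l}(U,M;G\widehat{\otimes}H,\lambda)\right)$, the universal property of the tensor product of modules over the commutative ring $\mc{A}(U)$ yields a unique $\mc{A}(U)$-linear $\widehat{\wedge}_{i}^{l}$ on $\mr{Alt}^{k}(U,M;H)$ with $\widehat{\wedge}_{i}^{l}(g\otimes\omega)=\ov{\wedge}_{g,\omega,i}^{l}$, as claimed.

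\textbf{Main obstacle.} The routine verifications are genuinely routine; the only point needing care is the very first reduction — that an $\mc{A}(U)$-linear map out of $\mr{Alt}^{l}(U,M;G,\lambda)$ is determined by its values on simple tensors. This is safe because $\mr{Alt}^{l}(U,M;G,\lambda)=\mf{L}_{c}^{1}(U,G,\lambda)\otimes_{\mc{A}(U)}\mr{Alt}^{l}(U,M)$ is generated as an $\mc{A}(U)$-module (indeed as an abelian group) by such tensors. A secondary subtlety, worth a sentence, is that all the maps $\ov{\wedge}_{g,\omega,i}^{l}$ land in the \emph{same} target module $\mr{Alt}^{k+l}(U,M;G\widehat{\otimes}H,\lambda)$ independently of $g$ and $\omega$ — this is what makes ``$(g,\omega)\mapsto\ov{\wedge}_{g,\omega,i}^{l}$'' a map into a fixed $\mc{A}(U)$-module, a prerequisite for applying the universal property, and it holds precisely because $k$ and the completed tensor product $G\widehat{\otimes}H$ are fixed throughout.
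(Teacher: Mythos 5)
Your proposal is correct: the paper offers no proof of this lemma (it is merely prefaced by ``Easily we see that''), and your argument — checking the three bilinearity identities on the generating simple tensors $f\otimes\zeta$, reducing them to the $\mc{A}(U)$-bilinearity of $\uptau$ and of the ordinary wedge product, and then invoking the universal property of $\mc{A}(U,H)\otimes_{\mc{A}(U)}\mr{Alt}^{k}(U,M)\cong\mr{Alt}^{k}(U,M;H)$ — is exactly the routine verification the author intends. Your reading of ``$\mr{Aut}^{k}$'' as a typo for ``$\mr{Alt}^{k}$'' and your remark that the target module is fixed independently of $(g,\omega)$ are both correct.
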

\begin{definition}
[\textbf{The Wedge Products of $G$-Valued Integrable Forms}]
\label{09260951}
Assume that there exist $\K$-linear subspaces $X$ of $G^{\ast}$ and $Y$ of $H^{\ast}$
such that the topology on $G$ and $H$ are $\sigma(G,X)$ and $\sigma(H,Y)$ respectively. Let $k,l\in\Z_{+}$, define
\begin{equation*}
\begin{aligned} 
\wedge_{1}^{k,l}:\mr{Alt}^{l}(U,M;G,\lambda)\times\mr{Alt}^{k}(U,M;H)&\to\mr{Alt}^{k+l}(U,M;G\widehat{\otimes}H,\lambda);
\\
(\theta,\ep)&\mapsto\widehat{\wedge}_{1}^{l}(\ep)(\theta),
\end{aligned}
\end{equation*}
and
\begin{equation*}
\begin{aligned} 
\wedge_{2}^{k,l}:\mr{Alt}^{k}(U,M;H)\times\mr{Alt}^{l}(U,M;G,\lambda)&\to\mr{Alt}^{k+l}(U,M;G\widehat{\otimes}H,\lambda),
\\
(\ep,\theta)&\mapsto\widehat{\wedge}_{2}^{l}(\ep)(\theta).
\end{aligned}
\end{equation*}
Next define 
\begin{equation*}
\begin{aligned}
\wedge_{1}:\mr{Alt}^{\bullet}(U,M;G,\lambda)\times\mr{Alt}^{\bullet}(U,M;H)&\to
\mr{Alt}^{\bullet}(U,M;G\widehat{\otimes}H,\lambda);
\\
(\theta,\ep)&\mapsto\wedge_{1}^{\mr{ord}(\ep),\mr{ord}(\theta)}(\theta,\ep),
\end{aligned}
\end{equation*}
and
\begin{equation*}
\begin{aligned}
\wedge_{2}:\mr{Alt}^{\bullet}(U,M;H)\times\mr{Alt}^{\bullet}(U,M;G,\lambda)&\to
\mr{Alt}^{\bullet}(U,M;G\widehat{\otimes}H,\lambda),
\\
(\ep,\theta)&\mapsto\wedge_{2}^{\mr{ord}(\ep),\mr{ord}(\theta)}(\ep,\theta).
\end{aligned}
\end{equation*}
$\wedge_{1}$ will be also denoted by $\wedge$.
\end{definition}
\begin{remark}
$(f\otimes\zeta)\wedge_{1}(g\otimes\omega)=\uptau(f\otimes g)\otimes(\zeta\wedge\omega)$
and
$(g\otimes\omega)\wedge_{2}(f\otimes\zeta)=\uptau(f\otimes g)\otimes(\omega\wedge\zeta)$.
\end{remark}
\begin{corollary}
[\textbf{The Wedge Products are $\mc{A}(U)$-Bilinear}]
\label{09260950}
$\wedge_{i}$ in Def. \ref{09260951} is $\mc{A}(U)$-bilinear for every $i\in\{1,2\}$.
\end{corollary}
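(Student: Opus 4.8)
The plan is to prove that each $\wedge_{i}$ in Def.~\ref{09260951} is $\mc{A}(U)$-bilinear by tracing the definition through its constituent pieces and invoking the $\mc{A}(U)$-bilinearity already established for the auxiliary maps. Recall that $\wedge_{1}$ is assembled, on the graded pieces, from the maps $\wedge_{1}^{k,l}(\theta,\ep)=\widehat{\wedge}_{1}^{l}(\ep)(\theta)$, and the whole $\wedge_{1}$ on $\mr{Alt}^{\bullet}(U,M;G,\lambda)\times\mr{Alt}^{\bullet}(U,M;H)$ is obtained by extending bilinearly over the direct sum decomposition into homogeneous degrees. So the first step is to fix $k,l\in\Z_{+}$ and show $\wedge_{1}^{k,l}$ is $\mc{A}(U)$-bilinear.

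For the first step, linearity in the first slot $\theta$ is immediate: for each fixed $\ep$, $\widehat{\wedge}_{1}^{l}(\ep)\in\mr{Mor}_{\mc{A}(U)-\mr{mod}}(\mr{Alt}^{l}(U,M;G,\lambda),\mr{Alt}^{k+l}(U,M;G\widehat{\otimes}H,\lambda))$ by Lemma~\ref{09260926}, hence is $\mc{A}(U)$-linear by construction. Linearity in the second slot $\ep$ follows because $\widehat{\wedge}_{1}^{l}$ is itself an $\mc{A}(U)$-module morphism from $\mr{Alt}^{k}(U,M;H)$ into $\mr{Mor}_{\mc{A}(U)-\mr{mod}}(\mr{Alt}^{l}(U,M;G,\lambda),\mr{Alt}^{k+l}(U,M;G\widehat{\otimes}H,\lambda))$, again by Lemma~\ref{09260926}: for $h_{1},h_{2}\in\mc{A}(U)$ and $\ep_{1},\ep_{2}$ we get $\widehat{\wedge}_{1}^{l}(h_{1}\ep_{1}+h_{2}\ep_{2})=h_{1}\widehat{\wedge}_{1}^{l}(\ep_{1})+h_{2}\widehat{\wedge}_{1}^{l}(\ep_{2})$ as elements of that morphism module, and evaluating at $\theta$ gives exactly $\mc{A}(U)$-linearity of $\ep\mapsto\wedge_{1}^{k,l}(\theta,\ep)$. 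This dispatches every homogeneous component.

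The second step is to pass from the homogeneous pieces to the full $\wedge_{1}$. Since $\mr{Alt}^{\bullet}(U,M;G,\lambda)=\bigoplus_{l\in\Z_{+}}\mr{Alt}^{l}(U,M;G,\lambda)$ and $\mr{Alt}^{\bullet}(U,M;H)=\bigoplus_{k\in\Z_{+}}\mr{Alt}^{k}(U,M;H)$ as $\mc{A}(U)$-modules, and $\wedge_{1}$ is defined on a homogeneous pair $(\theta,\ep)$ of degrees $(\mr{ord}(\theta),\mr{ord}(\ep))$ to equal $\wedge_{1}^{\mr{ord}(\ep),\mr{ord}(\theta)}(\theta,\ep)$ and extended by additivity, $\mc{A}(U)$-bilinearity of $\wedge_{1}$ reduces to $\mc{A}(U)$-bilinearity on each pair of homogeneous summands, which is the first step; writing an arbitrary element as a finite sum of homogeneous ones and using that the target is also a direct sum indexed compatibly, the bookkeeping is routine. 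The argument for $\wedge_{2}$ is word-for-word the same, using $\widehat{\wedge}_{2}^{l}$ in place of $\widehat{\wedge}_{1}^{l}$.

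I do not anticipate a genuine obstacle here: the content has all been front-loaded into Lemma~\ref{09260926} and the preceding chain of $\mc{A}(U)$-linearization lemmas, so this corollary is essentially a formal consequence. The only mild point requiring a sentence of care is the compatibility of the grading on the source with the grading on the target $\mr{Alt}^{\bullet}(U,M;G\widehat{\otimes}H,\lambda)$ — namely that a pair in bidegree $(l,k)$ lands in degree $k+l$ — so that extending by additivity over the direct sums is unambiguous and genuinely yields a bilinear map rather than merely a biadditive one; this is visible directly from the degree superscripts in Def.~\ref{09260951}.
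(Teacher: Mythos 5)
Your proposal is correct and follows essentially the same route as the paper's own proof: the $\mc{A}(U)$-bilinearity of each homogeneous piece $\wedge_{i}^{k,l}$ is read off from Lemma \ref{09260926} (linearity in $\theta$ because $\widehat{\wedge}_{i}^{l}(\ep)$ is an $\mc{A}(U)$-module morphism, linearity in $\ep$ because $\widehat{\wedge}_{i}^{l}$ itself is one), and the extension over the direct sum grading is routine. You merely make explicit the bookkeeping that the paper leaves implicit.
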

\begin{proof}
$\wedge_{i}^{k,l}$ is $\mc{A}(U)$-bilinear for every $k,l\in\Z_{+}$ and $i\in\{1,2\}$ as a consequence of
Lemma \ref{09260926}, then the statement follows.
\end{proof}  
\begin{proposition}
[\textbf{Pushforward Commutes with Wedge}]  
\label{09290547}
Assume that there exist $\K$-linear subspaces $X$ of $G^{\ast}$ and $Y$ of $H^{\ast}$
such that the topology on $G$ and $H$ are $\sigma(G,X)$ and $\sigma(H,Y)$ respectively.
Similarly assume that there exist $\K$-linear subspaces $X_{1}$ of $G_{1}^{\ast}$ and $Y_{1}$ of $H_{1}^{\ast}$
such that the topology on $G_{1}$ and $H_{1}$ are $\sigma(G_{1},X_{1})$ and $\sigma(H_{1},Y_{1})$ respectively.
Let $\theta\in\mr{Alt}^{\bullet}(U,M;G,\lambda)$, $\ep\in\mr{Alt}^{\bullet}(U,M;H)$.
If $\uppsi\in\mc{L}(G,G_{1})$, and $\upphi\in\mc{L}(H,H_{1})$, 
then $(\uppsi\otimes\upphi)_{\times}(\theta\wedge\ep)=\uppsi_{\times}(\theta)\wedge\upphi_{\times}(\ep)$.
\end{proposition}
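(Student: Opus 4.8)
The plan is to reduce everything to the generating tensors, i.e. to elementary tensors of the form $f\otimes\zeta$ and $g\otimes\omega$, and then invoke $\mc{A}(U)$-bilinearity of both sides together with the fact that every element of a direct sum is a finite sum of homogeneous components. First I would reduce to the homogeneous case: since $\wedge$ is $\mc{A}(U)$-bilinear by Cor.\ \ref{09260950}, and $\uppsi_{\times}$, $\upphi_{\times}$, $(\uppsi\otimes\upphi)_{\times}$ are $\mc{A}(U)$-linear (Def.\ \ref{08281845int} and Def.\ \ref{08281845diff}), it suffices to check the identity for $\theta\in\mr{Alt}^{l}(U,M;G,\lambda)$ and $\ep\in\mr{Alt}^{k}(U,M;H)$ with $k,l\in\Z_{+}$ fixed; the general case follows by expanding along the direct sum decompositions. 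Here one also needs the elementary remark that $\uppsi\otimes\upphi\in\mc{L}(G\widehat{\otimes}H,G_{1}\widehat{\otimes}H_{1})$ is well-defined — this is the continuous extension to the completion of the $\K$-linear map obtained by linearizing the jointly continuous bilinear map $(u,v)\mapsto\uppsi(u)\otimes\upphi(v)$ via the universal property of the projective tensor product — so that $(\uppsi\otimes\upphi)_{\times}$ makes sense on $\mr{Alt}^{k+l}(U,M;G\widehat{\otimes}H,\lambda)$; note the hypotheses on $G_{1},H_{1}$ guarantee $G_{1}\widehat{\otimes}H_{1}$ is again of the required weak-topology type so the right-hand side wedge is defined.

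Next I would reduce to elementary tensors. By Prp.\ \ref{09111218}, $\theta$ is a finite sum of terms $f\otimes\zeta$ with $f\in\mf{L}_{c}^{1}(U,G,\lambda)$, $\zeta\in\mr{Alt}^{l}(U,M)$, and by Prp.\ \ref{09101648diff}, $\ep$ is a finite sum of terms $g\otimes\omega$ with $g\in\mc{A}(U,H)$, $\omega\in\mr{Alt}^{k}(U,M)$; using bilinearity of $\wedge_{1}$ and linearity of the pushforwards once more, it is enough to treat $\theta=f\otimes\zeta$ and $\ep=g\otimes\omega$. For these the Remark immediately following Def.\ \ref{09260951} gives
\begin{equation*}
(f\otimes\zeta)\wedge_{1}(g\otimes\omega)=\uptau(f\otimes g)\otimes(\zeta\wedge\omega),
\end{equation*}
where $\uptau$ is as in Def.\ \ref{09260817}, i.e. $\uptau(f\otimes g)=(x\mapsto f(x)\otimes g(x))$. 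Applying $(\uppsi\otimes\upphi)_{\times}=(\uppsi\otimes\upphi)_{\ast}\otimes\mr{Id}$ then yields $\bigl((\uppsi\otimes\upphi)\circ\uptau(f\otimes g)\bigr)\otimes(\zeta\wedge\omega)$, which by the very definition of $\uptau$ and of $\uppsi\otimes\upphi$ on elementary tensors equals $\bigl(x\mapsto\uppsi(f(x))\otimes\upphi(g(x))\bigr)\otimes(\zeta\wedge\omega)=\uptau\bigl(\uppsi_{\ast}(f)\otimes\upphi_{\ast}(g)\bigr)\otimes(\zeta\wedge\omega)$. On the other side, $\uppsi_{\times}(f\otimes\zeta)=\uppsi_{\ast}(f)\otimes\zeta$ and $\upphi_{\times}(g\otimes\omega)=\upphi_{\ast}(g)\otimes\omega$, so applying the same Remark gives $\uppsi_{\times}(\theta)\wedge\upphi_{\times}(\ep)=\uptau\bigl(\uppsi_{\ast}(f)\otimes\upphi_{\ast}(g)\bigr)\otimes(\zeta\wedge\omega)$, which matches.

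The one genuinely delicate point — the main obstacle — is the identity
\begin{equation*}
(\uppsi\otimes\upphi)\circ\uptau(f\otimes g)=\uptau\bigl(\uppsi_{\ast}(f)\otimes\upphi_{\ast}(g)\bigr)
\end{equation*}
as maps $U\to G_{1}\widehat{\otimes}H_{1}$, i.e. the compatibility of the pointwise ``external'' product $\uptau$ with the tensor map $\uppsi\otimes\upphi$. This is true pointwise because on a single pair $(u,v)=(f(x),g(x))$ one has $(\uppsi\otimes\upphi)(u\otimes v)=\uppsi(u)\otimes\upphi(v)$ by definition of the morphism $\uppsi\otimes\upphi$ on the algebraic tensor product, and continuity extends this to $G\widehat{\otimes}H$; what requires a word of care is that the resulting map $x\mapsto\uppsi(f(x))\otimes\upphi(g(x))$ indeed lies in $\mf{L}_{c}^{1}(U,G_{1}\widehat{\otimes}H_{1},\lambda)$, which is exactly what Lemma \ref{09251928} (applied to $G_{1},H_{1}$) provides, together with $\uppsi_{\ast}(f)\in\mf{L}_{c}^{1}(U,G_{1},\lambda)$ from Lemma \ref{08281522int} and $\upphi_{\ast}(g)\in\mc{A}(U,H_{1})$ from Lemma \ref{08281522diff}. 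I would also remark, to close the argument cleanly, that since $\wedge=\wedge_{1}$ only the $\wedge_{1}$ case is asserted, so no separate treatment of $\wedge_{2}$ is needed here; and that the whole reduction is legitimate because all the modules in sight are honest tensor products over the commutative ring $\mc{A}(U)$, so ``finite sum of elementary tensors plus bilinearity/linearity'' is a valid proof scheme.
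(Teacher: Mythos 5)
Your proposal is correct and follows essentially the same route as the paper: the paper's proof consists of noting that $\uppsi\otimes\upphi\in\mc{L}(G\widehat{\otimes}H,G_{1}\widehat{\otimes}H_{1})$ (citing \cite[pg.37]{gro}) so that the statement is well-set, and then declaring the identity ``trivially true''; your reduction to elementary tensors $f\otimes\zeta$, $g\otimes\omega$ via $\mc{A}(U)$-bilinearity and the pointwise compatibility $(\uppsi\otimes\upphi)\circ\uptau(f\otimes g)=\uptau\bigl(\uppsi_{\ast}(f)\otimes\upphi_{\ast}(g)\bigr)$ is exactly the verification the paper leaves implicit.
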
  
\begin{proof}
The statement is well-set since
$\uppsi\otimes\upphi\in\mc{L}(G\widehat{\otimes}H,G_{1}\widehat{\otimes}H_{1})$ by \cite[pg.37]{gro}, then
the statement is trivially true.
\end{proof}
\begin{corollary}
\label{09121123int}
Assume $\K=\C$.
Let $N$ be a differential manifold, $W$ be an open set of $N$, $F\in\mc{C}^{\infty}(W,U)$
be a diffeomorphism, $\eta\in\mr{Alt}^{\bullet}(U,M;G,\lambda)$ and $\ep\in\mr{Alt}^{\bullet}(U,M;H)$. 
If
$\{G_{j}\}_{j\in J}$ is a family of \textbf{real} locally convex spaces and $G$ is such that
$G_{\R}=\prod_{j\in J}G_{j}$ provided with the product
topology and if $\{H_{k}\}_{k\in K}$ is a family of real locally convex spaces and $H$ is such that
$H_{\R}=\prod_{k\in K}H_{k}$ provided with the product topology;
then for every $j\in J$ we have that
$(\Pr_{G}^{j})_{\times}\circ\mf{t}_{G}=\mf{t}_{G}\circ(\Pr_{G}^{j})_{\times}$,
$(\Pr_{G}^{j})_{\times}\circ\mf{r}_{G}=\mf{r}_{G}\circ(\Pr_{G}^{j})_{\times}$,
$(\Pr_{G}^{j})_{\times}\circ\overset{\times}{F}=\overset{\times}{F}\circ(\Pr_{G}^{j})_{\times}$,
moreover for every $k\in K$ we have that
\begin{equation*}
\big((\Pr_{G}^{j})_{\times}\otimes(\Pr_{H}^{k})_{\times}\bigr)
(\eta\wedge\ep)=(\Pr_{G}^{j})_{\times}(\eta)\wedge(\Pr_{H}^{k})_{\times}
(\ep).
\end{equation*}
\end{corollary}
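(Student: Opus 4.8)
The plan is to reduce Corollary \ref{09121123int} entirely to previously established results, treating each of the four assertions separately. The overarching observation is that $\Pr_{G}^{j}\in\mc{L}(G_{\R},G_{j})$ and $\Pr_{H}^{k}\in\mc{L}(H_{\R},H_{k})$ for every $j\in J$ and $k\in K$, exactly as noted in the proof of Cor. \ref{09170919} and Cor. \ref{09170919diff}; so everything in sight is well-set once we pass from $\C$ to the underlying real structures, and by Lemma \ref{09121429} together with Rmk. \ref{10041224} the ambient modules are unchanged under this passage, i.e. $\mr{Alt}^{\bullet}(U,M;G,\lambda)=\mr{Alt}^{\bullet}(U,M;G_{\R},\lambda)$ and $\mr{Alt}^{\bullet}(U,M;H)=\mr{Alt}^{\bullet}(U,M;H_{\R})$.

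First I would dispose of the three commutation identities for $\mf{t}_{G}$, $\mf{r}_{G}$ and $\overset{\times}{F}$. These are literally the content of Thm. \ref{08281542int} (the integrable-form version of ``Pushforward Commutes with All the Above Operators'') applied with $\K=\R$, with $G$ replaced by $G_{\R}$, and with $\uppsi$ replaced by the continuous $\R$-linear map $\Pr_{G}^{j}$. No further argument is needed; one simply cites Thm. \ref{08281542int} verbatim, observing that its hypotheses now hold in the real category.

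Next I would handle the wedge identity. Here the key point is that the wedge product $\wedge$ of Def. \ref{09260951} for $\R$-valued spaces requires the topologies on the two factors to be weak topologies $\sigma(G,X)$ and $\sigma(H,Y)$; but a product of real locally convex spaces carries the product topology, which is precisely $\sigma\bigl(\prod_{j}G_{j},X\bigr)$ for $X$ the span of the coordinate functionals $\{\uppsi\circ\Pr_{G}^{j}:j\in J,\ \uppsi\in G_{j}^{\prime}\}$, and likewise for $H_{\R}$. Hence Def. \ref{09260951} applies to $G_{\R}=\prod_{j}G_{j}$ and $H_{\R}=\prod_{k}H_{k}$, so the wedge on the left-hand side is defined. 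Then the desired equality
\begin{equation*}
\bigl((\Pr_{G}^{j})_{\times}\otimes(\Pr_{H}^{k})_{\times}\bigr)(\eta\wedge\ep)=(\Pr_{G}^{j})_{\times}(\eta)\wedge(\Pr_{H}^{k})_{\times}(\ep)
\end{equation*}
is exactly Prp. \ref{09290547} (``Pushforward Commutes with Wedge'') applied with $\K=\R$, with $(G,H)$ replaced by $(G_{\R},H_{\R})$, with $(G_{1},H_{1})$ replaced by $(G_{j},H_{k})$, and with $(\uppsi,\upphi)$ replaced by $(\Pr_{G}^{j},\Pr_{H}^{k})$; one needs only to note that $\Pr_{G}^{j}\otimes\Pr_{H}^{k}\in\mc{L}(G_{\R}\widehat{\otimes}H_{\R},G_{j}\widehat{\otimes}H_{k})$ by \cite[pg.37]{gro}, which is the well-setness clause of Prp. \ref{09290547}.

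The only genuinely delicate point — and the place where a careless reader might stumble — is the verification that the product topology on $\prod_{j}G_{j}$ really is of the form $\sigma(G_{\R},X)$ for a suitable $X\subseteq (G_{\R})^{\ast}$, so that the hypotheses of Def. \ref{09260951} and Prp. \ref{09290547} are met; this is a standard fact about products of locally convex spaces (the product topology is the initial topology for the projections, and on a product of locally convex spaces it coincides with the weak topology induced by all functionals of the form $\uppsi\circ\Pr^{j}$), but it should be stated explicitly. Everything else is a direct invocation of Thm. \ref{08281542int} and Prp. \ref{09290547} in the real case, so the ``proof'' is essentially a two-line citation once this topological identification is recorded.
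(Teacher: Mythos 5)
Your core argument coincides with the paper's own proof: both reduce everything to Thm. \ref{08281542int} and Prp. \ref{09290547} applied over $\K=\R$, with $G$, $H$ replaced by $G_{\R}$, $H_{\R}$, with $(G_{1},H_{1})$ replaced by $(G_{j},H_{k})$, and with $\uppsi$, $\upphi$ replaced by $\Pr_{G}^{j}$, $\Pr_{H}^{k}$, after noting via Lemma \ref{09121429} that $\mr{Alt}^{\bullet}(U,M;G,\lambda)=\mr{Alt}^{\bullet}(U,M;G_{\R},\lambda)$. Up to that point you are doing exactly what the paper does, and the first three commutation identities are fine.

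The ``delicate point'' you add, however, is resolved incorrectly. It is false in general that the product topology on $\prod_{j\in J}G_{j}$ coincides with the weak topology $\sigma\bigl(\prod_{j}G_{j},X\bigr)$ for $X$ the span of the coordinate functionals $\uppsi\circ\Pr_{G}^{j}$: the initial topology for the projections $\Pr_{G}^{j}$ (with each $G_{j}$ carrying its given locally convex topology) is in general strictly finer than the initial topology for the scalar maps $\uppsi\circ\Pr_{G}^{j}$, and the two agree only when every factor $G_{j}$ itself carries its weak topology $\sigma(G_{j},G_{j}^{\prime})$ --- take $J$ a singleton and $G_{1}$ an infinite-dimensional Banach space with its norm topology to see the failure. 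So your verification of the hypothesis of Def. \ref{09260951} and Prp. \ref{09290547} does not go through as written. To be fair, the paper's own proof silently ignores this hypothesis (it records only that the product topology is locally convex); your instinct to address it is sound, but the fix is not the identification you assert. One must instead carry the weak-topology hypotheses on $G$, $H$ (hence on $G_{\R}$, $H_{\R}$ and, by restriction to the factors, on $G_{j}$, $H_{k}$) into the corollary --- they are needed in any case for $\eta\wedge\ep$ to be defined at all by Def. \ref{09260951} --- rather than attempt to derive them for free from the product-topology assumption.
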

\begin{proof}
$\eta\in\mr{Alt}^{\bullet}(U,M;G_{\R},\lambda)$ since Lemma \ref{09121429}, while 
$\Pr_{G}^{j}\in\mc{L}(G_{\R},G_{j})$ and the product topology is locally convex as a particular case of what stated in
\cite[II.5]{EVT}. Thus the statement is well-set and it follows since
Thm. \ref{08281542int} and Prp. \ref{09290547} applied to $\K=\R$,
to $G$ replaced by $G_{\R}$ and to $\uppsi$ replaced by $\Pr_{G}^{j}\in\mc{L}(G_{\R},G_{j})$ and 
to $H$ replaced by $H_{\R}$ and to $\upphi$ replaced by $\Pr_{H}^{k}\in\mc{L}(H_{\R},H_{k})$.
\end{proof}
Next we start to define the wedge product for $G$-valued smooth forms.
\begin{lemma}
\label{09251928diff}
The following
\begin{equation*}
\begin{aligned}
\mc{A}(U,G)\times\mc{A}(U,H)&\to\mc{A}(U,G\widehat{\otimes}H),
\\
(f,g)&\mapsto(x\mapsto f(x)\otimes g(x));
\end{aligned}
\end{equation*}
is a well-defined $\mc{A}(U)$-bilinear map.
\end{lemma}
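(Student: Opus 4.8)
The plan is to treat well-definedness and $\mc{A}(U)$-bilinearity separately, the latter being purely formal. For well-definedness, given $f\in\mc{A}(U,G)$ and $g\in\mc{A}(U,H)$, set $h\colon U\ni x\mapsto f(x)\otimes g(x)\in G\widehat{\otimes}H$ (note $G\widehat{\otimes}H$ is again a Hausdorff locally convex space over $\K$, so $\mc{A}(U,G\widehat{\otimes}H)$ is meaningful). By the standard-argument/gluing-lemma remark recorded in the Notation section it suffices to produce, for each $x\in U$, a chart $(V,\beta)$ of $U$ with $V\ni x$ such that $h\circ\imath_{V}^{U}\circ\beta^{-1}\in\mc{C}^{\infty}(\beta(V),G\widehat{\otimes}H)$. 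Fixing such a chart, the maps $a\coloneqq f\circ\imath_{V}^{U}\circ\beta^{-1}$ and $b\coloneqq g\circ\imath_{V}^{U}\circ\beta^{-1}$ are Bastiani $\mc{C}^{\infty}$ from the open set $\beta(V)\subseteq\R^{N}$ into $G$ and $H$ respectively, and $h\circ\imath_{V}^{U}\circ\beta^{-1}$ is precisely the pointwise tensor product $t\mapsto a(t)\otimes b(t)$.

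So the only real point is the following: if $a\colon A\to G$ and $b\colon A\to H$ are Bastiani $\mc{C}^{\infty}$ on an open set $A\subseteq\R^{N}$, then $t\mapsto a(t)\otimes b(t)$ is Bastiani $\mc{C}^{\infty}$ from $A$ into $G\widehat{\otimes}H$. I would prove this by factoring the map as the composite of $(a,b)\colon A\to G\times H$ with the canonical bilinear map $\mu\colon G\times H\to G\widehat{\otimes}H$, $\mu(u,v)=u\otimes v$. The first factor is $\mc{C}^{\infty}$ because a map into a finite product of locally convex spaces is $\mc{C}^{\infty}$ iff each component is. The map $\mu$ is continuous, since the canonical bilinear map $G\times H\to G\otimes_{\pi}H$ is continuous by the very definition of the projective tensor product topology (see \cite{gro}) and the completion embedding $G\otimes_{\pi}H\hookrightarrow G\widehat{\otimes}H$ is continuous and linear; and a continuous bilinear map between Hausdorff locally convex spaces is Bastiani $\mc{C}^{\infty}$, its directional derivative at $(u,v)$ along $(u',v')$ being $\mu(u',v)+\mu(u,v')$, jointly continuous in the four variables, with all derivatives of order $\geq 3$ vanishing. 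Applying the fact that the composite of Bastiani $\mc{C}^{\infty}$ maps is Bastiani $\mc{C}^{\infty}$ concludes the step.

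I expect this to be the main (and essentially only) obstacle; concretely it amounts to the Leibniz rule $D_{v}(a\otimes b)=(D_{v}a)\otimes b+a\otimes(D_{v}b)$, which could alternatively be obtained by a direct difference-quotient computation exploiting the continuity of $\mu$, followed by an induction on the order of differentiation (the derivatives of $t\mapsto a(t)\otimes b(t)$ being again finite sums of pointwise tensor products of lower-order derivatives of $a$ and $b$). Note that, in contrast with Lemma \ref{09251928}, no hypothesis on the topologies of $G$ and $H$ is needed here, because there is no integrability condition to check against the functionals of $G\widehat{\otimes}H$; smoothness is preserved simply by composition with the continuous bilinear map $\mu$.

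Finally, $\mc{A}(U)$-bilinearity is immediate. For $h_{1},h_{2}\in\mc{A}(U)$, $f_{1},f_{2}\in\mc{A}(U,G)$ and $g\in\mc{A}(U,H)$, using the bilinearity of $\otimes$ and the fact that real scalars factor out of each tensor slot, one has pointwise $\bigl((h_{1}f_{1}+h_{2}f_{2})(x)\bigr)\otimes g(x)=h_{1}(x)\,\bigl(f_{1}(x)\otimes g(x)\bigr)+h_{2}(x)\,\bigl(f_{2}(x)\otimes g(x)\bigr)$, which is $\mc{A}(U)$-linearity in the first argument; linearity in the second argument is the same computation. Hence the map is $\mc{A}(U)$-bilinear, as claimed.
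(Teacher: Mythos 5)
Your proposal is correct and follows essentially the same route as the paper: both arguments reduce everything to the continuity of the canonical bilinear map $G\times H\to G\widehat{\otimes}H$ (via the projective tensor product topology, citing \cite{gro}) together with the fact that a continuous bilinear map is Bastiani $\mc{C}^{\infty}$. You merely spell out the chartwise reduction, the composition argument, and the $\mc{A}(U)$-bilinearity, all of which the paper leaves implicit.
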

\begin{proof}
The bilinear $\otimes:G\times H\to G\widehat{\otimes}H$ is continuous as a result of \cite[Prp.2 pg. 30]{gro},
thus the statement follows since any continuous bilinear map is smooth w.r.t. the Bastiani differential calculus.
\end{proof}
Lemma \ref{09251928diff} permits to give the following 
\begin{definition}
\label{09260817diff}
Define by abuse of language
\begin{equation*}
\uptau\in\mr{Mor}_{\mc{A}(U)\mr{mod}}\left(\mc{A}(U,G)\otimes_{\mc{A}(U)}\mc{A}(U,H),\mc{A}(U,G\widehat{\otimes}H)\right);
\end{equation*}
such that
\begin{equation*}
\uptau(f\otimes g)=(x\mapsto f(x)\otimes g(x)).
\end{equation*}
\end{definition}
\begin{definition}
Let $k,l\in\Z_{+}$, $\omega\in\mr{Alt}^{k}(U,M)$ and $g\in\mc{A}(U,H)$, define
\begin{equation*}
\begin{aligned}
\wedge_{g,\omega,1}^{l}:\mc{A}(U,G)\times\mr{Alt}^{l}(U,M)&\to\mr{Alt}^{k+l}(U,M;G\widehat{\otimes}H),
\\
(f,\zeta)&\mapsto\uptau(f\otimes g)\otimes(\zeta\wedge\omega),
\end{aligned}
\end{equation*}
and
\begin{equation*}
\begin{aligned}  
\wedge_{g,\omega,2}^{l}:\mc{A}(U,G)\times\mr{Alt}^{l}(U,M)&\to\mr{Alt}^{k+l}(U,M;G\widehat{\otimes}H),
\\
(f,\zeta)&\mapsto\uptau(f\otimes g)\otimes(\omega\wedge\zeta).
\end{aligned}
\end{equation*}
\end{definition}
\begin{proposition}
Let $k,l\in\Z_{+}$, $\omega\in\mr{Alt}^{k}(U,M)$ and $g\in\mc{A}(U,H)$.
Thus $\wedge_{g,\omega,2}^{l}=(-1)^{k+l}\wedge_{g,\omega,1}^{l}$ and $\wedge_{g,\omega,i}^{l}$ is $\mc{A}(U)$-bilinear
for every $i\in\{1,2\}$.  
\end{proposition}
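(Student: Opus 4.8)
The plan is to transcribe, essentially verbatim, the argument that settled the corresponding statement for $G$-valued scalarly essentially integrable forms: one only replaces $\mf{L}_{c}^{1}(U,G,\lambda)$ by $\mc{A}(U,G)$, the integrable operator $\uptau$ of Def.~\ref{09260817} by the smooth operator $\uptau$ of Def.~\ref{09260817diff} --- well-defined and $\mc{A}(U)$-linear by Lemma~\ref{09251928diff} --- and $\mr{Alt}^{k+l}(U,M;G\widehat{\otimes}H,\lambda)$ by $\mr{Alt}^{k+l}(U,M;G\widehat{\otimes}H)$. The only two facts I would invoke are: (i) the wedge product $\mr{Alt}^{k}(U,M)\times\mr{Alt}^{l}(U,M)\to\mr{Alt}^{k+l}(U,M)$ is $\mc{A}(U)$-bilinear and graded-commutative, so that $\omega\wedge\zeta=(-1)^{k+l}\,\zeta\wedge\omega$; and (ii) $\mr{Alt}^{k+l}(U,M;G\widehat{\otimes}H)$ is by definition $\mc{A}(U,G\widehat{\otimes}H)\otimes_{\mc{A}(U)}\mr{Alt}^{k+l}(U,M)$, so that $(u,\xi)\mapsto u\otimes\xi$ is additive in each variable and $\mc{A}(U)$-balanced, in particular $h(u\otimes\xi)=(hu)\otimes\xi=u\otimes(h\xi)$ for $h\in\mc{A}(U)$.

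First I would dispose of the sign identity. For fixed $f\in\mc{A}(U,G)$ and $\zeta\in\mr{Alt}^{l}(U,M)$ one computes, using (i) and then the scalar-homogeneity of $\otimes$ in its second argument coming from (ii),
\[
\wedge_{g,\omega,2}^{l}(f,\zeta)=\uptau(f\otimes g)\otimes(\omega\wedge\zeta)=(-1)^{k+l}\,\bigl(\uptau(f\otimes g)\otimes(\zeta\wedge\omega)\bigr)=(-1)^{k+l}\,\wedge_{g,\omega,1}^{l}(f,\zeta);
\]
since $f$ and $\zeta$ are arbitrary this yields $\wedge_{g,\omega,2}^{l}=(-1)^{k+l}\,\wedge_{g,\omega,1}^{l}$, and it then remains only to check that $\wedge_{g,\omega,1}^{l}$ is $\mc{A}(U)$-bilinear.

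For the bilinearity I would argue slot by slot. Additivity in each variable follows at once from additivity of $\uptau$, of $u\mapsto u\otimes(\zeta\wedge\omega)$, and of $\zeta\mapsto\zeta\wedge\omega$. For homogeneity with $h\in\mc{A}(U)$: in the first slot, $\uptau((hf)\otimes g)=\uptau(h(f\otimes g))=h\,\uptau(f\otimes g)$ by $\mc{A}(U)$-linearity of $\uptau$, whence $\wedge_{g,\omega,1}^{l}(hf,\zeta)=h\,\wedge_{g,\omega,1}^{l}(f,\zeta)$ by (ii); in the second slot, $(h\zeta)\wedge\omega=h(\zeta\wedge\omega)$ by $\mc{A}(U)$-bilinearity of the wedge, whence $\wedge_{g,\omega,1}^{l}(f,h\zeta)=h\,\wedge_{g,\omega,1}^{l}(f,\zeta)$ again by (ii). I do not expect any genuine obstacle: the single substantive point --- that $\uptau$ is well-defined and $\mc{A}(U)$-linear, which ultimately rests on the continuity of $\otimes\colon G\times H\to G\widehat{\otimes}H$ --- has already been secured in Lemma~\ref{09251928diff} and Def.~\ref{09260817diff}, and everything else is routine bookkeeping with the module structure of $\otimes_{\mc{A}(U)}$.
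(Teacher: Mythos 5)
Your argument is essentially the paper's own proof written out in full: the paper likewise reduces everything to the $\mc{A}(U)$-bilinearity of the wedge product on $\mr{Alt}^{\bullet}(U,M)$, the map $\uptau$ of Def.~\ref{09260817diff}, and the $\mc{A}(U)$-module structure of $\mr{Alt}^{k+l}(U,M;G\widehat{\otimes}H)$, so your slot-by-slot verification is a faithful (and more explicit) expansion of the one-line proof given there. One caveat: you justify the sign by asserting that graded commutativity yields $\omega\wedge\zeta=(-1)^{k+l}\,\zeta\wedge\omega$, whereas graded commutativity actually yields $(-1)^{kl}$; for $k=l=1$ these disagree. The exponent $k+l$ is inherited from the statement itself (and from its integrable analogue earlier in the paper), so you have not introduced the discrepancy, but your appeal to graded commutativity does not establish the sign as written and the point deserves to be flagged rather than passed off as routine.
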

\begin{proof}
The wedge product in $\mr{Alt}^{\bullet}(U,M)$ is $\mc{A}(U)$-bilinear,
thus the statement follows since Def. \ref{09260817diff}
and the $\mc{A}(U)$-module structure of $\mr{Alt}^{k+l}(U,M;G\widehat{\otimes}H)$. 
\end{proof}
The above result permits the following 
\begin{definition}
Let $k,l\in\Z_{+}$, $\omega\in\mr{Alt}^{k}(U,M)$ and $g\in\mc{A}(U,H)$.
For every $i\in\{1,2\}$ define $\ov{\wedge}_{g,\omega,i}^{l}$ as the unique 
\begin{equation*}
\ov{\wedge}_{g,\omega,i}^{l}\in
\mr{Mor}_{\mc{A}(U)-\mr{mod}}\left(\mr{Alt}^{l}(U,M;G),\mr{Alt}^{k+l}(U,M;G\widehat{\otimes}H)\right),
\end{equation*}
such that 
\begin{equation*}
(\forall f\in\mc{A}(U,G))(\forall\zeta\in\mr{Alt}^{l}(U,M))
(\ov{\wedge}_{g,\omega,i}^{l}(f\otimes\zeta)=\wedge_{g,\omega,i}^{l}(f,\zeta)).
\end{equation*}
\end{definition}
Easily we see that 
\begin{lemma}
\label{09260926diff}
Let $k,l\in\Z_{+}$.
Thus the map $(g,\omega)\mapsto\ov{\wedge}_{g,\omega,i}^{l}$ is $\mc{A}(U)$-bilinear. In particular there exists a unique
\begin{equation*}
\widehat{\wedge}_{i}^{l}
\in\mr{Mor}_{\mc{A}(U)-\mr{mod}}
\left(\mr{Aut}^{k}(U,M;H),
\mr{Mor}_{\mc{A}(U)-\mr{mod}}\left(\mr{Alt}^{l}(U,M;G),\mr{Alt}^{k+l}(U,M;G\widehat{\otimes}H)\right),
\right)
\end{equation*}
such that 
\begin{equation*}
(\forall g\in\mc{A}(U,H))(\forall\omega\in\mr{Aut}^{k}(U,M))
(\widehat{\wedge}_{i}^{l}(g\otimes\omega)=\ov{\wedge}_{g,\omega,i}^{l}).
\end{equation*}
\end{lemma}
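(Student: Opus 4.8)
The plan is to reduce the claim to the now-familiar mechanism of linearizing bilinear maps via the universal property of the tensor product over the commutative ring $\mc{A}(U)$, exactly as in the integrable case Lemma \ref{09260926}. First I would fix $k,l\in\Z_{+}$ and verify that the assignment $(g,\omega)\mapsto\ov{\wedge}_{g,\omega,i}^{l}$ is $\mc{A}(U)$-bilinear as a map from $\mc{A}(U,H)\times\mr{Alt}^{k}(U,M)$ into the $\mc{A}(U)$-module $\mr{Mor}_{\mc{A}(U)-\mr{mod}}(\mr{Alt}^{l}(U,M;G),\mr{Alt}^{k+l}(U,M;G\widehat{\otimes}H))$. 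Additivity in each slot is clear; for the $\mc{A}(U)$-homogeneity one checks it on the generating tensors $f\otimes\zeta$ of $\mr{Alt}^{l}(U,M;G)$, where $\ov{\wedge}_{g,\omega,i}^{l}(f\otimes\zeta)=\uptau(f\otimes g)\otimes(\zeta\wedge\omega)$ (resp.\ $\omega\wedge\zeta$), and uses that $\uptau$ from Def. \ref{09260817diff} is $\mc{A}(U)$-bilinear together with the $\mc{A}(U)$-bilinearity of the ordinary wedge product on $\mr{Alt}^{\bullet}(U,M)$; since the generators span, the identity of $\mc{A}(U)$-linear maps follows.

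Having established this bilinearity, I would invoke the universal property of $\otimes_{\mc{A}(U)}$ to obtain the unique $\mc{A}(U)$-linear map $\widehat{\wedge}_{i}^{l}$ out of $\mc{A}(U,H)\otimes_{\mc{A}(U)}\mr{Alt}^{k}(U,M)$ — which by Prp. \ref{09101648diff} (or rather its $\mr{Alt}$-analogue, the isomorphism identifying $\mr{Alt}^{k}(U,M;H)=\mr{Aut}^{k}(U,M;H)$ with $\mc{A}(U,H)\otimes_{\mc{A}(U)}\mr{Alt}^{k}(U,M)$) is the stated domain — satisfying $\widehat{\wedge}_{i}^{l}(g\otimes\omega)=\ov{\wedge}_{g,\omega,i}^{l}$ on generators. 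Uniqueness is immediate since the generators $g\otimes\omega$ span. This is a verbatim transcription of the proof of Lemma \ref{09260926}, with $\mf{L}_{c}^{1}(U,G,\lambda)$ replaced by $\mc{A}(U,G)$ throughout, so I would write the proof simply as a reference to that earlier argument.

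The only genuine subtlety — the sole place where the smooth case differs from the integrable one — is that the well-definedness of $\uptau$ into $\mc{A}(U,G\widehat{\otimes}H)$ now rests on Lemma \ref{09251928diff}, i.e.\ on the fact that the canonical bilinear map $\otimes\colon G\times H\to G\widehat{\otimes}H$ is continuous (Grothendieck, \cite[Prp.2 pg.30]{gro}) and hence Bastiani-smooth, so that $x\mapsto f(x)\otimes g(x)$ lands in $\mc{A}(U,G\widehat{\otimes}H)$ whenever $f\in\mc{A}(U,G)$ and $g\in\mc{A}(U,H)$; crucially this needs no Mackey-type hypothesis on the topologies of $G$ and $H$, which is why the statement here carries no assumption on subspaces $X\subseteq G^{\ast}$, $Y\subseteq H^{\ast}$. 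I do not anticipate any real obstacle: everything is bookkeeping with the universal property. If pressed to name the most delicate point, it is keeping straight that the target module $\mr{Mor}_{\mc{A}(U)-\mr{mod}}(\mr{Alt}^{l}(U,M;G),\mr{Alt}^{k+l}(U,M;G\widehat{\otimes}H))$ is itself an $\mc{A}(U)$-module (as $\mc{A}(U)$ is commutative, so Hom-modules inherit an $\mc{A}(U)$-structure), which is what licenses speaking of $\mc{A}(U)$-bilinearity of $(g,\omega)\mapsto\ov{\wedge}_{g,\omega,i}^{l}$ in the first place.
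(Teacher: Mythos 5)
Your argument is correct and is precisely the one the paper leaves implicit behind ``Easily we see that'': verify $\mc{A}(U)$-bilinearity of $(g,\omega)\mapsto\ov{\wedge}_{g,\omega,i}^{l}$ on the generators $f\otimes\zeta$ using the $\mc{A}(U)$-bilinearity of $\uptau$ and of the ordinary wedge, then linearize via the universal property of $\otimes_{\mc{A}(U)}$ through the identification $\mr{Alt}^{k}(U,M;H)\cong\mc{A}(U,H)\otimes_{\mc{A}(U)}\mr{Alt}^{k}(U,M)$. Your remark that the smooth case, unlike Lemma \ref{09260926}, needs no weak-topology hypothesis on $G$ and $H$ (only the continuity, hence Bastiani-smoothness, of $\otimes:G\times H\to G\widehat{\otimes}H$ via Lemma \ref{09251928diff}) is exactly the right point of difference.
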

\begin{definition}
[\textbf{The Wedge Products of $G$-Valued Smooth Forms}]
\label{09260951diff}
Let $k,l\in\Z_{+}$, define
\begin{equation*}
\begin{aligned} 
\wedge_{1}^{k,l}:\mr{Alt}^{l}(U,M;G,\lambda)\times\mr{Alt}^{k}(U,M;H)&\to\mr{Alt}^{k+l}(U,M;G\widehat{\otimes}H);
\\
(\theta,\ep)&\mapsto\widehat{\wedge}_{1}^{l}(\ep)(\theta),
\end{aligned}
\end{equation*}
and
\begin{equation*}
\begin{aligned} 
\wedge_{2}^{k,l}:\mr{Alt}^{k}(U,M;H)\times\mr{Alt}^{l}(U,M;G)&\to\mr{Alt}^{k+l}(U,M;G\widehat{\otimes}H),
\\
(\ep,\theta)&\mapsto\widehat{\wedge}_{2}^{l}(\ep)(\theta).
\end{aligned}
\end{equation*}
Next define 
\begin{equation*}
\begin{aligned}
\wedge_{1}:\mr{Alt}^{\bullet}(U,M;G)\times\mr{Alt}^{\bullet}(U,M;H)&\to\mr{Alt}^{\bullet}(U,M;G\widehat{\otimes}H);
\\
(\theta,\ep)&\mapsto\wedge_{1}^{\mr{ord}(\ep),\mr{ord}(\theta)}(\theta,\ep),
\end{aligned}
\end{equation*}
and
\begin{equation*}
\begin{aligned}
\wedge_{2}:\mr{Alt}^{\bullet}(U,M;H)\times\mr{Alt}^{\bullet}(U,M;G)&\to\mr{Alt}^{\bullet}(U,M;G\widehat{\otimes}H),
\\
(\ep,\theta)&\mapsto\wedge_{2}^{\mr{ord}(\ep),\mr{ord}(\theta)}(\ep,\theta).
\end{aligned}
\end{equation*}
$\wedge_{1}$ will be also denoted by $\wedge$.
\end{definition}
\begin{remark}
$(f\otimes\zeta)\wedge_{1}(g\otimes\omega)=\uptau(f\otimes g)\otimes(\zeta\wedge\omega)$
and
$(g\otimes\omega)\wedge_{2}(f\otimes\zeta)=\uptau(f\otimes g)\otimes(\omega\wedge\zeta)$.
\end{remark}
\begin{corollary}
[\textbf{The Wedge Products are $\mc{A}(U)$-Bilinear}]
\label{09260950diff}
$\wedge_{i}$ in Def. \ref{09260951diff} is $\mc{A}(U)$-bilinear for every $i\in\{1,2\}$.
\end{corollary}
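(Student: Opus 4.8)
The plan is to argue in exactly the same way as for the integrable version in Corollary~\ref{09260950}, simply tracing the bilinearity through the chain of definitions that build up $\wedge_i$. First I would fix $i\in\{1,2\}$ and reduce the claim to showing that the graded maps $\wedge_i^{k,l}$ of Def.~\ref{09260951diff} are $\mc{A}(U)$-bilinear for every $k,l\in\Z_{+}$; since $\wedge_i$ is defined on the direct sums $\mr{Alt}^{\bullet}$ by applying the appropriate $\wedge_i^{\mr{ord}(\ep),\mr{ord}(\theta)}$ componentwise, $\mc{A}(U)$-bilinearity of each graded piece together with the universal property of the direct sum in $\mc{A}(U)-\mr{mod}$ yields the $\mc{A}(U)$-bilinearity of the total map.

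Next I would unwind $\wedge_1^{k,l}(\theta,\ep)=\widehat{\wedge}_1^{l}(\ep)(\theta)$ (and symmetrically for $\wedge_2^{k,l}$). Linearity in $\theta$ is immediate because $\widehat{\wedge}_1^{l}(\ep)$ is, by Lemma~\ref{09260926diff}, an element of $\mr{Mor}_{\mc{A}(U)-\mr{mod}}\bigl(\mr{Alt}^{l}(U,M;G),\mr{Alt}^{k+l}(U,M;G\widehat{\otimes}H)\bigr)$, hence $\mc{A}(U)$-linear in its argument. Linearity in $\ep$ follows from the other half of Lemma~\ref{09260926diff}: $\widehat{\wedge}_1^{l}$ itself is an $\mc{A}(U)$-module morphism from $\mr{Alt}^{k}(U,M;H)$ into the module $\mr{Mor}_{\mc{A}(U)-\mr{mod}}(\dots)$, so $\ep\mapsto\widehat{\wedge}_1^{l}(\ep)$ is $\mc{A}(U)$-linear and evaluation at a fixed $\theta$ preserves this. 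Thus $\wedge_1^{k,l}$ is $\mc{A}(U)$-bilinear; the argument for $\wedge_2^{k,l}$ is identical, using $\widehat{\wedge}_2^{l}$ in place of $\widehat{\wedge}_1^{l}$.

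Since all the ingredients — Lemma~\ref{09251928diff}, Def.~\ref{09260817diff}, the preceding proposition giving $\mc{A}(U)$-bilinearity of $\wedge_{g,\omega,i}^{l}$, and Lemma~\ref{09260926diff} — have already been established in the smooth ($\mc{A}(U,G)$) setting exactly paralleling the integrable ($\mf{L}_c^1(U,G,\lambda)$) setting, there is essentially no obstacle here: the proof is a verbatim transcription of the proof of Corollary~\ref{09260950} with $\mf{L}_c^1(U,G,\lambda)$ replaced by $\mc{A}(U,G)$ and Lemma~\ref{09260926} replaced by Lemma~\ref{09260926diff}. The only point that needs the slightest care is confirming that the direct-sum passage from the graded $\wedge_i^{k,l}$ to the total $\wedge_i$ does not disturb bilinearity, which is automatic. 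Accordingly the proof is just:

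\begin{proof}
$\wedge_{i}^{k,l}$ is $\mc{A}(U)$-bilinear for every $k,l\in\Z_{+}$ and $i\in\{1,2\}$ as a consequence of Lemma~\ref{09260926diff}, then the statement follows.
\end{proof}
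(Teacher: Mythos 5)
Your proposal is correct and coincides with the paper's own proof, which likewise reduces the claim to the $\mc{A}(U)$-bilinearity of each graded piece $\wedge_{i}^{k,l}$ and derives that from Lemma \ref{09260926diff}. The expanded justification you give (linearity in $\theta$ from $\widehat{\wedge}_{i}^{l}(\ep)$ being a module morphism, linearity in $\ep$ from $\widehat{\wedge}_{i}^{l}$ itself being one) is exactly the content the paper leaves implicit.
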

\begin{proof}
$\wedge_{i}^{k,l}$ is $\mc{A}(U)$-bilinear for every $k,l\in\Z_{+}$ and $i\in\{1,2\}$ as a consequence of
Lemma \ref{09260926diff}, then the statement follows.
\end{proof}  
Next we shall use Cor. \ref{08262041} to define the differential of $G$-valued differential forms defined on a
chart of $M$.
\begin{definition}
[\textbf{Differential of a $G$-valued differential form on a chart}]
\label{08281540}
Let $(U,\phi)$ be a chart of $M$, define for every $i\in[1,N]\cap\Z$ the following map
\begin{equation*}
\begin{aligned}
\mc{A}(U,G)\times\mr{Alt}^{\bullet}(U,M)&\to\mr{Alt}^{\bullet}(U,M;G)
\\
(f,\zeta)&\mapsto\partial_{i}^{\phi,G}(f)\otimes(dx_{i}^{\phi}\wedge\zeta).
\end{aligned}
\end{equation*}
The above map is \textbf{$\R$-bilinear} since the $\mc{A}(U)$-module structure of $\mr{Alt}^{\bullet}(U,M;G)$, since 
Cor. \ref{09260950diff} and since $\partial_{i}^{\phi,G}$ is $\R$-linear.
\footnote{but not $\mc{A}(U)$-linear so
the linearization of the above bilinear map is w.r.t. $\R$ rather than $\mc{A}(U)$, however this does not affect
the goal for which this map has been introduced, namely to legitimate the definition of $d$ as below.}
Therefore by the universal property of the tensor product 
\begin{equation*}
\exists\,!\mf{d}_{i}\in\mr{Mor}_{\R-\mr{mod}}\left(\mr{Alt}^{\bullet}(U,M;G),\mr{Alt}^{\bullet}(U,M;G)\right),
\end{equation*}
such that 
\begin{equation*}
(\forall f\in\mc{A}(U,G))(\forall\zeta\in\mr{Alt}^{\bullet}(U,M))
(\mf{d}_{i}(f\otimes\zeta)=\partial_{i}^{\phi,G}(f)\otimes(dx_{i}^{\phi}\wedge\zeta)).
\end{equation*}
Therefore we are legitimate to define $d:\mr{Alt}^{\bullet}(U,M;G)\to\mr{Alt}^{\bullet}(U,M;G)$ such that
for every $\theta\in\mr{Alt}^{\bullet}(U,M;G)$
\begin{equation*}
d\theta\coloneqq\sum_{I\in M(\mr{ord}\theta,N,<)}\sum_{j=1}^{N}\mf{d}_{j}(f_{I}\otimes\mc{E}_{dx^{\phi}}(I));
\end{equation*}
where $f:M(\mr{ord}\theta,N,<)\to\mc{A}(U,G)$ is the unique map in the decomposition of $\theta$ established in 
Cor. \ref{08262041}.
\end{definition}  
\begin{theorem}
[\textbf{Differential of a $G$-Valued Smooth Form}]
\label{08281401}
Let $\{U_{\alpha}\}_{\alpha\in D}$ be a collection of domains of charts of $M$ which are subsets of $U$ covering $U$.
Thus there exists a unique $d:\mr{Alt}^{\bullet}(U,M;G)\to\mr{Alt}^{\bullet}(U,M;G)$ 
called the exterior $G$-differentiation such that for all $k\in\Z_{+}$ we have 
$d:\mr{Alt}^{k}(U,M;G)\to\mr{Alt}^{k+1}(U,M;G)$ and 
\begin{equation*}
(\forall\theta\in\mr{Alt}^{\bullet}(U,M;G))
(\forall\alpha\in D)
\bigl(((\imath_{U_{\alpha}}^{U})^{\times}\circ d)\theta
=
(d\circ(\imath_{U_{\alpha}}^{U})^{\times})\theta
\bigr).
\end{equation*}
\end{theorem}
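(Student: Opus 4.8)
The plan is to define $d$ locally using Definition \ref{08281540} on each chart domain, verify that these local definitions agree on overlaps, and then invoke the gluing lemma (Remark \ref{09261218diff}) to obtain a globally defined $\mc{A}(U)$-linear (actually $\R$-linear) map; uniqueness will follow because the stated commutation property forces the value of $d\theta$ on each $U_\alpha$.

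First I would observe that for a fixed chart $(V,\phi)$ with $V\subseteq U$, Definition \ref{08281540} already gives us a well-defined $\R$-linear map $d^{\phi}:\mr{Alt}^{\bullet}(V,M;G)\to\mr{Alt}^{\bullet}(V,M;G)$ raising degree by one. The essential point is \emph{chart-independence}: if $(V,\phi)$ and $(V,\psi)$ are two charts on the same open set, then $d^{\phi}=d^{\psi}$. In the scalar-valued case this is the classical fact that the exterior derivative does not depend on coordinates; here I would reduce to that case by pushing forward along functionals. Concretely, for $\uppsi\in G'$ one checks $\uppsi_{\times}\circ d^{\phi}=d^{\phi}_{\R}\circ\uppsi_{\times}$ where $d^{\phi}_{\R}$ is the ordinary coordinate exterior derivative on $\R$-valued forms (this uses that $\uppsi$ commutes with $\partial_i^{\phi,G}$, i.e. $\uppsi\circ\partial_i^{\phi,G}=\partial_i^{\phi,\R}\circ\uppsi_{\ast}$ since $\uppsi$ is continuous linear, together with Proposition \ref{09170918diff} for the wedge/tensor part). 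Since $d^{\phi}_{\R}=d^{\psi}_{\R}$ classically, we get $\uppsi_{\times}(d^{\phi}\theta)=\uppsi_{\times}(d^{\psi}\theta)$ for all $\uppsi\in G'$, and since $G$ is Hausdorff locally convex its dual separates points, so by the unique decomposition (Corollary \ref{08262041}) applied componentwise — each coefficient in $\mc{A}(U,G)$ is determined by its values, which are separated by $G'$ — we conclude $d^{\phi}\theta=d^{\psi}\theta$.

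Next I would establish \emph{restriction-compatibility}: if $V'\subseteq V$ are both chart domains (using the restricted chart on $V'$), then $(\imath_{V'}^{V})^{\times}\circ d^{\phi}=d^{\phi}\circ(\imath_{V'}^{V})^{\times}$. This is immediate from Definition \ref{08281540} because $\partial_i^{\phi,G}$ is a local operator and pullback along an open inclusion commutes with it and with the wedge product with $dx_i^\phi$; one checks it on elementary tensors $f\otimes\zeta$ and extends by $\R$-linearity. Combining chart-independence with restriction-compatibility, the family $\{d^{\phi_\alpha}\}_{\alpha\in D}$ is compatible on all pairwise intersections $U_\alpha\cap U_\beta$ (cover each intersection by sub-chart-domains and use that both $d^{\phi_\alpha}$ and $d^{\phi_\beta}$ restrict there to the intrinsic coordinate-free operator). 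Then the gluing lemma of Remark \ref{09261218diff} — transported across the isomorphism $\mf{t}_G$ of Proposition \ref{09111859} — produces a unique $d:\mr{Alt}^{\bullet}(U,M;G)\to\mr{Alt}^{\bullet}(U,M;G)$ with $(\imath_{U_\alpha}^{U})^{\times}\circ d=d^{\phi_\alpha}\circ(\imath_{U_\alpha}^{U})^{\times}$, which also shows the stated commutation identity and that $d$ raises degree by one. For uniqueness of $d$: any other $d'$ satisfying the displayed property must have $(\imath_{U_\alpha}^U)^{\times}d'\theta$ determined on each $U_\alpha$, hence $d'=d$ by gluing again; independence from the choice of cover $\{U_\alpha\}$ follows by passing to a common refinement.

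The main obstacle I expect is the chart-independence step, specifically verifying that $\uppsi_{\times}$ intertwines the $G$-valued coordinate operator $\mf{d}_i$ with the scalar one. This requires carefully unwinding Definition \ref{08281540} through the tensor-product structure — that $\uppsi_{\ast}\circ\partial_i^{\phi,G}=\partial_i^{\phi,\R}\circ\uppsi_{\ast}$ on $\mc{A}(U,G)$, which holds because Bastiani differentiation commutes with continuous linear maps — and then confirming that the resulting identity $\uppsi_{\times}d^{\phi}=d^{\phi}_{\R}\uppsi_{\times}$ together with the point-separating property of $G'$ and the uniqueness in Corollary \ref{08262041} really does force $d^{\phi}=d^{\psi}$. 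The subtlety is that unique decomposition gives equality of the $\mc{A}(U,G)$-valued coefficients only after we know the $G$-valued smooth functions agree, which is where the Hausdorff hypothesis on $G$ is used; everything else is routine bookkeeping with the universal property of the tensor product.
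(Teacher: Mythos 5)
Your proposal is correct and follows the same overall strategy as the paper: define $d$ chart-by-chart via Definition \ref{08281540} and glue, with uniqueness forced by the commutation with restrictions. Where you differ is in how the compatibility on overlaps $U_{\alpha}\cap U_{\beta}$ is justified. The paper's proof disposes of this in one clause, attributing it to the uniqueness of the decomposition in Cor. \ref{08262041} together with the functoriality $(\imath_{U_{\alpha}\cap U_{\beta}}^{M})^{\times}=(\imath_{U_{\alpha}\cap U_{\beta}}^{U_{\alpha}})^{\times}\circ(\imath_{U_{\alpha}}^{M})^{\times}$; it never spells out why the two coordinate operators $\mf{d}^{\phi_{\alpha}}$ and $\mf{d}^{\phi_{\beta}}$ agree on the intersection, which is the genuinely nontrivial point. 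You supply that argument explicitly: push forward along $\uppsi\in G^{\prime}$ using \eqref{08281526}, invoke the classical coordinate-independence of the scalar exterior derivative, and conclude by point-separation of $G^{\prime}$ together with the unique decomposition. This is exactly the reduction-to-the-scalar-case mechanism the paper itself deploys later (in Cor. \ref{09211057}) to establish the properties of $d$, so your route is consistent with the paper's methodology while making the key step of this particular proof visible rather than implicit. Two minor points to tidy: when $\K=\C$ the pushforward $\uppsi_{\times}$ lands in $\C$-valued forms, so the scalar coordinate-independence should be applied after composing with $\mf{R}$ and $\mf{I}$ (or one argues directly with $(G_{\R})^{\prime}$ as the paper does); and your final appeal to "a common refinement" for independence of the cover is unnecessary once you know each local operator is the chart-independent one, since any two covers then glue to the same map.
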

\begin{proof}
Since the gluing lemma via charts that is legitimate by Rmk. \ref{09261218diff}
where the compatibility is ensured by the uniqueness of the decomposition established in Cor. \ref{08262041},
and by the fact that
$(\imath_{U_{\alpha}\cap U_{\beta}}^{M})^{\times}=(\imath_{U_{\alpha}\cap U_{\beta}}^{U_{\alpha}})^{\times}\circ
(\imath_{U_{\alpha}}^{M})^{\times}=(\imath_{U_{\alpha}\cap U_{\beta}}^{U_{\beta}})^{\times}\circ(\imath_{U_{\beta}}^{M})^{\times}$.
\end{proof}
\begin{remark}
\label{09111902}
Since Thm. \ref{08281401} and Prp. \ref{09111859} we can define $d$ on $\Omega^{\bullet}(U,M;G)$.
\end{remark}
\begin{definition}
Let $\uppsi\in\mc{L}(G,H)$, define by abuse of language
\begin{equation*}
\uppsi_{\ast}:\mc{A}(U,G)\ni f\mapsto\uppsi\circ f\in\mc{A}(U,H).
\end{equation*}
\end{definition}
Well-set definition since $\uppsi$ is linear and continuous.
Clearly we have 
\begin{lemma}
Let $\uppsi\in\mc{L}(G,H)$, thus 
$\uppsi_{\ast}\in\mr{Mor}_{\mc{A}(U)-\mr{mod}}(\mc{A}(U,G),\mc{A}(U,H))$.
If in addition $(U,\phi)$ is a chart of $M$, then
\begin{equation}
\label{08281526}
\partial_{i}^{\phi,H}\circ\uppsi_{\ast}=\uppsi_{\ast}\circ\partial_{i}^{\phi,G}.
\end{equation}
\end{lemma}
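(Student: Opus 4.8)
The plan is to prove the two assertions separately, both by reduction to known facts. For the first sentence, that $\uppsi_{\ast}\in\mr{Mor}_{\mc{A}(U)-\mr{mod}}(\mc{A}(U,G),\mc{A}(U,H))$, I would simply invoke Lemma \ref{08281522diff}, since the map $\uppsi_{\ast}$ defined immediately above is literally the same map, and that lemma already records that it is a morphism of $\mc{A}(U)$-modules. Indeed $\uppsi$ is $\K$-linear hence $\R$-linear, so $\uppsi_{\ast}(f+g)=\uppsi_{\ast}(f)+\uppsi_{\ast}(g)$ and, for $h\in\mc{A}(U)$ (real-valued), $\uppsi_{\ast}(hf)=\uppsi\circ(hf)=h\cdot(\uppsi\circ f)=h\,\uppsi_{\ast}(f)$ pointwise; this is the content of Lemma \ref{08281522diff}.

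For the identity \eqref{08281526}, the key point is that $\partial_{i}^{\phi,G}$ is defined, according to the Notation section, by transporting via the chart and applying the Bastiani directional derivative operator $D_{e_{i}}^{\phi(U),G}$. So it suffices to check the corresponding identity at the level of maps on the open set $\phi(U)\subseteq\R^{N}$, namely that for $F\in\mc{C}^{\infty}(\phi(U),G)$ one has $D_{e_{i}}^{\phi(U),H}(\uppsi\circ F)=\uppsi\circ D_{e_{i}}^{\phi(U),G}F$. This is the standard chain rule for the Bastiani differential composed with a continuous \emph{linear} map: since $\uppsi$ is continuous and linear, its Bastiani derivative at any point is $\uppsi$ itself, and $D_{v}(\uppsi\circ F)\vert_{a}=\uppsi(D_{v}F\vert_{a})$ follows directly from the definition of the directional derivative as a limit, $\lim_{t\to0}t^{-1}(\uppsi F(a+tv)-\uppsi F(a))=\uppsi\bigl(\lim_{t\to0}t^{-1}(F(a+tv)-F(a))\bigr)$, the interchange being justified by continuity and linearity of $\uppsi$. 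In particular $\uppsi\circ F\in\mc{C}^{\infty}(\phi(U),H)$ whenever $F\in\mc{C}^{\infty}(\phi(U),G)$, which also re-confirms that $\uppsi_{\ast}$ lands in $\mc{A}(U,H)$.

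I would then assemble these two ingredients: unwinding the definitions of $\partial_{i}^{\phi,H}\circ\uppsi_{\ast}$ and $\uppsi_{\ast}\circ\partial_{i}^{\phi,G}$ on a map $f\in\mc{A}(U,G)$, both sides equal the map on $U$ whose value, after precomposition with $\phi^{-1}$, is $D_{e_{i}}^{\phi(U),H}(\uppsi\circ f\circ\phi^{-1})=\uppsi\circ D_{e_{i}}^{\phi(U),G}(f\circ\phi^{-1})$, using the pointwise chain rule above; postcomposing with $\phi$ gives the claimed equality of the two operators on $\mc{A}(U,G)$.

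The only genuinely substantive step is the Bastiani chain rule for postcomposition with a continuous linear map, and even that is essentially immediate from the definition of $D_{v}$ as a limit together with the continuity and linearity of $\uppsi$; everything else is bookkeeping and a citation of Lemma \ref{08281522diff}. I do not anticipate a real obstacle here; the proof is short and the paper's own phrasing (``Clearly we have'') signals that this is meant to be routine.
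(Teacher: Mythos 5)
Your proof is correct and is exactly the routine verification the paper intends: the paper itself offers no proof (it prefaces the lemma with ``Clearly we have''), and your two ingredients --- the already-recorded Lemma \ref{08281522diff} for $\mc{A}(U)$-linearity, and the commutation of a continuous linear map with the Bastiani directional derivative $D_{e_{i}}^{\phi(U),G}$ transported through the chart --- are precisely what makes \eqref{08281526} immediate.
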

The above result permits to give the following
\begin{definition}
\label{08281845}
Let $\uppsi\in\mc{L}(G,H)$, define by abuse of language 
\begin{equation*}
\begin{aligned}
\uppsi_{\times}&\in\mr{Mor}_{\mc{A}(U)-\mr{mod}}
(\mr{Alt}^{\bullet}(U,M;G),\mr{Alt}^{\bullet}(U,M;H));
\\
\uppsi_{\times}&\coloneqq\uppsi_{\ast}\otimes\mr{Id}_{\mr{Alt}^{\bullet}(U,M)},
\end{aligned}
\end{equation*}
and the same symbol denotes also
\begin{equation*}
\begin{aligned}
\uppsi_{\times}&\in\mr{Mor}_{\mc{A}(U)-\mr{mod}}
(\Omega^{\bullet}(U,M;G),\Omega^{\bullet}(U,M;H));
\\
\uppsi_{\times}&\coloneqq\uppsi_{\ast}\otimes\mr{Id}_{\Omega^{\bullet}(U,M)}.
\end{aligned}
\end{equation*}
\end{definition}
\begin{theorem}
[\textbf{Pushforward Commutes with All the Above Operators}]  
\label{08281542}
Let $N$ be a differential manifold, $W$ be an open set of $N$, $F\in\mc{C}^{\infty}(W,U)$,
$\eta\in\mr{Alt}^{\bullet}(U,M;G)$ and $\ep\in\mr{Alt}^{\bullet}(U,M;H)$.
If $\uppsi\in\mc{L}(G,G_{1})$, and $\upphi\in\mc{L}(H,H_{1})$,
then $\uppsi_{\times}\circ\mf{t}=\mf{t}\circ\uppsi_{\times}$,
$\uppsi_{\times}\circ\mf{r}=\mf{r}\circ\uppsi_{\times}$,
$\uppsi_{\times}\circ\overset{\times}{F}=\overset{\times}{F}\circ\uppsi_{\times}$,
$\uppsi_{\times}\circ d=d\circ\uppsi_{\times}$
and $(\uppsi\otimes\upphi)_{\times}(\eta\wedge\ep)=\uppsi_{\times}(\eta)\wedge\upphi_{\times}(\ep)$.
\end{theorem}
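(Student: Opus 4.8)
The strategy is to verify each of the five commutation identities separately, in each case reducing to a statement about simple tensors $f\otimes\zeta$ (with $f\in\mc{A}(U,G)$ or $f\in\mc{A}(U,H)$ and $\zeta$ a basis form) and then invoking the uniqueness of the tensor-product decomposition to extend by $\mc{A}(U)$-linearity (or $\R$-linearity, in the case of $d$). Since $\uppsi_{\times}=\uppsi_{\ast}\otimes\mr{Id}$ on each of the relevant modules, all five identities are ultimately consequences of the functoriality of the tensor product $\mr{Id}_{?}\otimes(-)$ together with the compatibility of $\uppsi_{\ast}$ with the scalar operations $\partial_{i}^{\phi,G}$, the pullback $F^{\ast}$, and the bilinear map $\otimes:G\times H\to G\widehat{\otimes}H$.

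\textbf{The routine identities.} First, $\uppsi_{\times}\circ\mf{t}=\mf{t}\circ\uppsi_{\times}$ and $\uppsi_{\times}\circ\mf{r}=\mf{r}\circ\uppsi_{\times}$: since $\mf{t}_{G}=\mr{Id}_{\mc{A}(U,G)}\otimes\mf{t}_{\R}$ (Def. \ref{09180944}, restricted as in Def. \ref{09111121}) and $\uppsi_{\times}=\uppsi_{\ast}\otimes\mr{Id}$, both composites equal $\uppsi_{\ast}\otimes\mf{t}_{\R}$ on simple tensors, and uniqueness of decomposition forces equality everywhere; this is exactly the smooth-form analogue of Prp. \ref{09170918} (whose proof is quoted as "easily") and Thm. \ref{08281542int}. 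Second, $\uppsi_{\times}\circ\overset{\times}{F}=\overset{\times}{F}\circ\uppsi_{\times}$: with $\overset{\times}{F}=F^{\ast}\otimes F^{\ast}$ one checks on $f\otimes\zeta$ that both sides give $(\uppsi\circ f\circ F)\otimes(F^{\ast}\zeta)$, using that $\uppsi_{\ast}\circ F^{\ast}=F^{\ast}\circ\uppsi_{\ast}$ on $\mc{A}(U,G)$ since both send $f$ to $\uppsi\circ f\circ F$. Third, $(\uppsi\otimes\upphi)_{\times}(\eta\wedge\ep)=\uppsi_{\times}(\eta)\wedge\upphi_{\times}(\ep)$: by Cor. \ref{09260950diff} both sides are $\mc{A}(U)$-bilinear in $(\eta,\ep)$, so it suffices to test on $\eta=f\otimes\zeta$ and $\ep=g\otimes\omega$; by the Remark following Def. \ref{09260951diff} the left side is $(\uppsi\otimes\upphi)_{\ast}(\uptau(f\otimes g))\otimes(\zeta\wedge\omega)$ while the right side is $\uptau((\uppsi\circ f)\otimes(\upphi\circ g))\otimes(\zeta\wedge\omega)$, and these agree because $(\uppsi\otimes\upphi)\circ\otimes_{G,H}=\otimes_{G_{1},H_{1}}\circ(\uppsi\times\upphi)$ pointwise — the well-definedness $\uppsi\otimes\upphi\in\mc{L}(G\widehat{\otimes}H,G_{1}\widehat{\otimes}H_{1})$ being exactly what Prp. \ref{09290547} records via \cite[pg.37]{gro}. (This is the form-level version of Prp. \ref{09290547} again, so one may simply cite it.)

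\textbf{The main obstacle: $\uppsi_{\times}\circ d=d\circ\uppsi_{\times}$.} This is the only identity involving the exterior differential, and it is the one requiring care, because $d$ was built on a chart out of the $\R$-bilinear (not $\mc{A}(U)$-bilinear) maps $(f,\zeta)\mapsto\partial_{i}^{\phi,G}(f)\otimes(dx_{i}^{\phi}\wedge\zeta)$ of Def. \ref{08281540}, and then globalized by the gluing lemma of Thm. \ref{08281401}. The plan is: fix a chart $(U,\phi)$ and use the unique decomposition $\theta=\sum_{I}f_{I}\otimes\mc{E}_{dx^{\phi}}(I)$ from Cor. \ref{08262041}; since $\uppsi_{\ast}$ is $\R$-linear, $\uppsi_{\times}\theta=\sum_{I}(\uppsi\circ f_{I})\otimes\mc{E}_{dx^{\phi}}(I)$ is the corresponding decomposition of $\uppsi_{\times}\theta$, so it is enough to show $\uppsi_{\times}\circ\mf{d}_{i}=\mf{d}_{i}\circ\uppsi_{\times}$ for each $i$, and since both are $\R$-linear this reduces to checking on $f\otimes\zeta$: there $\mf{d}_{i}(f\otimes\zeta)=\partial_{i}^{\phi,G}(f)\otimes(dx_{i}^{\phi}\wedge\zeta)$, so $\uppsi_{\times}\mf{d}_{i}(f\otimes\zeta)=(\uppsi\circ\partial_{i}^{\phi,G}f)\otimes(dx_{i}^{\phi}\wedge\zeta)$, while $\mf{d}_{i}\uppsi_{\times}(f\otimes\zeta)=\partial_{i}^{\phi,H}(\uppsi\circ f)\otimes(dx_{i}^{\phi}\wedge\zeta)$, and the two coincide by the identity \eqref{08281526}, i.e. $\partial_{i}^{\phi,H}\circ\uppsi_{\ast}=\uppsi_{\ast}\circ\partial_{i}^{\phi,G}$, which in turn follows because $\uppsi$ is continuous linear and hence commutes with Bastiani directional derivatives. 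Finally, to pass from the chart-local statement to the stated global $d$ on $\mr{Alt}^{\bullet}(U,M;G)$, I note that the global $d$ of Thm. \ref{08281401} is characterized by $(\imath_{U_{\alpha}}^{U})^{\times}\circ d=d\circ(\imath_{U_{\alpha}}^{U})^{\times}$, that $(\imath_{U_{\alpha}}^{U})^{\times}$ is a special case of $\overset{\times}{F}$ and so already commutes with $\uppsi_{\times}$ by the third identity above, and that restriction to a covering by chart domains is faithful (by the gluing lemma, Rmk. \ref{09261218diff}); hence $\uppsi_{\times}\circ d$ and $d\circ\uppsi_{\times}$ agree after every such restriction and are therefore equal. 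The extension to $\Omega^{\bullet}(U,M;G)$ is immediate via Prp. \ref{09111859} and Rmk. \ref{09111902}.
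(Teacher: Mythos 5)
Your proposal is correct and follows essentially the same route as the paper's (much terser) proof: the identities for $\mf{t}$, $\mf{r}$, $\overset{\times}{F}$ and $\wedge$ are dispatched on simple tensors (with well-posedness of $\uppsi\otimes\upphi$ from \cite[pg.37]{gro}), and the identity for $d$ is obtained chart-locally from Def. \ref{08281540} together with \eqref{08281526} and then globalized via the characterization in Thm. \ref{08281401}. Your write-up merely makes explicit the reduction steps (unique decomposition, commutation with $\mf{d}_{i}$, faithfulness of restriction to a chart covering) that the paper leaves implicit.
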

\begin{proof}
The proof for the operators $\mf{t}$, $\mf{r}$, $\overset{\times}{F}$ and $\wedge$ is trivial, 
where the statement concerning $\wedge$ is well-set since 
$\uppsi\otimes\upphi\in\mc{L}(G\widehat{\otimes}H,G_{1}\widehat{\otimes}H_{1})$ by \cite[pg.37]{gro}.
The proof for the operator $d$ follows by Def. \ref{08281540}, \eqref{08281526}, by what right now said and
by Thm. \ref{08281401}.
\end{proof}
\begin{corollary}
\label{09121123}
Assume $\K=\C$.
Let $N$ be a differential manifold, $W$ be an open set of $N$, $F\in\mc{C}^{\infty}(W,U)$,
$\eta\in\mr{Alt}^{\bullet}(U,M;G)$ and $\ep\in\mr{Alt}^{\bullet}(U,M;H)$. 
If $\{G_{j}\}_{j\in J}$ is a family of real locally convex spaces and $G$ is such that
$G_{\R}=\prod_{j\in J}G_{j}$ provided with the product topology,
and if $\{H_{k}\}_{k\in K}$ is a family of real locally convex spaces and $H$ is such that
$H_{\R}=\prod_{k\in K}H_{k}$ provided with the product topology;
then for every $j\in J$ we have that
$(\Pr_{G}^{j})_{\times}\circ\mf{t}=\mf{t}\circ(\Pr_{G}^{j})_{\times}$,
$(\Pr_{G}^{j})_{\times}\circ\mf{r}=\mf{r}\circ(\Pr_{G}^{j})_{\times}$,
$(\Pr_{G}^{j})_{\times}\circ\overset{\times}{F}=\overset{\times}{F}\circ(\Pr_{G}^{j})_{\times}$,
moreover for every $k\in K$ we have that
\begin{equation*}
\big((\Pr_{G}^{j})_{\times}\otimes(\Pr_{H}^{k})_{\times}\bigr)
(\eta\wedge\ep)=(\Pr_{G}^{j})_{\times}(\eta)\wedge(\Pr_{H}^{k})_{\times}
(\ep).
\end{equation*}
\end{corollary}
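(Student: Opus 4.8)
The plan is to deduce the statement from Thm.~\ref{08281542} specialised to base field $\R$, mimicking the proof of Cor.~\ref{09121123int}. First I would check that everything is well-posed. Since each $G_j$ is a real locally convex space and the product topology is locally convex, being a particular case of what is stated in \cite[II.5]{EVT}, the space $G_\R=\prod_{j\in J}G_j$ is a Hausdorff real locally convex space and each projection $\Pr_G^j$ lies in $\mc{L}(G_\R,G_j)$; the same applies to $H_\R=\prod_{k\in K}H_k$ and to $\Pr_H^k\in\mc{L}(H_\R,H_k)$. Hence $(\Pr_G^j)_\times$, $(\Pr_H^k)_\times$ and the pushforward of $(\Pr_G^j)\otimes(\Pr_H^k)$ are all defined.

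Next I would record the identification that makes the reduction possible. Because $\K=\C$ we have $\mc{A}(U,G)=\mc{A}(U,G_\R)$, whence by Rmk.~\ref{10041224} and its evident analogue for alternating forms $\mr{Alt}^\bullet(U,M;G)=\mr{Alt}^\bullet(U,M;G_\R)$ and $\mr{Alt}^\bullet(U,M;H)=\mr{Alt}^\bullet(U,M;H_\R)$; moreover the operators $\mf{t}$, $\mf{r}$, $\overset{\times}{F}$ and $\wedge$ attached to $G,H$ over $\C$ coincide with those attached to $G_\R,H_\R$ over $\R$, since each is assembled solely from the $\mc{A}(U)$-module structures, the pullback $F^\ast$ and the continuous bilinear map $\otimes\colon G\times H\to G\widehat{\otimes}H$, none of which involves the complex scalars. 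In particular $\eta$ and $\ep$ may be viewed as elements of $\mr{Alt}^\bullet(U,M;G_\R)$ and $\mr{Alt}^\bullet(U,M;H_\R)$.

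With these identifications, the conclusion is immediate: apply Thm.~\ref{08281542} with $\K=\R$, with $G,G_1,\uppsi$ replaced by $G_\R,G_j,\Pr_G^j$ and with $H,H_1,\upphi$ replaced by $H_\R,H_k,\Pr_H^k$. The three relations $\uppsi_\times\circ\mf{t}=\mf{t}\circ\uppsi_\times$, $\uppsi_\times\circ\mf{r}=\mf{r}\circ\uppsi_\times$ and $\uppsi_\times\circ\overset{\times}{F}=\overset{\times}{F}\circ\uppsi_\times$ yield the first three asserted identities, and $(\uppsi\otimes\upphi)_\times(\eta\wedge\ep)=\uppsi_\times(\eta)\wedge\upphi_\times(\ep)$ yields the last. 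The only point requiring a modicum of care is precisely the compatibility of the four operators with the passage $G\rightsquigarrow G_\R$ noted above; once that is in hand the argument is purely formal, and since the $d$-statement is not part of the claim, the gluing subtleties behind Thm.~\ref{08281401} do not enter.
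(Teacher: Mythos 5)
Your proposal is correct and follows the same route as the paper: the paper's proof is simply an appeal to Thm.~\ref{08281542}, applied (as in the proofs of Cor.~\ref{09170919diff} and Cor.~\ref{09121123int}) with $\K=\R$, $G$, $H$ replaced by $G_{\R}$, $H_{\R}$ and $\uppsi$, $\upphi$ replaced by $\Pr_{G}^{j}$, $\Pr_{H}^{k}$. Your additional remarks on well-posedness and on the identification $\mr{Alt}^{\bullet}(U,M;G)=\mr{Alt}^{\bullet}(U,M;G_{\R})$ via Rmk.~\ref{10041224} merely make explicit what the paper leaves implicit.
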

\begin{proof}
Since Thm. \ref{08281542}.
\end{proof}
\begin{corollary}
[\textbf{Properties of the $G$-differential}]
\label{09211057}
Let $d$ the operator uniquely determined in Thm. \ref{08281401}. Thus
\begin{enumerate}
\item
$d$ is $\R-$linear;
\label{09211057st1}
\item
For all $\omega\in\mr{Alt}^{\bullet}(U,M)$ and $\eta\in\mr{Alt}^{\bullet}(U,M;G)$  
\begin{equation*}
d(\omega\wedge\eta)
=
d\omega\wedge\eta+(-1)^{\mr{ord}(\omega)}\omega\wedge d\eta;
\end{equation*}
\label{09211057st2}
\item
$d\circ d=\ze$;
\label{09211057st3}
\item
for all $\uppsi\in G^{\prime}$, $f\in\mc{A}(U,G)$ and $X\in\Gamma(U,M)$ we have   
\begin{equation*}
\left((\mf{R}\circ\imath_{\K}^{\K_{\R}}\circ\uppsi\circ\imath_{G_{\R}}^{G})_{\times}\circ d\right)(\imath_{G}^{G_{\R}}\circ f)(X)
=X\left((\mf{R}\circ\imath_{\K}^{\K_{\R}}\circ\uppsi\circ f)
\right),
\end{equation*}
where in case $\K=\R$ in the above equality $\mf{R}$ has to be understood $\mr{Id}_{\R}$.
\label{09211057st4}
\end{enumerate}
Moreover let $N$ be a manifold, $U^{\prime}$ be an open set of $N$ and $F\in\mc{C}^{\infty}(U,U^{\prime})$.
Thus the following equality of operators defined on $\mr{Alt}^{\bullet}(U^{\prime},N;G)$ holds true
\begin{equation*}
d\circ\overset{\times}{F}=\overset{\times}{F}\circ d.
\end{equation*}
\end{corollary}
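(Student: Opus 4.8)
The plan is to deduce every assertion from the corresponding classical fact about the real-valued exterior differential $d_{\R}$ on $\mr{Alt}^{\bullet}(U,M)$, transported through the unique-decomposition machinery of Cor. \ref{08262041} and the pushforward-commutation Thm. \ref{08281542}. The unifying idea is that $d$ is locally, on the domain of a chart $(U,\phi)$, given by $\mf{d}=\sum_{i,j}\mf{d}_{j}$ applied to the unique decomposition $\theta=\sum_{I}f_{I}\otimes\mc{E}_{dx^{\phi}}(I)$, and that $\mf{d}_{j}(f\otimes\zeta)=\partial_{j}^{\phi,G}(f)\otimes(dx_{j}^{\phi}\wedge\zeta)$. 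Since all four claimed identities are compatible with the gluing lemma (Rmk. \ref{09261218diff}, with compatibility furnished by uniqueness of the decomposition, exactly as in the proof of Thm. \ref{08281401}), it suffices to verify each on a single chart domain, where everything reduces to a finite sum and to the $\R$-linearity of $\partial_{i}^{\phi,G}$ together with the $\mc{A}(U)$-bilinearity of the wedge (Cor. \ref{09260950diff}).

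First I would prove \eqref{09211057st1}: $d$ is $\R$-linear because it is built by composing $\R$-linear maps — the decomposition $\theta\mapsto(f_{I})_{I}$ is $\R$-linear (indeed $\mc{A}(U)$-linear), each $\mf{d}_{j}$ is $\R$-linear by Def. \ref{08281540}, and finite sums of $\R$-linear maps are $\R$-linear; the gluing is then automatic. For \eqref{09211057st2}, the Leibniz rule, I would first reduce to $\omega=h\otimes\mc{E}_{dx^{\phi}}(J)$ and $\eta=f\otimes\mc{E}_{dx^{\phi}}(I)$ by bilinearity of both sides (using Cor. \ref{09260950diff} and \eqref{09211057st1}), then compute both sides in the chart using $\mf{d}_{j}$ and the classical Leibniz identity $\partial_{j}^{\phi,G}(hf)=(\partial_{j}^{\phi}h)f+h\,\partial_{j}^{\phi,G}f$ for $h\in\mc{A}(U)$, $f\in\mc{A}(U,G)$ (the product rule for the Bastiani derivative of a scalar times a vector-valued map), together with the sign bookkeeping $dx_{j}^{\phi}\wedge(\mc{E}_{dx^{\phi}}(J)\wedge\mc{E}_{dx^{\phi}}(I))=(-1)^{|J|}\mc{E}_{dx^{\phi}}(J)\wedge(dx_{j}^{\phi}\wedge\mc{E}_{dx^{\phi}}(I))$ when $dx_{j}^{\phi}$ is moved past $\mc{E}_{dx^{\phi}}(J)$. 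For \eqref{09211057st3}, $d\circ d=\ze$, again reduce to a single $f\otimes\mc{E}_{dx^{\phi}}(I)$; then $dd(f\otimes\mc{E}_{dx^{\phi}}(I))=\sum_{i,j}\partial_{i}^{\phi,G}\partial_{j}^{\phi,G}(f)\otimes(dx_{i}^{\phi}\wedge dx_{j}^{\phi}\wedge\mc{E}_{dx^{\phi}}(I))$, which vanishes because $\partial_{i}^{\phi,G}\partial_{j}^{\phi,G}(f)$ is symmetric in $(i,j)$ — Schwarz's theorem for Bastiani $\mc{C}^{\infty}$-maps — while $dx_{i}^{\phi}\wedge dx_{j}^{\phi}$ is antisymmetric.

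For \eqref{09211057st4} I would unwind the definitions: writing $\upchi\coloneqq\mf{R}\circ\imath_{\K}^{\K_{\R}}\circ\uppsi\circ\imath_{G_{\R}}^{G}\in\mc{L}(G_{\R},\R)$, Thm. \ref{08281542} gives $\upchi_{\times}\circ d=d\circ\upchi_{\times}$, and $\upchi_{\times}(\imath_{G}^{G_{\R}}\circ f)=\upchi\circ f$, a scalar $\mc{C}^{\infty}$-map, so the left side equals $(d(\upchi\circ f))(X)$; evaluating the classical $0$-form differential $d(\upchi\circ f)(X)=X(\upchi\circ f)$ finishes it (noting $\upchi\circ f=\mf{R}\circ\imath_{\K}^{\K_{\R}}\circ\uppsi\circ f$). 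Finally, the naturality under pullback $d\circ\overset{\times}{F}=\overset{\times}{F}\circ d$ for $F\in\mc{C}^{\infty}(U,U^{\prime})$ is the only part requiring genuine work: I would reduce via the chart decomposition of $\theta\in\mr{Alt}^{\bullet}(U^{\prime},N;G)$ and the fact that $\overset{\times}{F}=F^{\ast}\otimes F^{\ast}$ with $F^{\ast}$ the precomposition map, then invoke the classical chain rule for Bastiani derivatives $\partial_{i}^{\phi,G}(f\circ F)=\sum_{k}(\partial_{k}^{\psi}F_{k})\cdot((\partial_{k}^{\psi,G}f)\circ F)$ together with the classical identity $F^{\ast}(dx_{k}^{\psi})=\sum_{i}(\partial_{i}^{\phi}(x_{k}^{\psi}\circ F))\,dx_{i}^{\phi}$ and the multiplicativity $F^{\ast}(\alpha\wedge\beta)=F^{\ast}\alpha\wedge F^{\ast}\beta$; these combine exactly as in the scalar case, and the $G$-valued version differs only by carrying $f$ along passively. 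The main obstacle is bookkeeping the chain rule together with the wedge-sign conventions in the last identity cleanly; everything else is a direct reduction to the scalar theory already encoded in the preceding results.
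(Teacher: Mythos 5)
Your proposal is correct in outline, but it takes a genuinely different route from the paper. The paper's entire proof is a duality reduction: since $(G_{\R})^{\prime}$ separates the points of $G_{\R}$, and since by Rmk. \ref{10041224} one may replace $G$ by $G_{\R}$, one applies Thm. \ref{08281542} (pushforward commutes with $\mf{t}$, $\mf{r}$, $\overset{\times}{F}$, $d$ and $\wedge$) with $\K=\R$ and $H=\R$ to every $\uppsi\in(G_{\R})^{\prime}$; each identity in the corollary then becomes the corresponding classical identity for real-valued smooth forms, and equality is pulled back because two elements of $\mr{Alt}^{\bullet}(U,M;G)$ with the same images under all $\uppsi_{\times}$ coincide (via the unique chart decomposition of Cor. \ref{08262041} and separation of points). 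You instead verify each identity by direct computation in a chart, using the local formula $\mf{d}_{j}(f\otimes\zeta)=\partial_{j}^{\phi,G}(f)\otimes(dx_{j}^{\phi}\wedge\zeta)$, the Leibniz rule for $\partial_{j}^{\phi,G}$, Schwarz's symmetry theorem and the chain rule for Bastiani $\mc{C}^{\infty}$-maps. Both arguments work; yours is self-contained at the level of local formulas and does not invoke the separation property, but it must import (or re-prove) the vector-valued calculus facts just listed, whereas the paper's reduction gets all five assertions simultaneously from the scalar theory with essentially no computation. Your treatment of item \eqref{09211057st4} is in fact the one place where you and the paper coincide, since there you also route through Thm. \ref{08281542}. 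One small slip to fix if you write out the naturality computation: the chain rule coefficient should read $\partial_{i}^{\phi}(x_{k}^{\psi}\circ F)$ rather than $\partial_{k}^{\psi}F_{k}$, consistent with the pullback formula for $dx_{k}^{\psi}$ you state immediately afterwards.
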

\begin{proof}
$(G_{\R})^{\prime}$ separates the points of $G_{\R}$, thus the statement follows by  Rmk. \ref{10041224},
by Thm. \ref{08281542} applied for $\K=\R$, $G$ replaced by $G_{\R}$ and for $H=\R$, and by the fact that the statement
is true for the special case of real valued smooth forms.
\end{proof} 
Now the unique decomposition established in Cor. \ref{08262041int} permits to define the integral of a
maximal $\R$-valued essentially integrable form defined on an open set of $\R^{N}$ as in the standard case
\begin{definition}
\label{09140959a}
Let $V$ be an open set of $\R^{N}$,
and for every $\omega\in\mr{Alt}^{N}(V,\R^{N};\R,\lambda)$ let $f_{\omega}$ be
the unique map in $\mf{L}_{c}^{1}(V,\R,\lambda)$ such that
$\omega=f_{\omega}\otimes\bigwedge_{i=1}^{N}(\imath_{V}^{\R^{N}})^{\ast}(dx_{i})$
via the decomposition established in Cor. \ref{08262041int}. Define the map
\begin{equation*}
\mr{Alt}^{n}(V,\R^{N};\R,\lambda)\ni\omega\mapsto\int f_{\omega}d\lambda_{V}\in\R.
\end{equation*}
\end{definition}
\begin{definition}
Let $M$ be oriented and $(U,\phi)$ be an oriented chart of $M$.
Define $\gamma_{\phi}\in\{1,-1\}$ such that $\gamma_{\phi}=1$ if $(U,\phi)$ is positively oriented, otherwise
$\gamma_{\phi}=-1$. 
\end{definition}
Def. \ref{09140959a} and the concept of support as introduced in Def. \ref{09161456} permit to give the following
definition as in the standard case
\begin{definition}
\label{09140959b}
Let $M$ be oriented, $\omega\in\mr{Alt}^{N}(M;\R,\lambda)$,
$\{(U_{\alpha},\phi_{\alpha})\}_{\alpha\in D}$
be a \emph{finite} family of oriented charts of $M$ such that $\{U_{\alpha}\}_{\alpha\in D}$ is a covering of
$\mr{supp}(\omega)$ moreover by setting $D^{\dagger}=D\cup\{\dagger\}$ and $U_{\dagger}=\complement_{M}\mr{supp}(\omega)$,
let $\{\psi_{\alpha}\}_{\alpha\in D^{\dagger}}$ be a smooth partition of unity subordinate to $\{U_{\alpha}\}_{\alpha\in D^{\dagger}}$.
Define
\begin{equation*}
\int\omega\coloneqq
\sum_{\alpha\in D}\gamma_{\phi_{\alpha}}\int(\imath_{U_{\alpha}}^{M}\circ\phi_{\alpha}^{-1})^{\times}(\psi_{\alpha}\omega).
\end{equation*}
\end{definition}
Standard arguments as for instance \cite[13.1.9]{die2} permit to show that the above definition does not depend
by the choice of the covering and of the partition of unity subordinate to it.
Now $\mr{Alt}^{N}(M;\K,\lambda)=\mr{Alt}^{N}(M;\K_{\R},\lambda)$ since Lemma \ref{09121429},
while $\mf{R},\mf{I}\in\mf{L}(\C_{\R},\R)$ therefore Def. \ref{09140959b} allows us to provide the following 
\begin{definition}
\label{09121339}
Let $M$ be oriented, define 
\begin{equation}
\label{09121103}
\begin{aligned}
\mr{Alt}^{N}(M;\K,\lambda)
\ni\beta&\mapsto\int\beta\coloneqq\int\mf{R}_{\times}(\beta)+i\int\mf{I}_{\times}(\beta)\in\K,
\text{ if }\K=\C;
\\
\mr{Alt}^{N}(M;\K,\lambda)\ni\omega&\mapsto\int\omega\in\K,\text{ if }\K=\R.
\end{aligned}
\end{equation}  
\end{definition}
\begin{definition}
[\textbf{Weak Integral of $G$-Valued Scalarly $\lambda$-Integrable Maximal Forms}]
\label{09281748}
Let $M$ be oriented and $\eta\in\mr{Alt}^{N}(M;G,\lambda)$. Define $\int\eta\in(G^{\prime})^{\ast}$ such that 
\begin{equation*}
\int\eta:G^{\prime}\ni\uppsi\mapsto\int\uppsi_{\times}(\eta)\in\K,
\end{equation*}
called the weak integral of $\eta$. We say that $\int\eta$ belongs to $G$ or that $\int\eta\in G$ iff there exists a
necessarily unique element $s\in G$ such that $\uppsi(s)=\int\uppsi_{\times}(\eta)$ for every $\uppsi\in G^{\prime}$, in such
a case and whenever there is no confusion we let $\int\eta$ denote also the element $s$.
\end{definition}
Clearly $\int$ is a $\R$-linear operator by considering the $\R$-module underlying the $\mc{A}(U)$-module
$\mr{Alt}^{N}(M;G,\lambda)$.
By recalling Def. \ref{10021357} a special case is as follows
\begin{proposition}
\label{08291022}
Let $M$ be oriented and assume that $G$ is quasi-complete and let $\eta\in\mr{Alt}_{0}^{N}(M;G)$, then $\int\eta\in G$
namely
\begin{equation*}
(\exists\,!b\in G)(\forall\uppsi\in G^{\prime})\left(\uppsi(b)=\int\uppsi_{\times}(\eta)\right).
\end{equation*}
\end{proposition}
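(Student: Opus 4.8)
The plan is to reduce everything, via a partition of unity and the unique decomposition on charts, to the classical statement that a compactly supported continuous function with values in a quasi-complete locally convex space has an integral in that space. Uniqueness of $b$ is not really at issue: it is already asserted in Def. \ref{09281748}, and it rests only on the Hahn--Banach fact that $G'$ separates the points of the Hausdorff space $G$. So I would concentrate on existence.

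First I would fix, exactly as in Def. \ref{09140959b}, a finite family $\{(U_{\alpha},\phi_{\alpha})\}_{\alpha\in D}$ of oriented charts of $M$ with $\{U_{\alpha}\}_{\alpha\in D}$ covering $\mr{supp}(\eta)$ (which is compact, since $\eta\in\mr{Alt}_{0}^{N}(M;G)$), together with a smooth partition of unity $\{\psi_{\alpha}\}_{\alpha\in D^{\dagger}}$ subordinate to $\{U_{\alpha}\}_{\alpha\in D^{\dagger}}$, where $D^{\dagger}=D\cup\{\dagger\}$ and $U_{\dagger}=\complement_{M}\mr{supp}(\eta)$; in particular each $\mr{supp}(\psi_{\alpha})$ is a compact subset of $U_{\alpha}$. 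Fixing $\uppsi\in G'$ and applying Def. \ref{09121339} and Def. \ref{09140959b} to $\uppsi_{\times}(\eta)\in\mr{Alt}^{N}(M;\K,\lambda)$ gives
\[
\int\uppsi_{\times}(\eta)=\sum_{\alpha\in D}\gamma_{\phi_{\alpha}}\int(\imath_{U_{\alpha}}^{M}\circ\phi_{\alpha}^{-1})^{\times}\bigl(\psi_{\alpha}\,\uppsi_{\times}(\eta)\bigr).
\]
Next I would rewrite each summand. Since $\uppsi_{\times}$ is $\mc{A}(M)$-linear and commutes with pullbacks by Thm. \ref{08281542int}, one has $(\imath_{U_{\alpha}}^{M}\circ\phi_{\alpha}^{-1})^{\times}(\psi_{\alpha}\uppsi_{\times}(\eta))=\uppsi_{\times}\bigl((\imath_{U_{\alpha}}^{M}\circ\phi_{\alpha}^{-1})^{\times}(\psi_{\alpha}\eta)\bigr)$. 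Writing $V_{\alpha}:=\phi_{\alpha}(U_{\alpha})$, the form $(\imath_{U_{\alpha}}^{M}\circ\phi_{\alpha}^{-1})^{\times}(\psi_{\alpha}\eta)$ lies in $\mc{H}(V_{\alpha},G_{\R})\otimes_{\mc{A}(V_{\alpha})}\mr{Alt}^{N}(V_{\alpha},\R^{N})$ (the relevant $\mc{H}$-components acquire compact support in $V_{\alpha}$ because $\psi_{\alpha}$ is supported in $U_{\alpha}$); since $\mr{Alt}^{N}(V_{\alpha},\R^{N})$ is free of rank one over $\mc{A}(V_{\alpha})$ with basis $\bigwedge_{i=1}^{N}(\imath_{V_{\alpha}}^{\R^{N}})^{\ast}(dx_{i})$, the unique decomposition (Cor. \ref{08262041int}, i.e. \cite[II.62 Cor.1]{BourA1}) applied inside that tensor product produces a unique compactly supported $h_{\alpha}\in\mc{H}(V_{\alpha},G_{\R})$ with $(\imath_{U_{\alpha}}^{M}\circ\phi_{\alpha}^{-1})^{\times}(\psi_{\alpha}\eta)=h_{\alpha}\otimes\bigwedge_{i=1}^{N}(\imath_{V_{\alpha}}^{\R^{N}})^{\ast}(dx_{i})$. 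Let $\w{h}_{\alpha}\in\mc{H}(\R^{N},G_{\R})$ be its extension by zero. Then $\uppsi_{\times}$ sends this form to $(\uppsi\circ h_{\alpha})\otimes\bigwedge_{i=1}^{N}(\imath_{V_{\alpha}}^{\R^{N}})^{\ast}(dx_{i})$, so by Def. \ref{09140959a} (and Def. \ref{09121339} when $\K=\C$) its integral equals $\int(\uppsi\circ\w{h}_{\alpha})\,d\lambda$, and therefore $\int\uppsi_{\times}(\eta)=\sum_{\alpha\in D}\gamma_{\phi_{\alpha}}\int(\uppsi\circ\w{h}_{\alpha})\,d\lambda$.

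The one genuinely non-formal step comes last. Each $\w{h}_{\alpha}$ is a compactly supported continuous $G$-valued function on $\R^{N}$ and $G$ is quasi-complete, so its $\lambda$-integral belongs to $G$: there is $b_{\alpha}\in G$ with $\uppsi(b_{\alpha})=\int(\uppsi\circ\w{h}_{\alpha})\,d\lambda$ for every $\uppsi\in G'$. This is the classical extension of a measure to vector-valued continuous functions with compact support in a quasi-complete space (cf. \cite{IntBourb}): the Riemann sums of $\w{h}_{\alpha}$ lie in a fixed scalar multiple of $\ov{\mr{conv}}\bigl(\w{h}_{\alpha}(\R^{N})\cup\{\ze\}\bigr)$, which is compact since $\w{h}_{\alpha}(\R^{N})$ is compact and $G$ is quasi-complete, so they converge in $G$ and the limit is the desired integral. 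Putting $b:=\sum_{\alpha\in D}\gamma_{\phi_{\alpha}}b_{\alpha}\in G$ then yields $\uppsi(b)=\sum_{\alpha\in D}\gamma_{\phi_{\alpha}}\int(\uppsi\circ\w{h}_{\alpha})\,d\lambda=\int\uppsi_{\times}(\eta)$ for every $\uppsi\in G'$, which is the assertion; uniqueness of $b$ follows because $G'$ separates the points of $G$. I expect this existence-of-integral step to be the crux — it is the only place the quasi-completeness hypothesis is used — whereas everything preceding it is the bookkeeping of the unique decomposition on charts and the naturality of $\uppsi_{\times}$.
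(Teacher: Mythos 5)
Your proposal is correct and follows essentially the same route as the paper: the paper's proof is a terse reduction to the fact that the weak integral of a compactly supported continuous $G$-valued map against a measure belongs to $G$ when $G$ is quasi-complete (citing \cite[III.38 Cor. 2]{IntBourb}), which is exactly the ``crux'' step you isolate at the end. Everything before that in your write-up is the chart/partition-of-unity bookkeeping that the paper leaves implicit, carried out correctly.
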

\begin{proof}
The statement is well set since $\mr{Alt}_{0}^{N}(M;G)$ is isomorphic to a submodule of $\mr{Alt}^{N}(M;G,\lambda)$.
The statement follows since Def. \ref{09140959a}, since $\R^{N}$ is locally compact, since the Lebesgue measure on
$\R^{N}$ is a measure, and since the weak integral of any compactly supported continuous $G$-valued map against
any measure belongs to $G$ as established in \cite[III.38 Cor. 2]{IntBourb}.
\end{proof}
\begin{definition}
Define $G^{\star}\coloneqq\lr{(G^{\prime})^{\ast}}{\sigma((G^{\prime})^{\ast},G^{\prime})}_{\R}$
\end{definition}
Now we can state the following
\begin{theorem}
[\textbf{Vectorial Measure Associated with an Integrable $G$-Valued Form}]  
\label{09171005}
Let $M$ be oriented, thus there exists a unique map 
\begin{equation*}
\mf{m}\in\mr{Mor}_{\R-\mr{mod}}\left(\mr{Alt}^{N}(M;G,\lambda),\mr{Meas}(M,G^{\star})\right);
\end{equation*}
such that 
\begin{equation*}
(\forall\eta\in\mr{Alt}^{N}(M;G,\lambda))(\forall g\in\mc{H}(M))\left(\mf{m}_{\eta}(g)=\int g\cdot\eta\right);
\end{equation*}
where $(\cdot)$ is the $\mc{A}(M)$-bilinear map constructed in Prp. \ref{09170948}.
\end{theorem}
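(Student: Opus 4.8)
The plan is to construct $\mf{m}$ via the universal property of the tensor product, reducing everything to the real-valued situation already available. First I would fix $\eta\in\mr{Alt}^{N}(M;G,\lambda)$ and observe that by Prp. \ref{09170948} the assignment $g\mapsto g\cdot\eta$ is $\mc{A}(M)$-linear from $\mc{H}(M)$ into $\mr{Alt}^{N}(M;G,\lambda)$, hence in particular $\R$-linear. Composing with the $\R$-linear weak integral $\int\colon\mr{Alt}^{N}(M;G,\lambda)\to(G^{\prime})^{\ast}$ from Def. \ref{09281748} gives an $\R$-linear map $\mf{m}_{\eta}\colon\mc{H}(M)\ni g\mapsto\int g\cdot\eta\in(G^{\prime})^{\ast}$. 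The uniqueness of the map $\mf{m}$ is immediate once it exists, since its value on every $\eta$ is prescribed by the displayed formula.

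The two things that must be checked are that each $\mf{m}_{\eta}$ is actually a vectorial measure on $M$ with values in $G^{\star}=\lr{(G^{\prime})^{\ast}}{\sigma((G^{\prime})^{\ast},G^{\prime})}_{\R}$ — i.e.\ $\R$-linear \emph{and continuous} from $\mc{H}(M)$ into $G^{\star}$ in the sense of \cite[VI.18 Def.~1]{IntBourb} — and that $\eta\mapsto\mf{m}_{\eta}$ is itself $\R$-linear into $\mr{Meas}(M,G^{\star})$. For continuity I would test against the topology of $G^{\star}$: since $G^{\star}$ carries the weak topology $\sigma((G^{\prime})^{\ast},G^{\prime})$, a map into it is continuous iff for every $\uppsi\in G^{\prime}$ the scalar map $g\mapsto\big(\mf{m}_{\eta}(g)\big)(\uppsi)=\int\uppsi_{\times}(g\cdot\eta)$ is continuous on $\mc{H}(M)$. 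Now $\uppsi_{\times}$ commutes with the $\mc{A}(M)$-module action (this is the content of the pushforward compatibilities, cf.\ the remark after Prp. \ref{09170948} together with Def. \ref{08281845int}), so $\uppsi_{\times}(g\cdot\eta)=g\cdot\uppsi_{\times}(\eta)$, and $\uppsi_{\times}(\eta)\in\mr{Alt}^{N}(M;\K,\lambda)$. Thus the scalar map in question is $g\mapsto\int g\cdot\uppsi_{\times}(\eta)$, which by Def. \ref{09121339} and Def. \ref{09140959b}, written out in a chart via Cor. \ref{08262041int}, is (a finite sum over a partition of unity of) genuine Lebesgue integrals $\int (gf_{I})\,d\lambda$ against fixed $f_{I}\in\mf{L}_{c}^{1}(\cdot,\K,\lambda)$; these are continuous in $g\in\mc{H}(M)$ because $g\mapsto gf_{I}$ is continuous $\mc{H}(M)\to\mf{L}^{1}$ on each relevant chart and integration against an $L^{1}$ density is a continuous functional on $\mc{H}$. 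Hence $\mf{m}_{\eta}\in\mr{Meas}(M,G^{\star})$.

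The $\R$-linearity of $\eta\mapsto\mf{m}_{\eta}$ follows because both $g\mapsto g\cdot\eta$ (in $\eta$, by the bilinearity in Prp. \ref{09170948}) and $\int$ are $\R$-linear, so $\mf{m}_{\eta_{1}+\eta_{2}}=\mf{m}_{\eta_{1}}+\mf{m}_{\eta_{2}}$ and $\mf{m}_{c\eta}=c\,\mf{m}_{\eta}$ for $c\in\R$, equalities which hold pointwise on $\mc{H}(M)$ and therefore as identities of measures. This gives the asserted $\mf{m}\in\mr{Mor}_{\R-\mr{mod}}(\mr{Alt}^{N}(M;G,\lambda),\mr{Meas}(M,G^{\star}))$, and uniqueness is clear. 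The main obstacle I anticipate is the continuity verification: one must be careful that the weak topology on $(G^{\prime})^{\ast}$ is exactly what makes the problem reduce to the scalar statement $\uppsi$ by $\uppsi$, and that the scalar integral $\int g\cdot\uppsi_{\times}(\eta)$ really is continuous in $g$ — this is where one unwinds Def. \ref{09121339}, Def. \ref{09140959b}, Def. \ref{09140959a} and the chart decomposition, and invokes the elementary continuity of $\mc{H}(M)\ni g\mapsto\int g h\,d\lambda$ for fixed $h\in L^{1}_{\mr{loc}}$ with the right support conditions.
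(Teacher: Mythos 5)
Your proposal is correct and follows essentially the same route as the paper: reduce continuity into $G^{\star}$ to the scalar case by testing against the evaluation functionals $\mr{b}_{\uppsi}$ for $\uppsi\in G^{\prime}$ (the paper cites the initial-topology criterion \cite[I.12 Prp. 4]{BourGT} where you invoke the defining property of the weak topology), use the identity $\int\uppsi_{\times}(g\cdot\eta)=\int g\cdot\uppsi_{\times}(\eta)$, and observe that $g\mapsto\int g\cdot\uppsi_{\times}(\eta)$ is a scalar measure. Your chart-by-chart verification of that last continuity is merely a more explicit rendering of what the paper asserts directly via \eqref{10021534}.
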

\begin{proof}
$\mf{m}$ is $\R$-linear since it is so the weak integral and since $(\cdot)$ is $\mc{A}(M)$-bilinear.
Next let $E$ denote $\lr{(G^{\prime})^{\ast}}{\sigma((G^{\prime})^{\ast},G^{\prime})}$ so $G^{\star}=E_{\R}$ and
for every $\uppsi\in G^{\prime}$ let $\mr{b}_{\uppsi}:(G^{\prime})^{\ast}\to\K$, $z\mapsto z(\uppsi)$, thus
\begin{equation}
\label{10021531}  
E^{\prime}=\{\mr{b}_{\uppsi}\}_{\uppsi\in G^{\prime}}.
\end{equation}
Let $g\in\mc{H}(M)$ and $\uppsi\in G^{\prime}$ thus
$\int\uppsi_{\times}(g\cdot\eta)=\int g\uppsi_{\times}(\eta)$ so
\begin{equation}
\label{10021534}  
\mr{b}_{\uppsi}\circ\mf{m}_{\eta}\in\mr{Meas}(M,\K);
\end{equation}
in particular $\mr{b}_{\uppsi}\circ\mf{m}_{\eta}$ is continuous. Therefore $\mf{m}_{\eta}:\mc{H}(M)\to E$ is continuous  
by \eqref{10021531}, by \eqref{10021534}, since the definition of weak topologies and since \cite[I.12 Prp. 4]{BourGT}.
Hence the statement follows since the topology on $G^{\star}$ is the topology on $E$.
\end{proof}
\begin{corollary}
\label{09141452}
Let $M$ be oriented, $\eta\in\mr{Alt}^{N}(M;G,\lambda)$,
$\{(U_{\alpha},\phi_{\alpha})\}_{\alpha\in D}$
be a \emph{finite} family of oriented charts of $M$ such that $\{U_{\alpha}\}_{\alpha\in D}$ is a covering of
$\mr{supp}(\eta)$ moreover by setting $D^{\dagger}=D\cup\{\dagger\}$ and $U_{\dagger}=\complement_{M}\mr{supp}(\eta)$,
let $\{\psi_{\alpha}\}_{\alpha\in D^{\dagger}}$ be a smooth partition of unity subordinate to $\{U_{\alpha}\}_{\alpha\in D^{\dagger}}$.
Thus 
\begin{equation*}
\int\eta=\sum_{\alpha\in D}\int\psi_{\alpha}\cdot\eta.
\end{equation*}
\end{corollary}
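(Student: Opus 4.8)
The plan is to reduce the whole statement to the single identity $\eta=\sum_{\alpha\in D}\psi_{\alpha}\cdot\eta$ inside $\mr{Alt}^{N}(M;G,\lambda)$; granting this, the corollary follows at once by applying to it the weak integral operator $\int$ of Def. \ref{09281748}, which is $\R$-linear. One preliminary remark: since the statement uses $\int\psi_{\alpha}\cdot\eta$ with the product constructed in Prp. \ref{09170948}, each $\psi_{\alpha}$ with $\alpha\in D$ is compactly supported (which may always be assumed, $\mr{supp}(\eta)$ being compact and covered by the finite family $\{U_{\alpha}\}_{\alpha\in D}$), and for a compactly supported smooth function $g$ the relation $g\cdot(f\otimes T)=(gf)\otimes T$ shows that $g\cdot(-)$ coincides on $\mr{Alt}^{N}(M;G,\lambda)$ with the action of $g$ coming from the $\mc{A}(M)$-module structure; I would work with the latter throughout.

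For the identity itself I would exploit that $M$ is \emph{oriented}: then $\mr{Alt}^{N}(TM)$ is a trivial line bundle, hence $\mr{Alt}^{N}(M,M)$ is a free $\mc{A}(M)$-module of rank one, generated by some $\mu$ corresponding under $\mf{r}$ to a nowhere-vanishing smooth $N$-form $\nu$. Consequently, by the argument underlying Cor. \ref{08262041int} but now carried out over the whole of $M$ (i.e. \cite[II.62 Cor.1]{BourA1}), every $\eta\in\mr{Alt}^{N}(M;G,\lambda)$ is uniquely of the form $f_{\eta}\otimes\mu$ with $f_{\eta}\in\mf{L}_{c}^{1}(M,G,\lambda)$; and since $\mf{s}(\mf{t}(\eta))$ is the section $p\mapsto f_{\eta}(p)\otimes\nu(p)$ with $\nu$ nowhere vanishing, Def. \ref{09161456} gives $\mr{supp}(\eta)=\mr{supp}(f_{\eta})$. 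Because $\{\psi_{\alpha}\}_{\alpha\in D^{\dagger}}$ is a partition of unity we have $\sum_{\alpha\in D}\psi_{\alpha}=\un_{M}-\psi_{\dagger}$ in $\mc{A}(M)$, so $\sum_{\alpha\in D}\psi_{\alpha}\cdot\eta=\eta-\psi_{\dagger}\cdot\eta$; and since $\mr{supp}(\psi_{\dagger})\subseteq U_{\dagger}=\complement_{M}\mr{supp}(\eta)=\complement_{M}\mr{supp}(f_{\eta})$, the function $\psi_{\dagger}$ vanishes on $\mr{supp}(f_{\eta})$, whence $\psi_{\dagger}f_{\eta}=0$ and $\psi_{\dagger}\cdot\eta=(\psi_{\dagger}f_{\eta})\otimes\mu=0$.

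Therefore $\sum_{\alpha\in D}\psi_{\alpha}\cdot\eta=\eta$, and applying the $\R$-linear operator $\int$ finishes the proof. I do not foresee a genuine obstacle: the reasoning is essentially formal, the only point needing a moment's care being the passage from the chart-wise decomposition of Cor. \ref{08262041int} to a global one, which is legitimate precisely because orientability of $M$ makes $\mr{Alt}^{N}(M,M)$ free of rank one; the remaining ingredients --- the agreement of the two module actions and the identity $\mr{supp}(\eta)=\mr{supp}(f_{\eta})$ via Def. \ref{09161456} --- are immediate.
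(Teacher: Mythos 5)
Your proof is correct, and its skeleton is the same as the paper's: both reduce the claim to the identity $\eta=\bigl(\sum_{\alpha\in D}\psi_{\alpha}\bigr)\cdot\eta$ and then conclude by linearity. The two differ in how each half is handled. For the linearity you use the $\R$-linearity of the weak integral together with the bilinearity of $(\cdot)$, whereas the paper routes it through Thm. \ref{09171005}, writing $\int\eta=\mf{m}_{\eta}(g)$ with $g=\sum_{\alpha\in D}\psi_{\alpha}\in\mc{H}(M)$ and using that $\mf{m}_{\eta}$ is a vectorial measure; these are interchangeable. For the identity itself the paper merely observes that $g$ equals $\un_{M}$ on $\mr{supp}(\eta)$ and asserts $\eta=g\cdot\eta$, while you actually prove it: orientability makes $\mr{Alt}^{N}(M,M)$ a free $\mc{A}(M)$-module of rank one, so the unique decomposition of \cite[II.62 Cor.1]{BourA1} applies globally, $\eta=f_{\eta}\otimes\mu$; the generator being nowhere vanishing gives $\mr{supp}(\eta)=\mr{supp}(f_{\eta})$ via Def. \ref{09161456}; and then $\psi_{\dagger}f_{\eta}=0$ kills the complementary term. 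This global trivialization argument is the one genuinely new ingredient relative to the paper and it is sound. Your preliminary remark that the $\psi_{\alpha}$, $\alpha\in D$, may be taken compactly supported (so that $(\cdot)$ of Prp. \ref{09170948} agrees with the $\mc{A}(M)$-module action) is in fact also needed by the paper's own proof, which asserts $g\in\mc{H}(M)$ without comment.
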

\begin{proof}
Since $D$ is finite we can define in $\mc{A}(M)$ the map $g=\sum_{\alpha\in D}\psi_{\alpha}$, in particular $g\in\mc{H}(M)$, 
while $g\circ\imath_{\mr{supp}(\eta)}^{M}=\un_{\mr{supp}(\eta)}$ since
$\psi_{\dagger}\circ\imath_{\mr{supp}(\eta)}^{M}=\ze_{\mr{supp}(\eta)}$ and since $g+\psi_{\dagger}=\un_{M}$ by definition of
partition of unity. Therefore $\eta=g\cdot\eta$, then $\int\eta=\mf{m}_{\eta}(g)=\sum_{\alpha\in D}\mf{m}_{\eta}(\psi_{\alpha})$
where the second equality follows since Thm. \ref{09171005}.
\end{proof}  
Finally we can establish the following 
\begin{theorem}
[\textbf{Stokes Theorem for $G$-Valued Smooth Forms}]
\label{08281926}
Let $M$ be oriented and with boundary and $\theta\in\mr{Alt}_{c}^{N-1}(M;G)$, thus
\begin{equation*}
\int d\theta=\int(\imath_{\partial M}^{M})^{\times}(\theta);
\end{equation*}
furthermore if $G$ is quasi-complete, then the above integrals belong to $G$.
Here if $\partial M=\emptyset$, then the right-hand side of the equality has to be understood equal to $\ze$.
\end{theorem}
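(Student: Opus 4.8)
The plan is to reduce the statement to the classical Stokes theorem for $\R$-valued compactly supported smooth forms by pushing everything forward along the functionals $\uppsi\in G^{\prime}$. By Def. \ref{09281748} both $\int d\theta$ and $\int(\imath_{\partial M}^{M})^{\times}(\theta)$ are a priori elements of $(G^{\prime})^{\ast}$, and since $G$ is Hausdorff locally convex its topological dual $G^{\prime}$ separates the points of $G$; hence the two functionals coincide as soon as they agree at every $\uppsi\in G^{\prime}$. Fix $\uppsi\in G^{\prime}$. Using Def. \ref{09281748} and then Thm. \ref{08281542} (pushforward commutes with $d$ and with $(\imath_{\partial M}^{M})^{\times}$) we obtain
\[
\left(\int d\theta\right)(\uppsi)=\int\uppsi_{\times}(d\theta)=\int d(\uppsi_{\times}\theta),\qquad
\left(\int(\imath_{\partial M}^{M})^{\times}\theta\right)(\uppsi)=\int(\imath_{\partial M}^{M})^{\times}(\uppsi_{\times}\theta),
\]
where $\uppsi_{\times}\theta\in\mr{Alt}_{c}^{N-1}(M;\K)$ because $\uppsi_{\ast}$ preserves compact support. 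So everything is reduced to the scalar identity $\int d\omega=\int(\imath_{\partial M}^{M})^{\times}\omega$ for $\omega\in\mr{Alt}_{c}^{N-1}(M;\K)$.

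Next I would dispose of $\K=\C$: by Def. \ref{09121339} we have $\int=\int\mf{R}_{\times}(\cdot)+i\int\mf{I}_{\times}(\cdot)$, and since $\mf{R},\mf{I}\in\mc{L}(\C_{\R},\R)$ commute with $d$ and with $(\imath_{\partial M}^{M})^{\times}$ by Thm. \ref{08281542} (applied with $\K=\R$, with $G$ replaced by $\C_{\R}$ and $H=\R$), the complex case follows from the real case applied to $\mf{R}_{\times}\omega$ and $\mf{I}_{\times}\omega$. It then remains to prove $\int d\omega=\int(\imath_{\partial M}^{M})^{\times}\omega$ for $\omega\in\mr{Alt}_{c}^{N-1}(M;\R)$. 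Here I would invoke the identification, furnished by $\mf{r}_{\R}$ together with the unique decomposition of Cor. \ref{08262041}, of the abstract module $\mr{Alt}_{c}^{N-1}(M;\R)$ with the classical space of compactly supported smooth $(N-1)$-forms on $M$; under this identification $d$ from Thm. \ref{08281401} is the classical exterior derivative (on a chart Def. \ref{08281540} is literally the coordinate formula for $d$, and the gluing property in Thm. \ref{08281401} pins $d$ down uniquely), $(\imath_{\partial M}^{M})^{\times}$ is the classical pullback, and the integral of Def. \ref{09140959b} (hence of Def. \ref{09121339} for $\K=\R$) is the classical integral of a maximal form on an oriented manifold. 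Consequently the desired identity is exactly the classical Stokes theorem; when $\partial M=\emptyset$ it reads $\int d\omega=0$, matching the stated convention.

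For the assertion that the integrals lie in $G$ when $G$ is quasi-complete, note that $d\theta\in\mr{Alt}_{c}^{N}(M;G)$ and $(\imath_{\partial M}^{M})^{\times}\theta\in\mr{Alt}_{c}^{N-1}(\partial M;G)$, and since a compactly supported smooth map is a compactly supported continuous map we have $\mc{A}_{c}(M,G)\subseteq\mc{H}(M,G_{\R})$, so (using flatness of $\mr{Alt}^{\bullet}$ over $\mc{A}(M)$, already implicit in the identifications made after Def. \ref{10021357}) $\mr{Alt}_{c}^{N}(M;G)\subseteq\mr{Alt}_{0}^{N}(M;G,\lambda)$, and likewise over $\partial M$. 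Therefore Prp. \ref{08291022} applied to $M$, respectively to $\partial M$ (with $N$ replaced by $N-1$), shows that both weak integrals belong to $G$; since $G$ embeds into $(G^{\prime})^{\ast}$ by evaluation, the equality already established in $(G^{\prime})^{\ast}$ descends to $G$.

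The only point that is not a pure diagram chase is the scalar real identification used above — that the abstractly defined $d$, pullback and integral really are the textbook ones — after which everything reduces to Thm. \ref{08281542}, Def. \ref{09281748} and Def. \ref{09121339}. A secondary point requiring a line of care is the dimension bookkeeping at the boundary (that a compactly supported $(N-1)$-form on $M$ restricts to a maximal form on the $(N-1)$-manifold $\partial M$) together with the degenerate case $N=1$, where $\partial M$ is $0$-dimensional and the standing hypothesis $\dim\neq 0$ fails, so that case must be read off directly from the elementary $0$-dimensional statement.
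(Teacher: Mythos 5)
Your proposal is correct and follows essentially the same route as the paper's proof: fix $\uppsi\in G^{\prime}$, use Thm.~\ref{08281542} to commute the pushforward with $d$ and with $(\imath_{\partial M}^{M})^{\times}$, reduce to the classical Stokes theorem via Def.~\ref{09121339} (splitting into real and imaginary parts when $\K=\C$), and invoke Prp.~\ref{08291022} for the quasi-complete case. Your added remarks on identifying the abstract $d$ with the classical exterior derivative and on the boundary dimension bookkeeping are elaborations of steps the paper leaves implicit, not a different argument.
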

\begin{proof}
The statement is well set since $\mr{Alt}_{c}^{\bullet}(M;G)$ is isomorphic to a submodule of
$\mr{Alt}^{\bullet}(M;G,\lambda)$. 
Let $\uppsi\in G^{\prime}$, thus $\uppsi_{\times}(d\theta)=d(\uppsi_{\times}(\theta))$ and
$\uppsi_{\times}(\imath_{\partial M}^{M})^{\times}\theta=(\imath_{\partial M}^{M})^{\times}\uppsi_{\times}\theta$
since Thm. \ref{08281542}. Henceforth the equality follows by \eqref{09121103}, by Stokes theorem, and in case $\K=\C$
also by $\mr{Alt}_{c}^{\bullet}(M;\C,\lambda)=\mr{Alt}_{c}^{\bullet}(M;\C_{\R},\lambda)$ since  Rmk. \ref{10041224},
and by Cor. \ref{09121123} applied to the projectors $\mf{R},\mf{I}\in\mf{L}(\C_{\R},\R)$.
The last sentence of the statement follows since Prp. \ref{08291022}.
\end{proof}

\end{document}